\documentclass[a4paper,10pt]{article}
\usepackage{graphicx}        
\usepackage{multicol}        
\usepackage[bottom]{footmisc}

\usepackage{epsfig}
\usepackage{amsfonts,amsthm}
\usepackage{authblk}
\usepackage{amssymb}
\usepackage{graphicx}
\usepackage{float}
\usepackage{diagbox}
\usepackage[applemac]{inputenc}
\usepackage[T1]{fontenc}
\usepackage{blkarray}
\usepackage{color}
\usepackage{amsmath,bm}
\usepackage{mathabx}
\usepackage{mathtools}
\usepackage{nicefrac}
\usepackage[hmargin={15mm,15mm},vmargin={17mm,22mm}]{geometry}
\usepackage{amssymb}
\usepackage{array}
\usepackage{multicol}
\usepackage{pdfpages}
\usepackage{subfig}
\usepackage{multirow}
\usepackage{enumitem}
\usepackage{bbold}
\usepackage{xparse}								
\usepackage{stmaryrd}							
\usepackage{sidecap}								
\usepackage{upgreek}
\usepackage{accents}
\allowdisplaybreaks
\usepackage{tcolorbox}
\usepackage{algorithmic}
\usepackage[ocgcolorlinks,allcolors={blue}]{hyperref}


\usepackage[bbgreekl]{mathbbol}
\usepackage{amsfonts}

\DeclareSymbolFontAlphabet{\mathbb}{AMSb}
\DeclareSymbolFontAlphabet{\mathbbl}{bbold}

\usepackage{bbm}

\def\N{\mathbb N}
\def\R{\mathbb R}

\def\P{\mathbb P}

\def\D{\mathcal{D}}
\def\S{\mathcal{S}}

\def\<{\langle}
\def\>{\rangle}

\def\lsim{\lesssim}

\def\Chi{\raise .3ex \hbox{\large $\chi$}}

\def\[{\Bigl [}
\def\]{\Bigr ]}
\def\({\Bigl (}
\def\){\Bigr )}
\def\l{\iota}

\def\dsp{\displaystyle}
\def\x{{\bf x}}

\def\n{{\bf n}}

\def\U{{\bf U}}

\def\q{{\bf q}}
\def\r{{\bf r}}
\def\g{{\mathbf{g}}}

\def\x{{\bf x}}
\def\dsp{{\displaystyle x}}
\def\d{{\rm d}}
\def\div{{\rm div}}
\def\a{\alpha}
\def\q{\mathbf{q}}
\def\K{\mathbb{K}}
\def\n{\mathbf{n}}
\def\dsp{\displaystyle}
\def\bu{\mathbf{u}}
\def\bv{\mathbf{v}}

\def\O{\Omega}
\def\G{\Gamma}

\def\bbsig{\bbsigma}
\def\bbeps{\bbespilon}
\newcommand{\jump}[1]{\llbracket #1 \rrbracket}

\newcommand{\dtn}{\delta t^{n+\frac{1}{2}}}
\newcommand{\dtnmun}{\delta t^{n-\frac{1}{2}}}
\newcommand{\dtnmdeux}{\delta t^{n-\frac{3}{2}}}

\newcommand{\dtzero}{\delta t^{\frac{1}{2}}}

\newcommand{\nf}{\nicefrac}


\newcommand{\email}[1]{\href{mailto:#1}{#1}}

\begingroup
    \makeatletter
    \@for\theoremstyle:=definition,remark,plain\do{%
        \expandafter\g@addto@macro\csname th@\theoremstyle\endcsname{%
            \addtolength\thm@preskip\parskip
            }%
        }
\endgroup

\newtheorem{theorem}{Theorem}
\numberwithin{theorem}{section}
\newtheorem{proposition}[theorem]{Proposition}
\newtheorem{lemma}[theorem]{Lemma}

\theoremstyle{remark}
\newtheorem{remark}[theorem]{Remark}
\theoremstyle{definition}

\newtheorem{definition}[theorem]{Definition}

\newenvironment{acknowledgements}
      {\bigskip\bigskip~\newline\textbf{Acknowledgements}}

\def\g{{\rm nw}}
\def\l{{\rm w}}
\def\rt{{\rm rt}}

\def\trace{\gamma}
\def\grad{\nabla}
\def\del{\partial}
\def\div{{\rm div}}

\def\weakto{\rightharpoonup}

%


\definecolor{labelkey}{rgb}{0.6,0,1}

\usepackage[normalem]{ulem}
 \normalem
 \newcounter{corr}
 \definecolor{violet}{rgb}{0.580,0.,0.827}
 \newcommand{\corr}[3]{\typeout{Warning : a correction remains in page
 \thepage}
 				\stepcounter{corr}        
 				{\color{blue}\ifmmode\text{\,\sout{\ensuremath{#1}}\,}\else\sout{#1}\fi}
         {\color{red}#2}
         {\color{violet} \ifmmode\text{#3}\else #3\fi }}

\usepackage{parskip}
\usepackage{algorithmic}



\usepackage{diagbox}
\usepackage{booktabs}

\makeatletter
\def\thm@space@setup{%
  \thm@preskip=\parskip \thm@postskip=0pt
}
\makeatother





\begin{document}
\title{Gradient discretization of two-phase flows coupled with mechanical deformation in fractured porous media
}
\author[1]{{Francesco Bonaldi}\footnote{\email{francesco.bonaldi@univ-cotedazur.fr}}}
\author[1]{{Konstantin Brenner}\footnote{\email{konstantin.brenner@univ-cotedazur.fr}}}
\author[2]{{J\'er\^ome Droniou}\footnote{Corresponding author, \email{jerome.droniou@monash.edu}}}
\author[1]{{Roland Masson}\footnote{\email{roland.masson@univ-cotedazur.fr}}}
\affil[1]{Universit\'e C\^ote d'Azur, Inria, CNRS, Laboratoire J.A. Dieudonn\'e, team Coffee, France}%
\affil[2]{School of Mathematics, Monash University, Victoria 3800, Australia}%
\date{}
\maketitle
\begin{abstract}
\noindent
We consider a two-phase Darcy flow in a fractured porous medium consisting in a matrix flow coupled with a tangential flow in the fractures, described as a network of planar surfaces. This flow model is also coupled with the mechanical deformation of the matrix assuming that the fractures are open and filled by the fluids, as well as small deformations and a linear elastic constitutive law. The model is discretized using the gradient discretization method \cite{gdm}, which covers a large class of conforming and non conforming schemes. This framework allows for a generic convergence analysis of the coupled model using a combination of discrete functional tools. Here, we describe the model together with its numerical discretization and, using discrete compactness techniques, we prove a convergence result (up to a subsequence) assuming the non-degeneracy of the phase mobilities and that the discrete solutions remain physical in the sense that, roughly speaking, the porosity does not vanish and the fractures remain open. This is, to our knowledge, the first convergence result for this type of model taking into account two-phase flows in fractured porous media and the non-linear poromechanical coupling. Previous related works consider a linear approximation obtained for a single phase flow by freezing the fracture conductivity \cite{GWGM2015,hanowski2018}. Numerical tests employing the Two-Point Flux Approximation (TPFA) finite volume scheme for the flows and $\P_2$ finite elements for the mechanical deformation are also provided to illustrate the behavior of the solution to the model.
\bigskip \\
\textbf{MSC2010:} 65M12, 76S05, 74B10\medskip\\
\textbf{Keywords:} poromechanics, discrete fracture matrix models, two-phase Darcy flows, Gradient Discretization Method, convergence analysis
\end{abstract}
%

\section{Introduction}

Many real-life applications in geosciences involve processes like multi-phase flow and hydromechanical coupling in heterogeneous porous media. Such mathematical models are coupled systems of partial differential equations, including non-linear and degenerate parabolic ones. Besides the inherent difficulties posed by such equations, further complexities stem from the heterogeneity of the medium and the presence of discontinuities like fractures. This has a strong impact on the complexity of the models, challenging their mathematical and numerical analysis and the development of efficient simulation tools.

This work focuses on the so called hybrid-dimensional matrix fracture models obtained by averaging both the unknowns and the equations across the fracture width and by imposing appropriate transmission conditions at the matrix fracture interfaces.
Given the high geometrical complexity of real-life fracture networks, the main advantages of these hybrid-dimensional compared to full-dimensional models are to facilitate the mesh generation and the discretization of the model, and to reduce the computational cost of the resulting schemes. 
This type of hybrid-dimensional models has been the object of intensive researches over the last  twenty years due  to the ubiquity of fractures in geology and their large impact on flow, transport and mechanical behavior of rocks.
For the derivation and analysis of such models, let us refer to \cite{MAE02,FNFM03,KDA04,MJE05,ABH09,BGGLM16,BHMS2016,NBFK2019} for single-phase Darcy flows,  \cite{BMTA03,RJBH06,MF07,Jaffre11,BGGM2015,DHM16,BHMS2018,gem.aghili} for two-phase Darcy flows, and  \cite{MK2013,KTJ2013,JJ14,GWGM2015,hanowski2018,GKT16,JZ17,GFSZ17,UKBN2018} for poroelastic models.

In this article, we consider the two-phase Darcy flow in a network of pre-existing fractures represented as
$(d-1)$-dimensional planar surfaces coupled with the surrounding $d$-dimensional matrix.  The fractures are assumed to be open and filled by the fluids.
Both phase pressures are assumed continuous across the fractures. This is a classical assumption for open fractures given the low pressure drop in the width of the fractures \cite{BMTA03,RJBH06,MF07,BGGM2015}.
For single-phase flows, Poiseuille's law is classically used to model the flow along the fractures. This leads to a Darcy-like tangential flow with conductivity equal to ${d_f^3 \over 12}$, where $d_f$ is the fracture aperture \cite{GWGM2015,hanowski2018}. Following \cite{MK2013}, the extension to a two-phase flow is based on the generalized Darcy laws involving appropriate relative permeabilities and the capillary pressure-saturation relation.
This hybrid-dimensional two-phase Darcy flow model is coupled with the matrix mechanical deformation assuming small strains and a linear poroelastic behavior \cite{MK2013,KTJ2013,JJ14}.  The extension of the single-phase poromechanical coupling \cite{GWGM2015,hanowski2018,GKT16,JZ17,UKBN2018} to two-phase Darcy flows is based on the so-called equivalent pressure used both in the matrix for the total stress
and at both sides of the fractures as boundary condition for the mechanics. Typically, the equivalent pressure is defined as a convex combination of the phase pressures and several different combinations have been proposed in the literature \cite{NL08}. Our choice of the equivalent pressure follows the pioneer monograph by Coussy~\cite{coussy}  and involves the capillary energy which, as already noticed in \cite{KTJ2013,JJ14}, plays a key role to obtain energy estimates for the coupled system. From the open fracture assumption, the fracture mechanical behavior reduces to the continuity of the normal stresses at both sides of the fracture matching with the fracture equivalent pressure times the unit normal vector. To our best knowledge, no theoretical or numerical analysis of the complete poromechanical model, with all non-linear coupling, has been carried out so far.

In this work, the hybrid-dimensional coupled model is discretized using the gradient discretization method (GDM) \cite{gdm}.
This framework is based on abstract vector spaces of discrete unknowns combined with reconstruction operators.
The gradient scheme is then obtained by substitution of the continuous operators by their discrete counterparts in the weak formulation of the coupled model. The main asset of this framework is to enable a generic convergence analysis based on general properties of the discrete operators  that hold for a large class of conforming and non conforming discretizations. 
Two essential ingredients to discretize the coupled model are the discretizations of the hybrid-dimensional two-phase Darcy flow  and the discretization of the mechanics. Let us briefly mention, in both cases, a few families of discretizations typically satisfying the gradient discretization properties. 
For the discretization of the Darcy flow, the gradient discretization framework typically covers the case of cell-centered finite volume schemes with Two-Point Flux Approximation on strongly admissible meshes \cite{KDA04,ABH09,gem.aghili}, or some symmetric Multi-Point Flux Approximations \cite{TFGCH12,SBN12,AELHP153D} on tetrahedral or hexahedral meshes. It also accounts for the families of Mixed Hybrid Mimetic and Mixed or Mixed Hybrid Finite Element discretizations such as in \cite{MAE02,MJE05,BGGLM16,BHMS2016,AFSVV16}. The case of vertex-based discretizations such as Control Volume Finite Element approaches (i.e.~conforming finite element with mass lumping) \cite{BMTA03,RJBH06,MF07} or the Vertex Approximate Gradient scheme \cite{BGGLM16,BHMS2016,BGGM2015,DHM16,BHMS2018} is also accounted for.
For the discretization of the elastic mechanical model, the gradient discretization framework covers conforming finite element methods such as in \cite{GWGM2015}, as well as the Crouzeix-Raviart discretization ~\cite{hansbo-larson,dipietro-lemaire}, Discontinuous Galerkin methods~\cite{gdm-dg}, the Hybrid High Order discretization ~\cite{dipietro-ern}, and the Virtual Element Method~\cite{beirao-brezzi-marini}. Note that many of these methods are actually applicable to both the flow and the mechanical component of the model.

Without taking into account the poromechanical coupling, convergence results have been obtained in \cite{ABH09,MAE02,MJE05,BGGLM16,BHMS2016} for hybrid-dimensional single-phase Darcy flow models, and in \cite{BGGM2015,DHM16} for hybrid-dimensional two-phase Darcy flow models. The well-posedness and convergence analysis of single-phase poromechanical models is studied in \cite{GWGM2015,hanowski2018}. Nevertheless those analyses consider a linear approximation of the coupled model obtained by freezing the fracture conductivity ${d_f^3 \over 12}$, and hence eliminating the non-linear coupling between the fracture aperture and the Darcy flow. Let us also mention the related recent work \cite{both2019global} on unsaturated poroelasticity based on the Richards approximation of the two-phase flow model, using partial linearizations, non-degeneracy conditions and Kirchhoff transformation (which is made possible by assuming that the saturation--capillary pressure law is uniform across the domain). Note that fractures are not considered in this work.

Our main result is the proof of convergence, in the GDM setting, of the approximate solutions to the weak solution of the non-linear coupled model with two-phase flows. To our best knowledge, this is the first convergence result for this type of hybrid-dimensional model taking into account the full non-linear poromechanical coupling. Since it is based on discrete compactness techniques, the convergence is that of a subsequence of approximate solutions (precisely, we prove that sequences of approximate solutions are compact, and that any of their limit points is a weak solution of the continuous model). To establish this result, we make the following main assumptions. It is first assumed that the approximate matrix porosity remains bounded below by a strictly positive constant and that the approximate fracture aperture remains larger than some given aperture vanishing only at the tips.  Let us point out that these assumptions are due to the limitations of the model itself rather than to the shortcomings of the numerical analysis. They cannot be avoided since the continuous model does not ensure the positivity of the porosity nor of the fracture aperture, properties needed to guarantee existence of solutions. We note that previous works on similar models circumvent these limitations by linearization processes (complete or partial freezing of the matrix porosity and fracture apertures). Regarding the assumption on the fracture aperture, it could possibly be overcome by introducing contact mechanics in the model \cite{GKT16,Runar19}. This direction will be investigated in a future work. 
It is also assumed in the numerical analysis that the mobility functions are bounded below by strictly positive constants. Independently of the poromechanical coupling, this is a classical assumption to enable the stability and convergence analysis of two-phase Darcy flows with spatial discontinuity of the capillary pressure functions, as it is always the case in the presence of fractures  (see \cite{EGHM13,BGGM2015,DHM16}). To our knowledge, the only convergence analyses covering both the degeneracy of the mobilities and discontinuous capillary pressures are limited to Two-Point Flux Approximations (see \cite{brenner2013finite,quenjel2020}). Extending such analyses, even considering only the TPFA method for the flow, to the poromechanical model considered here is far from straightforward and seems to bring additional challenges; given that our analysis is already quite technical, we postpone this extension to degenerate mobility functions to a future work.

The rest of the article is organized as follows. Section~\ref{sec:modeleCont} introduces the continuous hybrid-dimensional coupled model. Section~\ref{sec:gradientscheme} describes the gradient discretization method for the coupled model including the definition of the reconstruction operators, the discrete variational formulation and the properties of the gradient discretization needed for the subsequent convergence analysis.
Section~\ref{sec:convergence} proceeds with the convergence analysis. The a priori estimates are established in Subsection~\ref{subsec:apriori}, the compactness properties in Subsection~\ref{subsec:compactness} and the convergence to a weak solution is proved in Subsection~\ref{subsec:convergence}. 
In Section~\ref{sec:numerical.example}, numerical experiments based on the Two-Point Flux Approximation finite volume scheme for the flows and second-order finite elements for the mechanical deformation are carried out for a cross-shaped fracture network in a two-dimensional porous medium, and illustrate the numerical convergence of the solution.  
Appendices~\ref{appendix.a1} and~\ref{appendix.a2} state some technical results used in the convergence analysis.

\section{Continuous model}\label{sec:modeleCont}

We consider a bounded polytopal domain $\Omega$ of $\R^d$, $d\in\{2,3\}$, partitioned
into a fracture domain $\Gamma$ and a matrix domain $\O\backslash\overline\G$.
The network of fractures is defined by 
$$
\overline \Gamma = \bigcup_{i\in I} \overline \Gamma_i
$$  
where each fracture $\Gamma_i\subset \Omega$, $i\in I$ is a planar polygonal simply connected open domain with angles strictly lower than $2\pi$. Without restriction of generality, we will assume that the fractures may intersect exclusively at their boundaries, that is for any $i,j \in I, i\neq j$ one has $\Gamma_i\cap  \Gamma_j = \emptyset$, but not necessarily $\overline{\Gamma}_i\cap \overline{\Gamma}_j = \emptyset$. 
Since one can split  a general (non-simply connected) planar polygon into several simply connected pieces intersecting only at their boundaries (see Figure \ref{fig_network}) our assumptions on the fracture network are in fact quite general. Roughly speaking we only exclude the non-planar fractures.

Since the fractures are assumed open with no contact, we also have to assume in the following that the boundary of each connected component of $\Omega\setminus\Gamma$ has a non zero measure intersection with $\partial\Omega$.

\begin{figure}
\begin{center}
\includegraphics[width=0.35\textwidth]{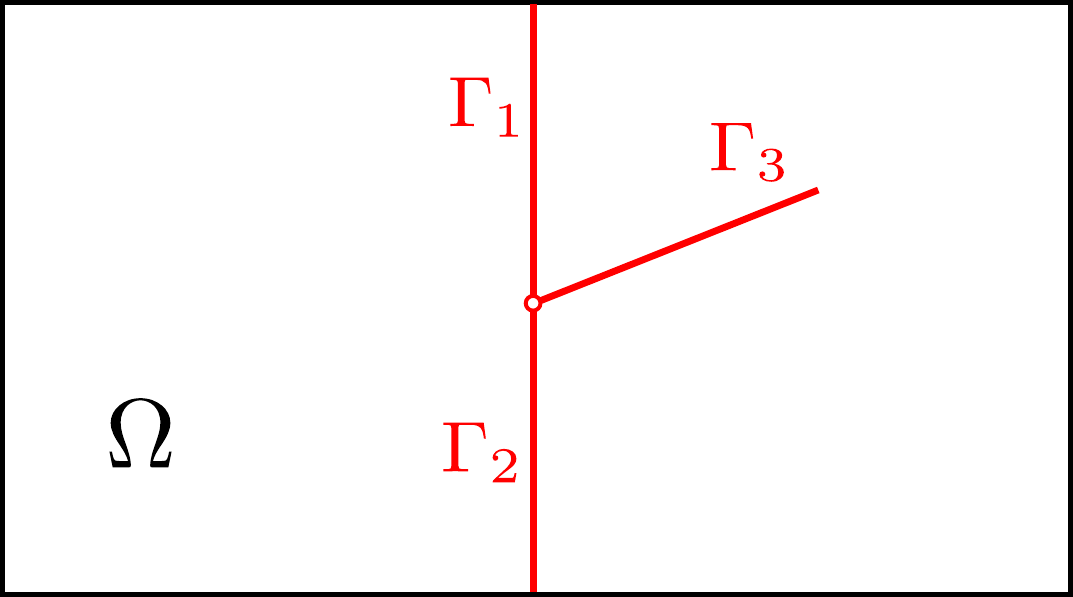}   
\caption{Example of a 2D domain $\Omega$ with three intersecting fractures $\Gamma_i$, $i\in\{1,2,3\}$.}
\label{fig_network}
\end{center}
\end{figure}

The two sides of a given fracture of $\Gamma$ are denoted by $\pm$ in the matrix domain, with unit normal vectors $\n^\pm$ oriented outward of the sides $\pm$. We denote by $\gamma$ the trace operator on $\Gamma$ for functions in $H^1(\Omega)$, by $\gamma_{\del\Omega}$ the trace operator for the same functions on $\del\O$, and by $\jump{\cdot}$ the normal trace jump operator on $\Gamma$ for functions in $H_\div(\O\backslash\overline\Gamma)$, defined by
$$
\jump{\bar\bu} = \bar\bu^+ \cdot \n^+ + \bar\bu^- \cdot \n^-  \, \mbox{ for all } \, \bar\bu \in H_\div(\O\backslash\overline\Gamma). 
$$
We denote by $\nabla_\tau$ the tangential gradient and by $\div_\tau$ the tangential divergence on the fracture network $\Gamma$. The symmetric gradient operator $\bbeps$ is defined such that $\bbeps(\bar\bv) = {1\over 2} (\nabla \bar\bv +^t\!(\nabla \bar\bv))$ for a given vector field $\bar\bv\in H^1(\O\backslash\overline\Gamma)^d$.

The fracture aperture, denoted by $\bar d_f$, is defined by $\bar d_f = - \jump{\bar \bu}$ for a displacement field $\bar\bu \in H^1(\O\backslash\overline\Gamma)^d$. 

Let us fix a continuous function $d_0: \Gamma \to (0,+\infty)$  vanishing at
$\partial \Gamma \setminus (\partial\Gamma\cap \partial\Omega)$ (i.e.~at the tips of $\Gamma$) and taking strictly positive values at $\partial\Gamma\cap \partial\Omega$.
The discrete fracture aperture will be assumed to be greater than or equal to $d_0$ almost everywhere (by the established convergence result, the same will hold for its limit). We note that the assumptions on $d_0$ are minimal, allowing for very general behavior of the fracture aperture at the tips.

Let us introduce some relevant function spaces:  
\begin{equation}\label{displacement}
\U_0 =\{ \bar \bv\in (H^1(\O\backslash\overline\Gamma))^d \mid \trace_{\del\O} \bar \bv = 0\}
\end{equation}
for the displacement vector, and
\begin{equation}\label{pressures}
V_0 = \{\bar v\in H^1_0(\O) \mid \gamma \bar v \in H^1_{d_0}(\G)\}
\end{equation}
for each phase pressure, where the space $H_{d_0}^1(\Gamma)$ is made of functions $v_\Gamma$ in $L^2(\Gamma)$,
such that $d_0^{\nf 3 2} \nabla_\tau v_\Gamma$ is in  $L^2(\Gamma)^{d-1}$, 
and whose traces are continuous at fracture intersections $\del\G_i\cap
\del\G_j$, $(i,j)\in I\times I$ ($i\neq j$) and vanish on the boundary $\partial \Gamma\cap \partial\Omega$.

The matrix and fracture rock types are denoted by the indices ${\rm rt} = m$ and ${\rm rt} = f$, {respectively}, and the non-wetting and wetting phases by the superscripts $\alpha =\g$ and $\alpha=\l$, {respectively}. 
Each rock type ${\rm rt} \in \left\{ m, f \right\}$ is characterized by its own set of mobility functions $\left( \eta^\alpha_{\rm rt}\right)_{\alpha \in \{ \g,\l\}}$ and capillary pressure-saturation relation $\left( S^\alpha_{\rm rt} \right)_{\alpha \in\{ \g,\l\}}$.

\begin{figure}
  \begin{center}
	\includegraphics[width=6cm]{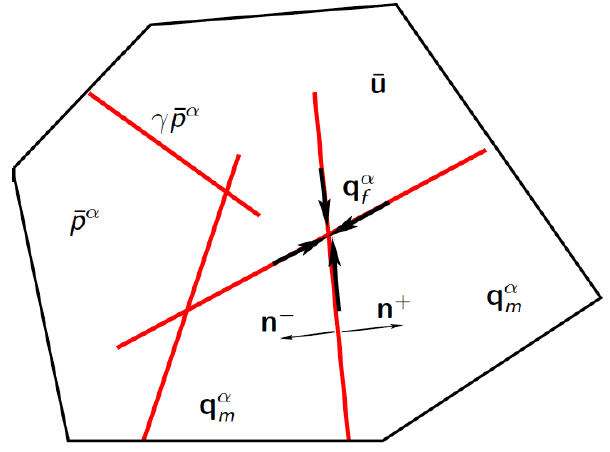}
	\caption{Example of a 2D domain $\Omega$  with its fracture network $\Gamma$, the unit normal vectors $\n^\pm$ to $\Gamma$, the phase pressures $\bar p^\alpha$ in the matrix and $\gamma\bar p^\alpha$ in the fracture network, the displacement vector {field} $\bar \bu$, the matrix Darcy velocities $\q^\alpha_m$ and the fracture tangential Darcy velocities $\q^\alpha_f$ integrated along the fracture width. }
        \end{center}
        \end{figure}

The PDEs model reads: find the phase pressures $\bar p^\alpha$, $\alpha\in\{\g,\l\}$, and the displacement vector {field} $\bar \bu$, both satisfying homogeneous Dirichlet boundary conditions on $\partial\Omega$, such that $\bar p_c = \bar p^\g - \bar p^\l$ and, for $\alpha\in\{\g,\l\}$, 
\begin{equation}
\label{eq_edp_hydromeca} 
\left\{
\begin{array}{lll}
&\partial_t\left(  \bar\phi_m S^{\a}_m(\bar p_c) \right) + \div \left(  \q^\a_m \right) = h_m^\a & \mbox{ on } (0,T)\times \Omega\setminus\overline\Gamma,\\[1ex]
& \q^\a_m = \dsp - \eta^\a_m ( S^{\a}_m(\bar p_c)) \K_m  \nabla \bar p^\a & \mbox{ on } (0,T)\times \Omega\setminus\overline\Gamma,\\[1ex]
&\partial_t\left(  \bar d_f S^{\a}_f(\gamma \bar p_c) \right)
+ \div_\tau (  \q^\a_f ) -  \jump{\q^\a_m} = h_f^\a & \mbox{ on } (0,T)\times \Gamma,\\[1ex]
& \q^\a_f = \dsp -  \eta^\a_f (S^{\a}_f(\gamma \bar p_c)) ({1\over 12} \bar d_f^3) \nabla_\tau \gamma \bar p^\a  & \mbox{ on } (0,T)\times \Gamma,\\[1ex]
& -\div \(\bbsig(\bar\bu) - b ~ \bar p^E_m{\mathbb I}\)= \mathbf{f} & \mbox{ on } (0,T)\times \Omega\setminus\overline\Gamma\\[1ex]
& \bbsig(\bar\bu) =  2\mu ~\bbeps(\bar\bu) + \lambda ~\div(\bar\bu) ~\mathbb{I} & \mbox{ on } (0,T)\times \Omega\setminus\overline\Gamma,  
\end{array}
\right.
\end{equation}
with 
\begin{equation}
\label{closure_laws}
\left\{
\begin{array}{lll}
& \partial_t \bar\phi_m = \dsp b~\div \partial_t \bar\bu + \frac{1}{M} \partial_t \bar p^E_m & \mbox{ on } (0,T)\times \Omega\setminus\overline\Gamma,\\  [1ex]
& {(\bbsig(\bar\bu) - b ~ \bar p^E_m \mathbb I)\n^\pm = - \bar p^E_f \n^\pm}  & \mbox{ on } (0,T)\times \Gamma,\\[1ex]
& \bar d_f = -\jump{\bar\bu}   & \mbox{ on } (0,T)\times \Gamma,  
\end{array}
\right.
\end{equation}
and the initial conditions
$$
\bar p^\a|_{t=0} = \bar p^{\a}_0,  \quad \bar \phi_m|_{t=0}= \bar\phi_m^0. 
$$
Here, we have denoted by $\bar p_c$ the capillary pressure, and the equivalent pressures $\bar p^E_m$ and $\bar p^E_f$ are defined, following~\cite{coussy}, by
$$
\bar p^E_{m} = \dsp \sum_{\alpha \in \{\g,\l\} } \bar p^\alpha~S^\alpha_m(\bar p_c)  - U_{m}(\bar p_c), \quad 
\bar p^E_{f} = \dsp \sum_{\alpha \in \{\g,\l\} } \gamma \bar p^\alpha~S^\alpha_{f}(\gamma \bar p_c)  - U_{f}(\gamma \bar p_c),
$$
where
\begin{equation}\label{eq.capillary_energy}
U_{\rm rt}(\bar p_c) =  \int_0^{\bar p_c} z  \left( S^{\g}_{{\rm rt}} \right)'(z) \,\d z
\end{equation}
is the capillary energy density function of the  rock type ${\rm rt}\in \{m,f\}$. As already noticed in \cite{KTJ2013,JJ14}, this is a key choice to obtain the energy estimates that are the starting point for the convergence analysis.

We make the following main assumptions on the data:
\begin{enumerate}[label=(H\arabic*),leftmargin=*]
\item For each phase $\a \in\{\g,\l\}$ and rock type ${\rm rt}\in\{m,f\}$, the mobility function $\eta^\a_{\rm rt}$ is continuous, non-decreasing, and there exist
  $0< \eta_{\rm rt,{\rm min}}^\a \leq \eta^\a_{\rm rt,{\rm max}}< +\infty$  such that 
  $\eta^\a_{\rm rt,{\rm min}} \leq \eta^\a_{\rm rt}(s) \leq \eta^\a_{\rm rt,{\rm max}}$ for all $s\in [0,1]$.
  \label{first.hyp}
\item\label{H2} For each rock type ${\rm rt}\in\{m,f\}$, the non-wetting phase saturation function $S^\g_{\rm rt}$ is a non-decreasing {Lipschitz}
continuous function with values in $[0,1]$, and   $S^\l_{\rm rt} = 1 - S^\g_{\rm rt}$.
\item $b\in [0,1]$ is the Biot coefficient, $M> 0$ is the Biot modulus, and $\lambda >0$, $\mu>0$ are the Lam\'e coefficients. These coefficients are assumed to be constant for simplicity.
\item The initial pressures are such that $\bar p^\alpha_0 \in V_0\cap L^\infty(\Omega)$ and $\gamma\bar p^\alpha_0\in L^\infty(\Gamma)$, $\alpha\in \{\g,\l\}$; the initial porosity is such that $\bar \phi^0_m \in L^\infty(\Omega)$.
\item The source terms satisfy $\mathbf f \in L^2(\Omega)^d$, $h_m^\alpha\in L^2((0,T)\times \Omega),$ and $h_f^\alpha \in L^2((0,T)\times \Gamma)$.
 \item The matrix permeability tensor $\K_{m}$ is symmetric and uniformly elliptic on $\Omega$. Note that the variation of the matrix permeability with the porosity is neglected. 
 \label{last.hyp}
\end{enumerate}

The notion of weak solution for \eqref{eq_edp_hydromeca}--\eqref{closure_laws} is classically obtained multiplying each flow equation and the mechanical equation by a separate test function, integrating by parts and, for each phase, adding together the equations resulting from the flows in the matrix and the fractures. When the capillary pressure has continuous first temporal and second spatial derivatives in $(0,T)\times (\Omega\backslash\Gamma)$, its trace has continuous first temporal and second tangential derivatives in $(0,T)\times \Gamma$, and the displacement has continuous second spatial derivatives, the following weak formulation is equivalent to the PDE model.

\begin{definition}[Weak solution of the model]
  A \emph{weak solution} of the model is given by $\bar p^\alpha \in L^2(0,T;V_0)$, $\alpha\in \{\g,\l\}$, and $\bar \bu \in L^\infty(0,T;\U_0)$, such that, for any $\alpha\in \{\g,\l\}$, $\bar{d}_f^{\;\nf 3 2}\nabla_\tau\gamma\bar p^\alpha\in L^2((0,T)\times\G)^{d-1}$ and, for all $\bar \varphi^\alpha\in C_c^\infty([0,T)\times\Omega)$ and all smooth functions $\bar \bv:[0,T]\times (\O\setminus\overline\G)\to\R^d$ vanishing on $\partial\Omega$ and whose derivatives of any order admit finite limits on each side of $\Gamma$,
\begin{equation}
\label{eq_var_hydro}      
\left.
\begin{array}{ll}
&\dsp \int_0^T \int_\O \(-\bar \phi_m S^{\a}_m(\bar p_c) \partial_t \bar \varphi^\a + \eta^\a_m ( S^{\a}_m(\bar p_c)) \K_m  \nabla \bar p^\a \cdot \nabla \bar \varphi^\a\) \d\x \d t \\[2ex]
  &   + \dsp \int_0^T \int_\G \( -\bar d_f S^{\a}_f(\gamma \bar p_c) \partial_t \gamma \bar \varphi^\a  + \eta^\a_f ( S^{\a}_f(\gamma \bar p_c)) {\bar d_f^{\;3}\over 12}  \nabla_\tau \gamma \bar p^\a \cdot \nabla_\tau \gamma \bar \varphi^\a \) \d\sigma(\x) \d t\\[2ex]
  &  - \dsp \int_\O \bar \phi_{m}^0 S^{\a}_m(\bar p_c^0) \bar \varphi^\a(0,\cdot)\d\x
  - \int_\G \bar d_f^0 S^{\a}_f(\gamma \bar p_c^0) \gamma \bar \varphi^\a(0,\cdot)\d\sigma(\x) \\[2ex]
  & = \dsp \int_0^T \int_\O h_m^\a \bar \varphi^\a \d\x \d t + \int_0^T \int_\G h_f^\a \gamma \bar \varphi^\a  ~\d\sigma(\x) \d t,
\end{array}
\right.
\end{equation}
\begin{equation}
  \label{eq_var_meca}
\begin{array}{ll}
  & \dsp \int_0^T \int_\O \( \bbsig(\bar \bu): \bbeps(\bar \bv) - b ~\bar p_m^E \div(\bar \bv)\) \d\x \d t + \int_0^T \int_\G \bar p_f^E ~\jump{\bar \bv}  ~\d\sigma(\x) \d t \\[2ex]
&  \dsp = \int_0^T \int_\Omega \mathbf{f}\cdot\bar\bv ~\d\x \d t, 
\end{array}
\end{equation}
with $\bar p_c = \bar p^\g - \bar p^\l$, $\bar d_f = -\jump{\bar\bu}$, $\bar\phi_m -\bar \phi_m^0= \dsp b~\div(\bar\bu-\bar\bu^0) + \frac{1}{M} (\bar p^E_m -\bar p_m^{E,0})$, $\bar d_f^0 = -\jump{\bar \bu^0}$, where $\bar \bu^0$ is the solution of \eqref{eq_var_meca} without the time integral and using the initial equivalent pressures 
$\bar p_m^{E,0}$ and $\bar p_f^{E,0}$ obtained from the initial pressures {$\bar p^\alpha_0$ and $\gamma\bar p^\alpha_0$, $\alpha\in \{\g,\l\}$.}
\label{def:weak.sol}
\end{definition}
\begin{remark}[Regularity of the fracture aperture]
Notice that, by the Sobolev--trace embeddings \cite[Theorem 4.12]{AF03}, $\bar \bu \in L^\infty(0,T;\U_0)$ implies that $\bar d_f = -\jump{\bar\bu}\in L^\infty(0,T;L^4(\G))$. All the integrals above are thus well-defined.
\end{remark}

\section{The gradient discretization method}\label{sec:gradientscheme}

The gradient discretization (GD) for the Darcy continuous pressure model, introduced in \cite{BGGLM16}, is defined by a finite-dimensional vector space of discrete unknowns $X^0_{\D_p}$ and 
\begin{itemize}
\item  two discrete gradient linear operators on the matrix and fracture domains
$$
 \nabla_{\D_p}^m : X^0_{\D_p} \rightarrow L^\infty(\Omega)^d, \quad \quad 
   \nabla_{\D_p}^f : X^0_{\D_p} \rightarrow L^\infty(\Gamma)^{d-1};
$$ 
\item two function reconstruction linear operators on the matrix and fracture domains 
$$
\Pi_{\D_p}^m : X^0_{\D_p} \rightarrow L^\infty(\Omega),\quad\quad \Pi_{\D_p}^f : X^0_{\D_p} \rightarrow L^\infty(\Gamma),
$$
which are \emph{piecewise constant} \cite[Definition 2.12]{gdm}.
\end{itemize}
A consequence of the piecewise-constant property is the following: there is a basis $(\mathbf{e}_i)_{i\in I}$ of $X^0_{\D_p}$ such that, if $v=\sum_{i\in I} v_i\mathbf{e}_i$ and if, for a mapping $g:\R\to\R$ with $g(0)=0$, we define $g(v)=\sum_{i\in I}g(v_i)\mathbf{e}_i\in X^0_{\D_p}$ by applying $g$ component-wise, then $\Pi_{\D_p}^{\rm rt} g(v)=g(\Pi_{\D_p}^{\rm rt} v)$ for $\mbox{\rm rt} \in \{m,f\}$. Note that the basis $(\mathbf{e}_i)_{i\in I}$ is usually canonical and chosen in the design of $X^0_{\D_p}$.
The vector space $X^0_{\D_p}$ is endowed with 
$$
\|v\|_{\D_p} \coloneq \displaystyle  \|\nabla_{\D_p}^m v\|_{L^2(\Omega)} 
+ \|d_{0}^{\nf 3 2}\nabla_{\D_p}^f v\|_{ L^2(\Gamma) },
$$
assumed to define a norm on $X^0_{\D_p}$.

The gradient discretization for the mechanics is defined by a finite-dimensional vector space of discrete unknowns $X^0_{\D_\bu}$ and
\begin{itemize}
\item  a discrete symmetric gradient linear operator  $\bbeps_{\D_\bu} : X^0_{\D_{\bu}} \rightarrow   L^2(\Omega, \S_{d}(\R))$,   
\item a displacement function reconstruction linear operator  
$\Pi_{\D_\bu} : X^0_{\D_{\bu}} \rightarrow L^2(\Omega)^d$,
\item a normal jump function reconstruction linear operator 
  $\jump{\cdot}_{\D_\bu} : X^0_{\D_{\bu}} \rightarrow L^4(\Gamma)$,   
\end{itemize}
where $\S_{d}(\R)$ is the vector space of real symmetric matrices of size $d$. 
Let us define the divergence and stress tensor operators by
$$
\div_{\D_\bu}(\bv) = \mbox{\rm Trace}(\bbeps_{\D_\bu}(\bv))\quad\mbox{ and }\quad
\bbsig_{\D_\bu}(\bv) =  2\mu \bbeps_{\D_\bu}(\bv) + \lambda \, \div_{\D_\bu}(\bv) \mathbb{I}, 
$$
and the fracture width $ d_{f,\D_\bu} = -\jump{\bu}_{\D_\bu}$. 
It is assumed that the following quantity defines a norm on $X^0_{\D_{\bu}}$:
$$
\|\bv\|_{\D_\bu} \coloneq \|\bbeps_{\D_\bu}(\bv)\|_{L^2(\Omega,\mathcal S_d(\mathbb R))}.
$$

\begin{remark}[On the boundary conditions]
The exponent $0$ in the spaces means that homogeneous Dirichlet boundary conditions are encoded in these spaces.
We restrict our analysis to these boundary conditions for simplicity but, as shown in \cite{gdm}, the GDM analysis can easily be adapted to other types of boundary conditions -- in particular to mixed Dirichlet/Neumann boundary conditions (with non-homogeneous Dirichlet values) as used for the flow part of the model in the numerical tests of Section \ref{sec:numerical.example}.
\end{remark}

A spatial GD can be extended into a space-time GD by complementing it with
\begin{itemize}
\item 
a discretization $ 0 = t_0 < t_1 < \dots < t_N = T $ of the time interval $[0,T]$;
\item
interpolators $I_{\D_p} \colon V_0 \rightarrow X^0_{\D_p}$ and $I^m_{\D_p} \colon L^2(\Omega)\rightarrow X^0_{\D_p}$ of initial conditions.
\end{itemize}
For $n\in\{0,\ldots,N\}$, we denote by $\dtn = t_{n+1}-t_n$ the time steps, and by $\Delta t = \max_{n=0,\ldots,N} \dtn$ the maximum time step. 

The spatial operators are extended into space-time operators as follows. Let $\chi$ represent either $p$ or $\bu$. If $w=(w_n)_{n=0}^{N}\in (X^0_{\D_\chi})^{N+1}$, and $\Psi_{\D_\chi}$ is a spatial GD operator, its space-time extension is defined by 
$$
\Psi_{\D_\chi}w(0,\cdot) = \Psi_{\D_\chi}w_0 \mbox{ and, } \forall n\in\{0,\dots,N-1\}\,,\;\forall t\in (t_n,t_{n+1}],\,\;
\Psi_{\D_\chi}w(t,\cdot) = \Psi_{\D_\chi}w_{n+1}.
$$
For convenience, the same notation is kept for the spatial and space-time operators.
Moreover, we define the discrete time derivative as follows: for $f:[0,T]\to L^1(\Omega)$ piecewise constant on the time discretization, with $f_n=f_{|(t_{n-1},t_n]}$ and $f_0=f(0)$, we set $\delta_t f (t) = \frac{f_{n+1} - f_n}{\dtn}$ for all  $t\in (t_n,t_{n+1}]$, $n\in\{0,\ldots,N-1\}$. 
  
Notice that the space of piecewise constant $X^0_{\D_\chi}$-valued functions $f$ on the time discretization together with the initial value $f_0=f(0)$ can be identified with $(X^0_{\D_\chi})^{N+1}$. The same definition of discrete derivative can thus be given for an element $w\in(X^0_{\D_\chi})^{N+1}$.
Namely, \mbox{$\delta_t w \in (X^0_{\D_\chi})^{N}$} is defined by setting, for any $n\in\{0,\ldots,N-1\}$ and $t\in (t_{n},t_{n+1}]$, $\delta_t w(t)=(\delta_t w)_{n+1} \coloneq \frac{w_{n+1} - w_n}{\dtn}$. If $\Psi_{\D_\chi}(t,\cdot)$ is a space-time GD operator, by linearity the following commutativity property holds: $\Psi_{\D_\chi} \delta_t w (t,\cdot) = \delta_t(\Psi_{\D_\chi} w(t,\cdot))$.

The gradient scheme for the system consists in replacing the ``continuous'' functional space and differential operators in  \eqref{eq_var_hydro}--\eqref{eq_var_meca} by their discrete counterparts. This results in the following discrete problem:
find $p^\alpha \in (X^0_{\D_p})^{N+1}$, $\alpha\in \{\g,\l\}$, and $\bu \in (X^0_{\D_\bu})^{N+1}$, such that for all $\varphi^\alpha \in (X_{\D_p}^0)^{N+1}$, $\bv \in (X^0_{\D_\bu})^{N+1}$ and $\alpha\in \{\g,\l\}$,
\begin{subequations}\label{eq:GS}
\begin{align}
&\left.\begin{array}{llll}
  && \dsp \int_0^T \int_\Omega \( \delta_t \(\phi_\D \Pi_{\D_p}^m s^\alpha_m \)\Pi_{\D_p}^m \varphi^\alpha 
  +  \eta_m^\alpha(\Pi_{\D_p}^m s_m^\alpha) \K_m \nabla_{\D_p}^m p^\alpha \cdot   \nabla_{\D_p}^m \varphi^\alpha \) \d\x \d t\\[2ex]
  && + \dsp \int_0^T \int_\Gamma \delta_t \(d_{f,\D_\bu} \Pi_{\D_p}^f s^\alpha_f \)\Pi_{\D_p}^f \varphi^\alpha \d\sigma(\x)\d t\\
  && + \dsp \int_0^T \int_\Gamma  \eta_f^\alpha(\Pi_{\D_p}^f s_f^\alpha) {d_{f,\D_\bu}^3 \over 12} \nabla_{\D_p}^f p^\alpha \cdot   \nabla_{\D_p}^f \varphi^\alpha  \d\sigma(\x) \d t\\[2ex]
  && = \dsp \int_0^T \int_\Omega h_m^\alpha \Pi_{\D_p}^m \varphi^\alpha \d\x \d t + \int_0^T \int_\Gamma h_f^\alpha \Pi_{\D_p}^f \varphi^\alpha \d\sigma(\x)\d t,\\[3ex]
  \end{array}\right.
  \label{GD_hydro}
\\
& 
  \left.\begin{array}{llll}
    &&   \dsp \int_0^T \int_\Omega \( \bbsig_{\D_\bu}(\bu) : \bbeps_{\D_\bu}(\bv)  
    - b ~\Pi_{\D_p}^m p_m^E~  \div_{\D_\bu}(\bv)\)  \d\x \d t\\
    && \quad \quad\quad + \dsp \int_0^T \int_\Gamma \Pi_{\D_p}^f p_f^E~  \jump{\bv}_{\D_\bu} \d\sigma(\x)\d t 
    = \int_0^T \int_\Omega \mathbf{f} \cdot \Pi_{\D_\bu} \bv ~\d\x \d t, 
  \end{array}\right.
  \label{GD_meca}
\end{align}
with the closure equations
\begin{equation}
  \left\{\begin{array}{ll}
& p_{c} = p^\g - p^\l,  \quad s^\alpha_m = S^\alpha_m(p_c),  \quad s^\alpha_f = S^\alpha_f(p_c), \\[2ex]
    & \dsp p_m^E = \sum_{\alpha\in\{\g,\l\}}  p^\alpha s^\alpha_m - U_m(p_{c}),\quad \dsp p_f^E = \sum_{\alpha\in\{\g,\l\}}  p^\alpha s^\alpha_f - U_f(p_{c}),\\[4ex]
  &  \phi_{\D} - \Pi_{\D_p}^m  \phi_m^0 = b ~\div_{\D_\bu} (\bu-\bu^0) + {1\over M} \Pi_{\D_p}^m (p_m^E-p_m^{E,0}),\\[2ex]
  & d_{f,\D_\bu} = -\jump{\bu}_{\D_\bu},\\[2ex]
  & \bbsig_{\D_\bu}(\bv) =  2\mu \bbeps_{\D_\bu}(\bv) + \lambda \, \div_{\D_\bu}(\bv) \mathbb{I}.  
  \end{array}\right.
  \label{GD_closures}
\end{equation}
\end{subequations}
The initial conditions are given by $p^\a_0 = I_{\D_p} \bar p^\a_0$ ($\a\in\{\g,\l\}$), $\phi_m^0 = I_{\D_p}^m \bar \phi^0_m$, and the initial displacement $\bu^0$ is the solution of
\eqref{GD_meca} without the time variable and with the equivalent pressures obtained from the initial pressures $(p^\alpha_0)_{\alpha\in\{\g,\l\}}$.

\begin{remark}[Non-homogeneous boundary conditions]
The homogeneous Dirichlet boundary conditions are embedded in the discrete spaces $X^0_{\D_p}$ and $X^0_{\D_\bu}$. Non-homogeneous (or other types of) boundary conditions are equally easy to handle in the GDM setting \cite[Section 2.2 and Chapter 3]{gdm}.
\end{remark}

\begin{remark}[GDM framework]
As shown above, the GDM framework enables a presentation of the schemes in a way that is almost as compact as the weak formulation itself (compare with Definition \ref{def:weak.sol}). This presentation is valid for conforming methods, that already have a compact writing but may not be the best suited in practical applications (especially for the flow component), but also for non-conforming methods of practical interest in engineering; explicitly writing, for example, the TPFA formulation for the flow component of the model would lead to much lengthier equations. Additionally, the GDM analysis is also carried out in a compact way, identifying key properties and manipulating discrete equations almost as their continuous counterparts; notwithstanding the fact that this analysis applies to many different methods at once, developing it for a given specific scheme would not lead to any simplification -- the complexity in the upcoming analysis comes from the poromechanical model we consider, not from the numerical analysis framework we use.
\end{remark}

\subsection{Properties of gradient discretizations}\label{sec:prop}

Let $(\D_p^l)_{l\in\N}$ and $(\D_\bu^l)_{l\in\N}$ be sequences of GDs. We state here the assumptions on these sequences which ensure that the solutions to the corresponding schemes converge. Most of these assumptions are adaptation of classical GD assumptions \cite{gdm}, except for the chain-rule, product rule and cut-off properties used in Subsection \ref{subsec:compactness} to obtain compactness properties; we note that all these assumptions hold for standard discretizations used in porous media flows.

Following  \cite{BGGLM16}, the spatial GD of the Darcy flow $\D_p=\(X_{\D_p}^0,\nabla_{\D_p}^m,\nabla_{\D_p}^f,\Pi_{\D_p}^m,\Pi_{\D_p}^f\)$ is assumed to satisfy the following coercivity, consistency, limit-conformity and compactness properties.

\noindent {\bf Coercivity of $\D_p$}. Let $C_{\D_p} > 0$ be defined by 
\begin{equation}\label{def_CDdarcy}
C_{\D_p} = \max_{0 \neq v \in X_{\D_p}^0} {\|\Pi^m_{\D_p} v\|_{L^2(\Omega)} + \|\Pi^f_{\D_p} v \|_{L^2(\Gamma)} \over \|v \|_{\D_p}}. 
\end{equation}
Then, 
a sequence of spatial GDs $(\D_p^l)_{l\in{\mathbb N}}$
 is said to be {\emph{coercive}} if there exists $\overline C_p >0$ 
such that $C_{\D^l_p} \leq \overline C_p$ for all $l\in \N$.

\noindent {\bf Consistency of $\D_p$}.
Let $r>8$ be given, and for all $w\in V_0$ and $v\in X_{\D_p}^0$ let us define 
\begin{equation}\label{def_sDdarcy}
\begin{aligned}
S_{\D_p}(w,v) ={}&  
\|\nabla^m_{\D_p} v  - \nabla w\|_{L^2(\Omega)} 
+ \|\nabla^f_{\D_p} v  -\nabla_\tau \gamma w\|_{L^r(\Gamma)} \\
&+ \|\Pi^m_{\D_p} v  -w\|_{L^2(\Omega)} + \|\Pi^f_{\D_p} v - \gamma w\|_{L^r(\Gamma)},
\end{aligned}
\end{equation}
and 
${\cal S}_{\D_p}(w) = \min_{v \in X_{\D_p}^0} S_{\D_p}(w,v)$. 
Then, a sequence of spatial GDs $(\D_p^l)_{l\in{\mathbb N}}$ is said to be {\emph{consistent}} if for all $w\in V_0$ one has
$
\lim_{l \rightarrow +\infty} {\cal S}_{\D_p^l}(w) = 0. 
$
Moreover, if $(\D_p^l)_{l\in\N}$ is a sequence of \emph{space-time} GDs, then it is said to be consistent if the underlying sequence of spatial GDs is consistent as above, and if, for any $\varphi\in V_0$ and $\psi\in L^2(\Omega)$,
as $l\to+\infty$,
\begin{equation}\label{eq:cons.stGD}
\Delta\!t^l\to 0\,,\ \|\Pi^m_{\D_p^l} I_{\D_p^l} \varphi-\varphi\|_{L^2(\Omega)}+
\|\Pi^f_{\D_p^l} I_{\D_p^l} \varphi-\varphi\|_{L^2(\Gamma)}\to 0\mbox{ and }
\|\Pi^m_{\D_p^l} I_{\D_p^l}^m \psi-\psi\|_{L^2(\Omega)}\to 0.
\end{equation}

\begin{remark}[Consistency]
In \cite{BGGLM16}, the consistency is only considered for $r=2$. As it will appear clear in the analysis, dealing with the coupling and non-linearity of the model requires us to adopt here a slightly stronger consistency assumption. Under standard mesh regularity assumptions, this stronger consistency property is still satisfied for all classical GDs \cite[Part III]{gdm}.
\end{remark}

\noindent {\bf Limit-conformity of $\D_p$}. For all $(\r_m, \r_f) \in C^\infty(\Omega\setminus\overline\Gamma)^d \times C^\infty(\Gamma)^{d-1} $ and 
$v\in X_{\D_p}^0$, let us define 
\begin{equation}\label{def_wDdarcy}
  \begin{aligned}
  W_{\D_p}(\r_m, \r_f,v) ={}& 
  \int_\Omega \( \r_m \cdot \nabla^m_{\D_p} v +\ \Pi^m_{\D_p}v ~ \div(\r_m) \)\d\x \\
 & + \int_\Gamma \( \r_f \cdot \nabla^f_{\D_p} v +\ \Pi^f_{\D_p}v ~ ( \div_\tau(\r_f) -\jump{\r_m})\)\d\sigma(\x),
  \end{aligned}
\end{equation}
and 
${\cal W}_{\D_p}(\r_m, \r_f) = \displaystyle \max_{0\neq v \in X_{\D_p}^0}
{   |W_{\D_p}(\r_m, \r_f,v)| \over\|v \|_{\D_p} }$. 
Then, a sequence of spatial GDs $(\D_p^l)_{l\in{\mathbb N}}$ 
is said to be {\emph{limit-conforming}} if for all $(\r_m, \r_f) \in C^\infty(\Omega\setminus\overline\Gamma)^d \times C_c^\infty(\Gamma)^{d-1}$ one has 
$\lim_{l \rightarrow +\infty} {\cal W}_{\D_p^l}(\r_m, \r_f) = 0$. Here $C_c^\infty(\Gamma)^{d-1}$ denotes the space of functions whose restriction to each $\Gamma_i$ {is} in $C^\infty(\Gamma_i)^{d-1}$ tangent to $\Gamma_i$, compactly supported away from the tips, and satisfying normal flux conservation at fracture intersections not located at the boundary $\partial\Omega$. 

\begin{remark}[Compactly supported fluxes]
The role of $(\r_m, \r_f)$ is that of test functions (they do not represent the continuous fluxes), to show that the limits of the discrete fluxes are indeed the continuous fluxes, see \cite[Lemma~5.5]{BGGLM16}.
\end{remark}

\noindent {\bf (Local) compactness} of $\D_p$. A sequence of spatial GDs  
$(\D_p^l)_{l\in{\mathbb N}}$ is said to 
be {\emph{locally compact}} if for all sequences $(v^l)_{l\in {\mathbb N}}{\in (X^0_{\D_p^l})_{l\in\N}}$
such that $\sup_{l\in\N}\|v^l\|_{\D_p^l}<+\infty$ and all compact sets $K_m\subset \Omega$ and $K_f\subset\Gamma$, such that $K_f$ is disjoint from the intersections $(\overline{\Gamma}_i\cap\overline{\Gamma}_j)_{i\not=j}$, the sequences $(\Pi^m_{\D_p^l}v^l)_{l\in\N}$ and  $(\Pi^f_{\D_p^l}v^l)_{l\in \N}$ are relatively compact in $L^2(K_m)$ and $L^2(K_f)$, respectively.

\begin{remark}[Local compactness through estimates of space translates]\label{rem:compact.equivalent}
For $K_m,K_f$ as above, set
\begin{equation*}
T_{\D_p^l,K_m,K_f}(\xi,\eta)=\max_{v\in X^0_{\D_p^l}\backslash\{0\}}\frac{\|\Pi^m_{\D_p^l}v(\cdot+\xi)-\Pi^m_{\D_p^l}v\|_{L^2(K_m)}+\sum_{i\in I}\|\Pi^f_{\D_p^l}v(\cdot+\eta_i)-\Pi^f_{\D_p^l}v\|_{L^2(K_f\cap\Gamma_i)}}{\|v\|_{\D_p^l}},
\end{equation*}
where $\xi\in\R^d$, $\eta=(\eta_i)_{i\in I}$ with $\eta_i$ tangent to $\Gamma_i$; for $\xi$ and $\eta$ small enough,
this expression is well defined since $K_m$ and $K_f$ are compact in $\Omega$ and $\Gamma$, respectively. Following \cite[Lemma 2.21]{gdm}, an equivalent formulation of the local compactness property is: for all $K_m,K_f$ as above,
\begin{equation*}
\lim_{\xi,\eta\to 0}\sup_{l\in\N}T_{\D_p^l,K_m,K_f}(\xi,\eta)=0.
\end{equation*}
\end{remark}

\begin{remark}[Usual compactness property for GDs]
The standard compactness property for GD is not \emph{local} but \emph{global}, that is, on the entire domain and not any of its compact subsets (see, e.g., \cite[Definition 2.8]{gdm} and also below for $\D_\bu$). Two reasons pushed us to consider here the weaker notion of local compactness: firstly, for standard GDs, the global compactness does not seem obvious to establish (or even true) in the fractures, because of the weight $d_0$ in the norm $\|\cdot\|_{\D_p}$, which prevents us from estimating the translates of the reconstructed function by the gradient near the fracture tips; secondly, we will only prove compactness on saturations, which are uniformly bounded by 1 and for which local and global compactness are therefore equivalent.

In the following, for brevity we refer to the local compactness of $(\D_p^l)_{l\in\N}$ simply as the \emph{compactness} of this sequence of GDs.
\end{remark}

\noindent\textbf{Chain rule estimate on $(\D_p^l)_{l\in\N}$}: for any Lipschitz-continuous function $F:\R\to\R$, there is $C_F\ge 0$ such that, for all $l\in\N$, $v\in X_{\D_p^l}^0$,  $$\|\nabla_{\D_p^l}^m F(v)\|_{L^2(\Omega)}\le C_F \|\nabla_{\D_p^l}^m v\|_{L^2(\Omega)}.$$

\noindent\textbf{Product rule estimate on $(\D_p^l)_{l\in\N}$}:  there exists $C_P$ such that, for any $l\in\N$ and any $u^l,v^l\in X_{\D_p^l}^0$, it holds
$$
\|\nabla^m_{\D_p} (u^l v^l) \|_{L^2(\Omega)} \leq C_P\( |u^l|_{\infty} \|\nabla^m_{\D_p} v^l \|_{L^2(\Omega)} + |v^l|_{\infty} \|\nabla^m_{\D_p} u^l \|_{L^2(\Omega)} \),
$$
where $|w|_{\infty}\coloneq\max_{i\in I}|w_i|$ whenever $w=\sum_{i\in I}w_i\mathbf{e}_i$ with $(\mathbf{e}_i)_{i\in I}$ the canonical basis of $X^0_{\D_p^l}$.

\noindent\textbf{Cut-off property of $(\D_p^l)_{l\in\N}$}: for any compact set $K\subset \Omega\backslash \Gamma$, there exists $C_K\ge 0$ and $(\psi^l)_{l\in\N}\in (X^0_{\D_p^l})_{l\in\N}$ such that $(|\psi^l|_{\infty})_{l\in\N}$ is bounded and, for $l$ large enough: 
$$
\begin{gathered}
\Pi_{\D_p^l}^m\psi^l\ge 0\text{ on }\Omega; \quad \Pi_{\D_p^l}^m\psi^l = 1\text{ on }K;
\quad  \|\nabla_{\D_p^l}^m\psi^l\|_{L^2(\Omega)}\le C_K\\ \Pi_{\D_p^l}^f (v^l\psi^l)=0 \ \ \text{and} \ \
\nabla_{\D_p^l}^f(v^l\psi^l)=0\quad\text{for all }v^l\in X_{\D_p^l}^0
\end{gathered}
$$

\noindent {\bf Coercivity of $(\D_\bu^l)_{l\in\N}$}.
Let $C_{\D_\bu} > 0$ be defined by 
\begin{equation}\label{def_CDmeca}
C_{\D_\bu} = \max_{\mathbf 0 \neq \bv \in X_{\D_\bu^l}^0} {\|\Pi_{\D_\bu^l} \bv\|_{L^2(\Omega)} + \|\jump{\bv}_{\D_\bu^l}\|_{L^4(\Gamma)} \over \|\bv \|_{\D_\bu^l}}.
\end{equation}
Then, 
the sequence of spatial GDs $(\D_\bu^l)_{l\in{\mathbb N}}$
 is said to be {\emph{coercive}} if there exists $\overline C_{\bu} >0$ 
such that $C_{\D^l_\bu} \leq \overline C_{\bu}$ for all $l\in \N$.

\noindent {\bf Consistency of $(\D_\bu^l)_{l\in\N}$}. For all $\textbf{w}\in \U_0$, it holds $\lim_{l \rightarrow +\infty} {\cal S}_{\D_\bu^l}(\textbf{w}) = 0$ where
\begin{equation}\label{eq:def.SDu}
{\cal S}_{\D_\bu^l}(\textbf{w})=  {\min_{\bv \in X_{\D_\bu^l}^0}}\Big[
\|\bbeps_{\D_\bu^l}(\bv)  - \bbeps(\textbf{w})\|_{L^2(\Omega,\S_{d}(\R))} 
+ \|\Pi_{\D_\bu^l} \bv  -\textbf{w}\|_{L^2(\Omega)} + \left\|\jump{\bv}_{\D_\bu^l}  - \jump{\textbf{w}}\right\|_{L^4(\Gamma)}\Big].
\end{equation}

\noindent {\bf Limit-conformity of $(\D_\bu^l)_{l\in\N}$}.
Let $C_\Gamma^\infty(\Omega\setminus\overline\Gamma,\S_{d}(\R))$ denote the vector space of smooth functions $\bbtau : \Omega\setminus\overline\Gamma \to \S_{d}(\R)$ whose derivatives of any order admit finite limits on each side of $\Gamma$, and such that $\bbtau^+(\x) \n^+ + \bbtau^-(\x) \n^- = {\mathbf 0}$ and $(\bbtau^+(\x) \n^+) {\times} \n^+ = {\mathbf 0}$ for 
a.e.\ $\x\in \Gamma$. For all $\bbtau \in C_\Gamma^\infty(\Omega\setminus\overline\Gamma,\S_{d}(\R))$, it holds
$\lim_{l \rightarrow +\infty} {\cal W}_{\D_\bu^l}(\bbtau) = 0$ where
\begin{equation*}
 {{\cal W}_{\D_\bu^l}(\bbtau)}{} ={\max_{\mathbf 0 \neq \bv \in X_{\D_\bu^l}^0}\frac{1}{\|\bv \|_{\D_\bu^l}}}\left[
  \int_\Omega \( \bbtau : \bbeps_{\D_\bu^l}(\bv) + \Pi_{\D_\bu^l}\bv \cdot \div(\bbtau) \)\d\x 
 - \int_\Gamma  (\bbtau \n^+)\cdot\n^+ \jump{\bv}_{\D_\bu^l} \d\sigma(\x)\right].
\end{equation*}

\noindent {\bf Compactness of $(\D_\bu^l)_{l\in\N}$}. For any sequence $(\bv^l)_{l\in {\mathbb N}}{\in (X^0_{\D_\bu^l})_{l\in\N}}$ such that {$\sup_{l\in\N}\|\bv^l\|_{\D_\bu^l}<+\infty$}, the sequences $(\Pi_{\D_\bu^l}\bv^l)_{l\in\N}$ and $(\jump{\bv^l}_{\D_\bu^l})_{l\in \N}$ 
are relatively compact in $L^2(\Omega)^d$ and in $L^s(\Gamma)$ for all $s<4$, respectively.

\begin{remark}[Compactness through estimates of space translates]\label{rem:compact.equivalent.Du}
Similarly to Remark \ref{rem:compact.equivalent} (see also \cite[Lemma 2.21]{gdm}), the compactness of $(\D_\bu^l)_{l\in\N}$ is equivalent to
\begin{equation*}
\lim_{\xi,\eta\to 0}\sup_{l\in\N}T_{\D_\bu^l,s}(\xi,\eta)=0\quad\forall s<4,
\end{equation*}
where
\begin{equation*}
T_{\D_\bu^l,s}(\xi,\eta)=\max_{\bv\in X^0_{\D_\bu^l}\backslash\{0\}}\frac{\|\Pi_{\D_\bu^l}\bv(\cdot+\xi)-\Pi_{\D_\bu^l}\bv\|_{L^2(\Omega)}+\sum_{i\in I}\left\|\jump{\bv^l}_{\D_\bu^l}(\cdot+\eta_i)-\jump{\bv^l}_{\D_\bu^l}\right\|_{L^s(\Gamma_i)}}{\|\bv\|_{\D_\bu^l}},
\end{equation*}
with $\xi\in\R^d$, $\eta=(\eta_i)_{i\in I}$ with $\eta_i$ tangent to $\Gamma_i$, and the functions extended by $0$ outside their respective domain $\Omega$ or $\Gamma$.
\end{remark}

\section{Convergence analysis}\label{sec:convergence}
The main result of this work is the following theorem stating the convergence of the sequence of discrete solutions to a weak solution up to a subsequence. 

\begin{theorem}[Convergence to a weak solution]\label{thm.conv}
Let $(\D_p^l)_{l\in\N}$, $(\D_\bu^l)_{l\in\N}$, $\{(t^l_n)_{n=0}^{N^l}\}_{l\in\N}$ (where $N^l$ is the number of time steps of $\D_p^l$), be sequences of space time GDs assumed to satisfy the properties described in Section \ref{sec:prop}.
Let $\phi_{m,{\rm min}} > 0$ and assume that, for each $l\in \N$, the gradient scheme \eqref{GD_hydro}--\eqref{GD_meca} has a solution $p^\alpha_l\in (X_{\D_p^l}^0)^{N^l+1}$, $\alpha\in \{\g,\l\}$, $\bu^l \in (X_{\D_\bu^l}^0)^{N^l+1}$ such that
  \begin{itemize}
\item[(i)] $d_{f,\D_\bu^l}(t,\x) \geq d_0(\x)$ for a.e.\ $(t,\x) \in (0,T)\times \Gamma$, 
\item[(ii)] $\phi_{\D^l}(t,\x)\geq \phi_{m,{\rm min}}$ for a.e.\ $(t,\x) \in (0,T)\times \Omega$.
  \end{itemize}
  Then, there exist $\bar p^\alpha\in L^2(0,T;V_0)$, $\alpha\in \{\g,\l\}$, and $\bar \bu\in L^\infty(0,T;\U_0)$
  satisfying the weak formulation \eqref{eq_var_hydro}--\eqref{eq_var_meca} such that for $\alpha\in \{\g,\l\}$ and up to a subsequence 
\begin{equation*}
\begin{array}{lll}
  & \Pi^m_{\D_p^l} p^\alpha_l \weakto \bar p^\alpha & \mbox{weakly in } L^2(0,T;L^2(\Omega)), \\[1ex]
  & \Pi^f_{\D_p^l} p^\alpha_l \weakto \gamma \bar p^\alpha & \mbox{weakly in } L^2(0,T;L^2(\Gamma)), \\[1ex]
  & \Pi_{\D_\bu^l} \bu^l \weakto \bar \bu & \mbox{weakly-$\star$ in } L^\infty(0,T;L^2(\Omega)^d),\\[1ex]
  & \phi_{\D^l} \weakto \bar \phi_m & \mbox{weakly-$\star$ in } L^\infty(0,T;L^2(\Omega)),\\[1ex]
  & d_{f,\D_\bu^l} \rightarrow \bar d_f & \mbox{in } L^\infty(0,T;L^p(\Gamma)) \mbox{ for } 2 \leq p < 4,\\[1ex]
  & \Pi^m_{D_p^l} S^\alpha_m(p_c^l) \rightarrow S^\alpha_m(\bar p_c) & \mbox{in } L^2(0,T;L^2(\Omega)) ,\\[1ex]
  & \Pi^f_{D_p^l} S^\alpha_f(p_c^l) \rightarrow S^\alpha_f(\gamma \bar p_c) & \mbox{in } L^2(0,T;L^2(\Gamma)),
\end{array}
\end{equation*}
where $\bar \phi_m = \bar \phi_m^0 + \dsp b~\div(\bar\bu-\bar\bu^0) + \frac{1}{M} (\bar p^E_m -\bar p_m^{E,0})$,
$\bar d_f = -\jump{\bar\bu}$, and $\bar p_c = \bar p^\g - \bar p^\l$. 
\end{theorem}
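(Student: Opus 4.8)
The plan is to follow the three-stage programme announced in the introduction: a priori energy estimates, compactness, and identification of the limit. First I would test the flow equations \eqref{GD_hydro} with $\varphi^\alpha=p^\alpha$, summed over $\alpha\in\{\g,\l\}$, and the mechanics equation \eqref{GD_meca} with the discrete time derivative $\delta_t\bu$. The purpose of the equivalent-pressure/capillary-energy structure (with $U_{\rm rt}$ from \eqref{eq.capillary_energy}) is that the accumulation contributions $\sum_\alpha\delta_t(\phi_\D\Pi^m_{\D_p}s^\alpha_m)\,\Pi^m_{\D_p}p^\alpha$ reorganise into a discrete time derivative of a nonnegative free energy, plus the coupling term $\Pi^m_{\D_p}p^E_m\,\delta_t\phi_\D$, and similarly in the fracture with $\Pi^f_{\D_p}p^E_f\,\delta_t d_{f,\D_\bu}$. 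Using the porosity closure $\delta_t\phi_\D=b\,\delta_t\div_{\D_\bu}\bu+\frac1M\delta_t\Pi^m_{\D_p}p^E_m$ from \eqref{GD_closures}, the parts of these coupling terms involving $\div_{\D_\bu}\bu$ and $d_{f,\D_\bu}=-\jump{\bu}_{\D_\bu}$ cancel exactly against the terms produced by the mechanics tested with $\delta_t\bu$, while the remaining $\frac1M$ contribution yields a nonnegative stored energy. After a discrete summation by parts in time, this gives uniform bounds on $\|\bbeps_{\D_\bu}(\bu^l)\|_{L^\infty(0,T;L^2)}$ and on the flow dissipation. Assumptions (i)--(ii) enter precisely here, as the coercivity of the dissipation: since $d_{f,\D_\bu^l}\ge d_0$, the conductivity $d_{f,\D_\bu^l}^3/12$ dominates the weight $d_0^3$ implicit in $\|\cdot\|_{\D_p}$ and controls $\|d_0^{3/2}\nabla^f_{\D_p}p^\alpha_l\|_{L^2}$, and $\phi_{\D^l}\ge\phi_{m,\min}$ furnishes the positivity used later. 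Coercivity of the two GDs then transfers these bounds to the reconstructions, and \eqref{GD_closures} bounds $\phi_{\D^l}$ and $\Pi^m_{\D_p}p^E_m$ in $L^\infty(0,T;L^2)$.

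\textbf{Weak limits and mechanical compactness.} From these bounds I would extract the weak and weak-$\star$ limits listed in the statement, and invoke the limit-conformity of $(\D_p^l)$ and $(\D_\bu^l)$ to identify the weak limits of $\nabla^m_{\D_p}p^\alpha_l$, $\nabla^f_{\D_p}p^\alpha_l$ and $\bbeps_{\D_\bu}(\bu^l)$ with $\nabla\bar p^\alpha$, $\nabla_\tau\gamma\bar p^\alpha$ and $\bbeps(\bar\bu)$, thereby placing $\bar p^\alpha\in L^2(0,T;V_0)$ and $\bar\bu\in L^\infty(0,T;\U_0)$. The compactness of $(\D_\bu^l)$ applied to $\jump{\bu^l}_{\D_\bu}$ then yields the strong convergence $d_{f,\D_\bu^l}\to\bar d_f$ in $L^\infty(0,T;L^p(\Gamma))$, $p<4$; combined with $\bar d_f\ge d_0$ this also upgrades the weighted fracture-gradient bound to $\bar d_f^{3/2}\nabla_\tau\gamma\bar p^\alpha\in L^2$.

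\textbf{Strong convergence of the saturations (main obstacle).} The crux is the strong convergence of the saturations. Here I would set up a discrete Aubin--Simon argument for the accumulation terms $\phi_{\D^l}\Pi^m_{\D_p}s^\alpha_m$ (and their fracture analogues): the local compactness of $(\D_p^l)$ provides uniform space-translation estimates (Remark \ref{rem:compact.equivalent}), while a uniform time-translation estimate follows by testing \eqref{GD_hydro} with time increments and bounding the fluxes and sources through the a priori estimates, the cut-off, chain-rule and product-rule properties being used to handle the fracture and intersection contributions. This gives local strong convergence of $\phi_{\D^l}\Pi^m_{\D_p}s^\alpha_m$ for both phases; since $\Pi^m_{\D_p}s^\g_m+\Pi^m_{\D_p}s^\l_m=1$ (piecewise-constant property together with $S^\l_m=1-S^\g_m$), summing the two limits shows that $\phi_{\D^l}$ itself converges strongly, after which the lower bound $\phi_{\D^l}\ge\phi_{m,\min}$ allows one to divide and obtain local strong convergence of $\Pi^m_{\D_p}s^\alpha_m$; the global $L^2$ convergence then follows from the uniform bound $0\le\Pi^m_{\D_p}s^\alpha_m\le1$. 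A Minty-type monotonicity argument, using that $S^\g_{\rm rt}$ is nondecreasing and continuous together with the weak convergence of $\Pi^m_{\D_p}p_c^l$, identifies the limit as $S^\alpha_m(\bar p_c)$ (and likewise in the fracture, dividing instead by $d_{f,\D_\bu^l}\ge d_0$). The difficulty concentrated in this step is that the accumulation is a nonlinear product coupling the porosity---built from $\div\bu$ and $p^E_m$, for which only weak compactness is a priori available---to the saturation, with different capillary fields in matrix and fracture, so the time-translation estimate and the identification of the limit must be carried out jointly with the mechanical compactness.

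\textbf{Passage to the limit.} Finally I would pass to the limit term by term in \eqref{GD_hydro}--\eqref{GD_meca}, testing with interpolants $I_{\D_p}\bar\varphi^\alpha$ of the flow test functions and suitable interpolants of the mechanical test fields. A discrete summation by parts in time transfers $\delta_t$ onto the test function, so that $\delta_t(\Pi^m_{\D_p}I_{\D_p}\bar\varphi^\alpha)\to\partial_t\bar\varphi^\alpha$ matches \eqref{eq_var_hydro}; the accumulation and equivalent-pressure terms converge by the strong convergence of the saturations and of $d_{f,\D_\bu^l}$, the Darcy terms by weak--strong products (strong convergence of the mobility coefficients $\eta^\alpha_{\rm rt}(\Pi_{\D_p}s^\alpha_{\rm rt})$ against weak convergence of the discrete gradients), and the elastic terms by weak convergence of $\bbeps_{\D_\bu}(\bu^l)$. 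The initial terms are handled by the consistency \eqref{eq:cons.stGD} of $I_{\D_p}$ and $I^m_{\D_p}$, while the consistency of the GDs ensures the test fields are correctly approximated. Collecting these limits shows that $(\bar p^\alpha,\bar\bu)$ satisfies \eqref{eq_var_hydro}--\eqref{eq_var_meca}, which completes the proof.
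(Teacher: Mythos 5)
Your three-stage strategy (capillary-energy estimates, compactness, limit passage via Minty and weak--strong products) is the paper's, and your energy-estimate and limit-passage stages match Lemma \ref{lemma_apriori} and Subsection \ref{subsec:convergence} faithfully. The gap is in the compactness stage, and it concerns the matrix porosity. You propose to obtain local strong convergence of $\phi_{\D^l}\Pi^m_{\D_p}s^\alpha_m$ by a discrete Aubin--Simon argument, sum over $\alpha$ to deduce strong convergence of $\phi_{\D^l}$, and then divide by $\phi_{\D^l}\geq\phi_{m,\min}$. This fails at the first step: Kolmogorov/Aubin--Simon compactness requires \emph{space}-translate estimates on the quantity in question, and none are available for $\phi_{\D^l}\Pi^m_{\D_p}s^\alpha_m$, because $\phi_{\D^l}$ is built from $\div_{\D_\bu}\bu^l$ and $\Pi^m_{\D_p}p^{E,l}_m$ (closure equations \eqref{GD_closures}), which are only \emph{bounded} in $L^2$; the quasi-static linear mechanics provides no spatial regularity for the divergence of the displacement, and the equivalent pressure involves phase pressures that converge only weakly. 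Consequently strong convergence of $\phi_{\D^l}$ is not provable with the available tools --- consistently, the theorem claims only weak-$\star$ convergence of $\phi_{\D^l}$, and the paper never establishes nor needs a strong limit for it. The paper's Proposition \ref{prop_compactness_Sm} runs the argument the other way around: the lower bound $\phi_{\D}\geq\phi_{m,\min}$ is used to \emph{insert} the porosity as a weight inside the squared time increment of the saturation, $\|\Pi^m_{\D_p}s^\alpha_m(\cdot+\tau,\cdot)-\Pi^m_{\D_p}s^\alpha_m\|^2_{L^2(K)}\lsim \int(\Pi^m_{\D_p}\psi)\,\phi_\D\,(\Pi^m_{\D_p}s^\alpha_m(\cdot+\tau,\cdot)-\Pi^m_{\D_p}s^\alpha_m)^2$, and this weighted increment is expanded algebraically into terms involving only \emph{time} translates of $\phi_\D\Pi^m_{\D_p}s^\alpha_m$ and of $\phi_\D$, both of which are controlled by the scheme (Proposition \ref{prop_timetranslates} and \eqref{est_timetranslates_phi_df}); space translates are needed only for $\Pi^m_{\D_p}s^\alpha_m$ itself, where the chain rule and the GD compactness apply. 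Division by a strictly positive quantity occurs only in the fracture, where $d_{f,\D_\bu}\geq d_0>0$ locally and $d_{f,\D_\bu}$ genuinely converges strongly.

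A second, related gap: you attribute the strong convergence $d_{f,\D_\bu^l}\rightarrow\bar d_f$ in $L^\infty(0,T;L^p(\Gamma))$, $p<4$, solely to the spatial compactness of $(\D_\bu^l)$ applied to $\jump{\bu^l}_{\D_\bu}$. Spatial translate estimates give compactness at fixed times only; a uniform-in-time statement also requires time equicontinuity, which cannot come from the quasi-static mechanics and in the paper comes from the \emph{flow} equations: the time-translate estimates summed over the two phases (using $\sum_\alpha s^\alpha_f=1$), the discontinuous Ascoli--Arzel\`a theorem yielding uniform-in-time $L^2$-weak convergence (Proposition \ref{prop_uniftimeweakL2_dfsf}), and then Lemma \ref{lemme2} combining both ingredients, followed by interpolation with the $L^\infty(0,T;L^4(\Gamma))$ bound. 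You do invoke time-translate machinery later for the saturations, but as written the aperture step is unsupported, and it is needed \emph{before} the fracture saturation argument since the latter divides by $d_{f,\D_\bu}$.
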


\begin{remark}[Discrete porosity and fracture aperture]
  As mentioned in the introduction, the assumptions that the discrete porosity and fracture aperture remain bounded below is a requirement coming from the model itself (which does not account for possible contact). It is not a fundamental restriction of the numerical framework and analysis.
\end{remark}

We first present in Subsections \ref{subsec:apriori} and \ref{subsec:compactness} a sequence of intermediate results that will be useful for the proof of Theorem \ref{thm.conv} detailed in Subsection~\ref{subsec:convergence}.

\begin{remark}[Incompressible limit for the solid matrix]
The above convergence result also holds when ${1}/{M}=0$, i.e., in the incompressible limit for the grains of the solid matrix ($M\to+\infty$). Indeed, in this case, Lemma~\ref{lemma_apriori} below does not ensure $L^\infty(L^2)$-boundedness of the reconstructed matrix equivalent pressure. Nevertheless, $L^2(L^2)$-boundedness for this quantity (needed in the proof of the above theorem, cf.~Subsection~\ref{subsec:convergence}) can be readily inferred, based on the $L^2(L^2)$-boundedness of the reconstructed phase pressures (resulting from Lemma~\ref{lemma_apriori}), the fact that reconstructed saturations are bounded, and the definition~\eqref{eq.capillary_energy} of the capillary energy density.
\end{remark}

\subsection{Energy estimates}\label{subsec:apriori}
Using the phase pressures and velocity (time derivative of the displacement field) as test functions, the following \emph{a priori} estimates can be inferred.
\begin{lemma}[A priori estimates]   
  \label{lemma_apriori}
  Let $p^\alpha,\bu$ be a solution to problem~\eqref{eq:GS} such that 
  \begin{itemize}
\item[(i)] $d_{f,\D_\bu}(t,\x) \geq d_0(\x)$ for a.e.\ $(t,\x) \in (0,T)\times \Gamma$, 
\item[(ii)] $\phi_{\D}(t,\x) \geq  \phi_{m,\rm min}$ for a.e.\ $(t,\x) \in (0,T)\times \Omega$,
where $\phi_{m,\rm min}>0$ is a constant.
  \end{itemize}
Under hypotheses \emph{\ref{first.hyp}--\ref{last.hyp}}, there exists a real number $C > 0$ depending on the data, the coercivity constants $C_{\D_p}$, $C_{\D_\bu}$, and $\phi_{m,\rm min}$, such that the following estimates hold: 
\begin{equation}\label{apriori.est}
\begin{aligned}
\|\grad_{\D_p}^m p^\alpha\|_{L^2((0,T)\times\Omega)} \le C, 
&\quad& \| d_{f,\D_\bu}^{\nf 3 2} \grad_{\D_p}^f p^\alpha\|_{L^2((0,T)\times\Gamma)} \le C, \\
\|U_m(\Pi^m_{\D_p}p_c)\|_{L^\infty(0,T;L^1(\Omega))} \le C,
&\quad&
\|d_0 U_f(\Pi^f_{\D_p}p_c)\|_{L^\infty(0,T;L^1(\Gamma))} \le C,\\
\|\Pi^m_{\D_p} p^E_m\|_{L^\infty(0,T;L^2(\Omega))} \le C, 
&\quad&
\| \bbeps_{\D_\bu}(\bu)\|_{L^\infty(0,T;L^2(\Omega,\S_d(\R)))} \le C,\\
\|d_{f,\D_\bu}\|_{L^\infty(0,T;L^4(\Gamma))}\le C.
\end{aligned}
\end{equation}
\end{lemma}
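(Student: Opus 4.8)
The plan is to derive a discrete energy balance by testing the flow equations \eqref{GD_hydro} with the phase pressures $\varphi^\alpha=p^\alpha$, summing over $\alpha\in\{\g,\l\}$, and testing the mechanics \eqref{GD_meca} with the discrete velocity $\bv=\delta_t\bu$, then adding the two. Since all discrete unknowns are piecewise constant in time and the mechanical relation holds for arbitrary test functions on each time slice, these manipulations reduce to per-time-step identities that I would sum over $n\in\{0,\dots,N-1\}$. First I would isolate the two \emph{dissipative} contributions of the flow, namely $\int_\Omega\eta_m^\alpha(\Pi_{\D_p}^m s_m^\alpha)\K_m\nabla_{\D_p}^m p^\alpha\cdot\nabla_{\D_p}^m p^\alpha$ and $\int_\Gamma\eta_f^\alpha(\Pi_{\D_p}^f s_f^\alpha)\tfrac{d_{f,\D_\bu}^3}{12}\nabla_{\D_p}^f p^\alpha\cdot\nabla_{\D_p}^f p^\alpha$; by the mobility lower bounds \ref{first.hyp} and the ellipticity of $\K_m$ (hypothesis \ref{last.hyp}), these are bounded below by $c\bigl(\|\nabla_{\D_p}^m p^\alpha\|_{L^2}^2+\|d_{f,\D_\bu}^{\nf32}\nabla_{\D_p}^f p^\alpha\|_{L^2}^2\bigr)$, which will produce the first two estimates once everything else is controlled.

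The heart of the argument is the treatment of the accumulation terms. Writing $\phi^{n+1}s^{\alpha,n+1}-\phi^{n}s^{\alpha,n}=\phi^{n}(s^{\alpha,n+1}-s^{\alpha,n})+(\phi^{n+1}-\phi^{n})s^{\alpha,n+1}$ and summing over $\alpha$, I would use the identity $\Pi_{\D_p}^m p_m^E=\sum_\alpha \Pi_{\D_p}^m p^\alpha\,\Pi_{\D_p}^m s_m^\alpha-U_m(\Pi_{\D_p}^m p_c)$ (valid pointwise by the piecewise-constant property) together with the secant inequality $\sum_\alpha p^{\alpha,n+1}(S_m^\alpha(p_c^{n+1})-S_m^\alpha(p_c^n))\ge U_m(p_c^{n+1})-U_m(p_c^n)$. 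This inequality follows from $S_m^\l=1-S_m^\g$ and the monotonicity of $S_m^\g$ (it reduces to $\int_{p_c^n}^{p_c^{n+1}}(S_m^\g(z)-S_m^\g(p_c^n))\,\d z\ge0$) and encodes precisely the reason the Coussy equivalent pressure is the correct choice. Using $\phi_\D\ge\phi_{m,\rm min}>0$, the matrix accumulation is then bounded below by $(\phi^{n+1}U_m^{n+1}-\phi^n U_m^n)+(\phi^{n+1}-\phi^n)\Pi_{\D_p}^m p_m^{E,n+1}$, the first piece telescoping to a nonnegative capillary-energy term at the final time; I would run the identical computation on the fracture, with $d_{f,\D_\bu}\ge d_0$ playing the role of $\phi_\D$.

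Next I would resolve the poromechanical coupling. Substituting the porosity closure $\phi_\D^{n+1}-\phi_\D^n=b\,\div_{\D_\bu}(\bu^{n+1}-\bu^n)+\tfrac1M\Pi_{\D_p}^m(p_m^{E,n+1}-p_m^{E,n})$ into the leftover $(\phi^{n+1}-\phi^n)\Pi_{\D_p}^m p_m^{E,n+1}$ produces a term $b\,\Pi_{\D_p}^m p_m^{E,n+1}\,\div_{\D_\bu}(\bu^{n+1}-\bu^n)$ that cancels exactly against the coupling term $-b\,\Pi_{\D_p}^m p_m^E\,\div_{\D_\bu}(\bv)$ of the mechanics tested with $\bv=\bu^{n+1}-\bu^n$, plus a term $\tfrac1M\Pi_{\D_p}^m p_m^{E,n+1}(\Pi_{\D_p}^m p_m^{E,n+1}-\Pi_{\D_p}^m p_m^{E,n})\ge\tfrac1{2M}\bigl((\Pi_{\D_p}^m p_m^{E,n+1})^2-(\Pi_{\D_p}^m p_m^{E,n})^2\bigr)$ yielding, after telescoping, the $L^\infty(L^2)$ bound on $\Pi_{\D_p}^m p_m^E$. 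The fracture coupling cancels identically via $d_{f,\D_\bu}=-\jump{\bu}_{\D_\bu}$ against $\int_\Gamma\Pi_{\D_p}^f p_f^E\,\jump{\bv}_{\D_\bu}$. On the mechanics side, positivity of the elastic bilinear form gives $\bbsig_{\D_\bu}(\bu^{n+1}):\bbeps_{\D_\bu}(\bu^{n+1}-\bu^n)\ge\tfrac12\bigl(\bbsig_{\D_\bu}(\bu^{n+1}):\bbeps_{\D_\bu}(\bu^{n+1})-\bbsig_{\D_\bu}(\bu^n):\bbeps_{\D_\bu}(\bu^n)\bigr)$, telescoping to the $L^\infty(L^2)$ bound on $\bbeps_{\D_\bu}(\bu)$.

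It remains to control the source terms and to read off the last two estimates. The flow right-hand sides are bounded by $\|h_{\rm rt}^\alpha\|_{L^2}\|\Pi_{\D_p}^{\rm rt}p^\alpha\|_{L^2}$ and absorbed through the coercivity constant $C_{\D_p}$ (using $\|d_0^{\nf32}\nabla_{\D_p}^f p^\alpha\|_{L^2}\le\|d_{f,\D_\bu}^{\nf32}\nabla_{\D_p}^f p^\alpha\|_{L^2}$) and Young's inequality into the dissipation terms; the time-independent body force telescopes to $\int_\Omega\mathbf{f}\cdot\Pi_{\D_\bu}\bu^N$, bounded by $\|\mathbf f\|_{L^2}C_{\D_\bu}\|\bbeps_{\D_\bu}(\bu^N)\|_{L^2}$ and absorbed into the elastic energy. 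The lower bounds $\phi_\D\ge\phi_{m,\rm min}$ and $d_{f,\D_\bu}\ge d_0$ then convert the capillary-energy bounds into $\|U_m(\Pi_{\D_p}^m p_c)\|_{L^\infty(L^1)}$ and $\|d_0 U_f(\Pi_{\D_p}^f p_c)\|_{L^\infty(L^1)}$ (using $U_{\rm rt}\ge0$), while the $L^\infty(L^4)$ bound on $d_{f,\D_\bu}=-\jump{\bu}_{\D_\bu}$ is immediate from the mechanics coercivity \eqref{def_CDmeca} and the estimate on $\bbeps_{\D_\bu}(\bu)$. The initial quantities $\phi^0 U_m^0$, $d_f^0 U_f^0$, $(\Pi_{\D_p}^m p_m^{E,0})^2$ and the initial elastic energy are bounded uniformly in $l$ by hypothesis \ref{H2}, the boundedness of the initial data, and the well-posedness of the initial mechanics problem. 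The main obstacle is the accumulation step: establishing the discrete secant inequality for $U_{\rm rt}$ with the \emph{non-constant} weights $\phi^n$ and $d_f^n$, and checking that the porosity and aperture closure laws make the poromechanical coupling terms cancel exactly—this is exactly where the lower bounds (i)–(ii) and the Coussy definition of the equivalent pressure are indispensable.
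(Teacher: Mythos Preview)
Your proposal is correct and follows essentially the same approach as the paper: test \eqref{GD_hydro} with $\varphi^\alpha=p^\alpha$, test \eqref{GD_meca} with $\bv=\delta_t\bu$, exploit the discrete product rule and the secant inequality for $U_{\rm rt}$ (which the paper writes as \eqref{eq.der_cap_en}), cancel the poromechanical coupling via the closure laws, and absorb the sources by coercivity and Young's inequality. One small correction: the body-force term telescopes to $\int_\Omega\mathbf f\cdot(\Pi_{\D_\bu}\bu^N-\Pi_{\D_\bu}\bu^0)$, not just $\int_\Omega\mathbf f\cdot\Pi_{\D_\bu}\bu^N$, so the initial elastic energy term enters here as well as from the telescoped energy---this changes nothing in substance since you already bound $\|\bbeps_{\D_\bu}(\bu^0)\|_{L^2}$ via the initial mechanics problem.
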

\begin{proof}
For a piecewise constant function $v$ 
on~$[0,T]$ with $v(t)= v_{n+1}$ for all $t \in (t_n,t_{n+1}]$, $n\in\{0,\ldots,N-1\}$, and the initial value $v(0) = v_0$, we define the piecewise constant function $\hat v$ such that $\hat v(t) = v_n$ for all $t \in (t_n,t_{n+1}]$. We notice the following expression for the discrete derivative of the product of two such functions: 
\begin{equation}\label{der.prod}
\delta_t(u v)(t) = \hat u(t)\delta_t v(t) + v(t)\delta_t u(t).
\end{equation}
In \eqref{GD_hydro}, upon choosing $\varphi^\alpha = p^\alpha$ we obtain
$T_1+T_2+T_3+T_4 = T_5 + T_6$, with
\begin{equation}\label{first.eq}
 \hspace{-.1cm}
\begin{array}{llll}
 T_1 = \dsp \int_0^T \int_\Omega \delta_t \(\phi_\D \Pi_{\D_p}^m s^\alpha_m \)\Pi_{\D_p}^m p^\alpha \d\x \d t, \quad & T_2 = \dsp \int_0^T \int_\Omega \eta_m^\alpha(\Pi_{\D_p}^m s_m^\alpha) \K_m \nabla_{\D_p}^m p^\alpha \cdot   \nabla_{\D_p}^m p^\alpha \d\x \d t,\\[2ex]
   T_3 = \dsp \int_0^T \int_\Gamma \delta_t \(d_{f,\D_\bu} \Pi_{\D_p}^f s^\alpha_f \)\Pi_{\D_p}^f p^\alpha \d\sigma(\x)\d t, \quad & T_4 = \dsp \int_0^T \int_\Gamma  \eta_f^\alpha(\Pi_{\D_p}^f s_f^\alpha) {d_{f,\D_\bu}^3 \over 12} \nabla_{\D_p}^f p^\alpha \cdot   \nabla_{\D_p}^f p^\alpha  \d\sigma(\x) \d t,\\[2ex]
   T_5 = \dsp \int_0^T \int_\Omega h_m^\alpha \Pi_{\D_p}^m p^\alpha \d\x \d t,
  \quad & T_6 = \dsp \int_0^T \int_\Gamma h_f^\alpha \Pi_{\D_p}^f p^\alpha \d\sigma(\x)\d t.
 \end{array}
\end{equation}

First, we focus on the matrix and fracture accumulation terms $T_1$ and $T_3$, respectively. Using~\eqref{der.prod} and the piecewise constant function reconstruction property of $\Pi^\rt_{\D_p}$, $\rt\in\{m,f\}$, we can write 
$$
\begin{aligned}
\delta_t (\phi_\D S^\alpha_m(\Pi^m_{\D_p} p_c)) & = 
\hat\phi_\D \delta_t S^\alpha_m(\Pi^m_{\D_p} p_c) + S^\alpha_m(\Pi^m_{\D_p} p_c)
\delta_t \phi_\D,\\
\delta_t (d_{f,\D_\bu} S^\alpha_f(\Pi^f_{\D_p} p_c)) & = 
\hat d_{f,\D_\bu} \delta_t S^\alpha_f(\Pi^f_{\D_p} p_c) + S^\alpha_f(\Pi^f_{\D_p} p_c)
\delta_t d_{f,\D_\bu}.
\end{aligned}
$$
Summing on $\alpha\in\{\l,\g\}$, we obtain
$$\begin{aligned}
\sum_\alpha (T_1+T_3) 
= & \sum_\alpha \( \int_0^T \int_\Omega \hat\phi_\D \Pi^m_{\D_p} p^\alpha\,
 \delta_t S^\alpha_m(\Pi^m_{\D_p} p_c) \d\x \d t +
 \int_0^T \int_\Omega S^\alpha_m(\Pi^m_{\D_p} p_c) \Pi^m_{\D_p} p^\alpha \,\delta_t \phi_\D \d\x \d t \\
 &\hspace{-.3cm}+ \int_0^T \int_\Gamma \hat d_{f,\D_\bu} \Pi^f_{\D_p} p^\alpha\,
 \delta_t S^\alpha_f(\Pi^f_{\D_p} p_c) \d\sigma(\x) \d t+
 \int_0^T \int_\Gamma S^\alpha_f(\Pi^f_{\D_p} p_c) \Pi^f_{\D_p} p^\alpha\,
 \delta_t d_{f,\D_\bu} \d\sigma(\x) \d t \).
 \end{aligned}
$$
Now, for $\rt\in\{m,f\}$,
\begin{equation}\label{eq.der_cap_en}
\sum_\alpha \Pi^\rt_{\D_p} p^\alpha\,
 \delta_t S^\alpha_\rt(\Pi^\rt_{\D_p} p_c) = \Pi^\rt_{\D_p} p_c\,
 \delta_t S^\g_\rt(\Pi^\rt_{\D_p} p_c) \ge \delta_t U_\rt(\Pi^\rt_{\D_p} p_c).
 \end{equation}
Indeed, for $n\in\{0,\dots,N-1\}$, by the definition~\eqref{eq.capillary_energy}
of the capillary energy $U_\rt$ and letting \mbox{$\pi^{\rt}_{c,n}=\Pi^\rt_{\D_p}p_{c,n}$}, we have
$$
\begin{aligned}
\pi^\rt_{c,n+1}(S_{\rt}^\g(\pi^\rt_{c,n+1})-S_{\rt}^\g(\pi^\rt_{c,n})) & = U_\rt(\pi^\rt_{c,n+1}) - U_\rt(\pi^\rt_{c,n}) + \int_{\pi^\rt_{c,n}}^{\pi^{\rt}_{c,n+1}} (S_\rt^\g(q) - S_\rt^\g(\pi^\rt_{c,n}))\d q \\
& \ge U_\rt(\pi^\rt_{c,n+1}) - U_\rt(\pi^\rt_{c,n}),
\end{aligned}$$
where the last inequality holds since $S_\rt^\g$ is a non-decreasing function (see \ref{H2}). Thus, we obtain
$$\begin{aligned}
\sum_\alpha (T_1+T_3) 
\ge & \int_0^T \int_\Omega \hat\phi_\D \delta_t U_m(\Pi^m_{\D_p} p_c) \d\x \d t
+\int_0^T \int_\Gamma \hat d_{f,\D_\bu} \delta_t U_f(\Pi^f_{\D_p} p_c) \d\sigma(\x)\d t \\
&\!\! +\sum_\alpha\( \int_0^T \int_\Omega S^\alpha_m(\Pi^m_{\D_p} p_c) \Pi^m_{\D_p} p^\alpha \, \delta_t \phi_\D \d\x \d t + \int_0^T \int_\Gamma S^\alpha_f(\Pi^f_{\D_p} p_c) \Pi^f_{\D_p} p^\alpha \,
 \delta_t d_{f,\D_\bu} \d\sigma(\x)\d t \).
 \end{aligned}
$$
Applying again~\eqref{der.prod}, we have 
$$
\begin{aligned}
\hat\phi_\D \delta_t U_m(\Pi^m_{\D_p} p_c) & = 
\delta_t(\phi_\D U_m(\Pi^m_{\D_p} p_c)) - U_m(\Pi^m_{\D_p} p_c) \delta_t\phi_\D,\\
\hat d_{f,\D_\bu} \delta_t U_f(\Pi^f_{\D_p} p_c) & = \delta_t(d_{f,\D_\bu} U_f(\Pi^f_{\D_p} p_c))
- U_f(\Pi^f_{\D_p} p_c) \delta_t d_{f,\D_\bu}.
\end{aligned}
$$
In the light of the closure equations~\eqref{GD_closures}, this allows us to infer that
\begin{equation}\label{lhs_accumulation}
\begin{aligned}
\sum_\alpha (T_1+T_3) \ge 
& \int_0^T\int_\Omega \delta_t(\phi_\D U_m(\Pi^m_{\D_p} p_c)) 
\d\x \d t + \int_0^T \int_\Gamma \delta_t (d_{f,\D_\bu} U_f(\Pi^f_{\D_p} p_c)) \d\sigma(\mathbf x) \d t \\
& + \int_0^T \int_\Omega \frac{1}{2M} \delta_t\(\Pi^m_{\D_p} p^E_m\)^2 \d\x \d t + 
\int_0^T \int_\Omega b\, \Pi^m_{\D_p} p^E_m \, \div_{\D_\bu} (\delta_t \bu) \d\x \d t \\
& - \int_0^T \int_\Gamma \Pi^f_{\D_p} p^E_f\, \jump{\delta_t \bu}_{\D_\bu} \d\sigma(\x) \d t,
\end{aligned}
\end{equation}
where we have used the fact that 
\begin{equation}\label{eq:vdtv}
v\delta_t v \ge \delta_t\left(\frac{v^2}{2}\right)
\end{equation}
for $v$ piecewise constant
on~$[0,T]$. Then, taking into account assumptions~\ref{first.hyp}--\ref{last.hyp}, there exists a real number $C > 0$ depending only on the data such that
\begin{equation}\label{lhs_diffusion}
\begin{aligned}
\sum_\alpha (T_2+T_4) \ge C \( \int_0^T \int_\Omega \sum_\alpha |\grad_{\D_p}^m p^\alpha|^2 \d\x \d t 
+  \int_0^T \int_\Gamma \sum_\alpha |d_{f,\D_\bu}^{\nf 3 2} \grad^f_{\D_p} p^\alpha|^2 \d\sigma(\x) \d t \).
\end{aligned}
\end{equation}

On the other hand, upon choosing $\bv = \delta_t\bu$ in \eqref{GD_meca}, we get
$T_7+T_8+T_9=T_{10}$, with
\begin{equation}\label{second.eq}
 \hspace{-.1cm}
 \begin{array}{llll}
 T_7 = \dsp \int_0^T \int_\Omega \bbsig_{\D_\bu}(\bu) : \bbeps_{\D_\bu}(\delta_t \bu) \d\x \d t,  \quad 
 & \ \, T_8 = - \dsp \int_0^T \int_\Omega b\, \Pi_{\D_p}^m p_m^E\,  \div_{\D_\bu}(\delta_t \bu) \d\x \d t\,\\[2ex]
   T_9 = \dsp \int_0^T \int_\Gamma \Pi_{\D_p}^f p_f^E \, \jump{\delta_t \bu}_{\D_\bu} \d\sigma(\x) \d t, \quad 
   & T_{10} = \dsp \int_0^T \int_\Omega \mathbf{f} \cdot \Pi_{\D_\bu} (\delta_t \bu) \d\x \d t.
 \end{array}
\end{equation}
Using \eqref{eq:vdtv} and developing the definition of $\bbsig_{\D_\bu}(\bu)$, we see that
\begin{equation}\label{est_elastic_energy}
T_7 \ge  \int_0^T \int_\Omega  \delta_t \( \frac{1}{2} \bbsig_{\D_\bu}(\bu) : \bbeps_{\D_\bu}(\bu) \) \d\x \d t,
\end{equation}
so that, all in all, taking into account that
$\sum_\alpha (T_1+T_2+T_3+T_4) + T_7 + T_8 + T_9 = \sum_\alpha(T_5 + T_6) + T_{10}$
and inequalities \eqref{lhs_accumulation}--\eqref{lhs_diffusion}--\eqref{est_elastic_energy},
we obtain the following estimate
for the solutions of \eqref{eq:GS}: there is a real number $C > 0$ depending on the data such that
\begin{equation}\label{GD_energy_estimate}
\begin{aligned}
&\int_0^T \int_\Omega \delta_t(\phi_\D U_m(\Pi^m_{\D_p} p_c)) \, \d\mathbf x \d t
+ \int_0^T \int_\Gamma \delta_t (d_{f,\D_\bu} U_f(\Pi^f_{\D_p} p_c)) \, \d\sigma(\mathbf x) \d t \\
& + \int_0^T \int_\Omega \delta_t\left(\frac{1}{2}\bbsigma_{\D_\bu}(\bu):\bbeps_{\D_\bu}(\bu) 
+ \frac{1}{2M}(\Pi^m_{\D_p}p^E_m)^2\right)\, \d\x \d t  \\
&+\sum_{\alpha} \int_0^T\int_\Omega |\grad^m_{\D_p}p^\alpha|^2 \, \d\mathbf x \d t + 
\sum_{\alpha} \int_0^T\int_\Gamma | d_{f,\D_\bu}^{\nf 3 2} \grad^f_{\D_p}p^\alpha |^2 \, \d\sigma(\mathbf x) \d t\\
& \le C\left( \int_0^T \int_\Omega \mathbf f \cdot \delta_t \Pi_{\D_\bu} \bu \, \d\mathbf x \d t +
\sum_{\alpha} \int_0^T \int_\Omega h_m^\alpha \Pi^m_{\D_p}p^\alpha\,\d\mathbf x \d t \right. \\ 
& \qquad + \left. \sum_{\alpha} \int_0^T \int_\Gamma h_f^\alpha \Pi^f_{\D_p}p^\alpha\,\d\sigma(\mathbf x) \d t \right).
\end{aligned}
\end{equation}
Now, we have
$$
\begin{alignedat}{1}
 \int_0^T \int_\Omega \mathbf f \cdot \delta_t \Pi_{\D_\bu} \bu \, \d\x \d t
   & = \int_\Omega \mathbf f \cdot (\Pi_{\D_\bu}\bu(T) - \mathbf f\cdot \Pi_{\D_\bu} \bu(0)) \d\x \\
    & \le  C_{\D_\bu} \|\mathbf f\|_{L^2(\Omega)}( \|\bbeps_{\D_\bu}(\bu)(T)\|_{L^2(\Omega,\S_d(\R))} + \|\bbeps_{\D_\bu}(\bu)(0)\|_{L^2(\Omega,\S_d(\R))}),
\end{alignedat}
$$
$$
\begin{alignedat}{1}
& \sum_{\alpha} \( \int_0^T \int_\Omega h_m^\alpha \Pi^m_{\D_p}p^\alpha\,d\mathbf x \d t  +  \int_0^T \int_\Gamma h_f^\alpha \Pi^f_{\D_p}p^\alpha\,d\sigma(\mathbf x) \d t \)
\\
& \le C_{\D_p} \sum_\alpha (\|h_m^\alpha\|_{L^2((0,T)\times\Omega)} + 
\|h_f^\alpha\|_{L^2((0,T)\times\Gamma)})(\|\grad^m_{\D_p} p^\alpha\|_{L^2(0,T;L^2(\Omega))}
+\| d_{f,\D_\bu}^{\nf 3 2} \grad^f_{\D_p} p^\alpha\|_{L^2(0,T;L^2(\Gamma))}),
\end{alignedat}
$$
where we have used the coercivity properties of the two gradient discretizations along
with the Cauchy--Schwarz inequality and $d_0\le d_{f,\D_\bu}$. Using Young's inequality in the last two estimates as well as hypotheses \ref{first.hyp}--\ref{last.hyp} and (ii) in the lemma, it is then possible to infer from~\eqref{GD_energy_estimate} the existence of a real number $C > 0$ depending on the data and on $\phi_{m,\rm{min}}$ such that
$$
\begin{multlined}
\| U_m(\Pi^m_{\D_p} p_c)(T)\|_{L^1(\Omega)} + \| d_0 U_f(\Pi^f_{\D_p} p_c)(T)\|_{L^1(\Omega)}
+ \|(\Pi^m_{\D_p} p^E_m)(T)\|_{L^2(\Omega)}^2 \\
+ \|\bbeps_{\D_\bu}(\bu)(T)\|^2_{L^2(\Omega,\S_d(\R))}
+ \sum_\alpha \( \|\grad^m_{\D_p}p^\alpha\|_{L^2(0,T;L^2(\Omega))}^2 +
\| d_{f,\D_\bu}^{\nf 3 2} \grad^f_{\D_p}p^\alpha\|_{L^2(0,T;L^2(\Gamma))}^2 \) \\
\le C \( \| \mathbf f \|_{L^2(\Omega)}^2 + 
\sum_\alpha \( \|h_m^\alpha\|_{L^2((0,T)\times\Omega)}^2 + 
\|h_f^\alpha\|_{L^2((0,T)\times\Gamma)}^2 \)  \\
+  \| U_m(\Pi^m_{\D_p} p_{c})(0)\|_{L^1(\Omega)} + \| d_{f,\D_\bu}(0) U_f(\Pi^f_{\D_p} p_{c})(0)\|_{L^1(\Gamma)}\\
+ \|(\Pi^m_{\D_p} p^E_m)(0)\|_{L^2(\Omega)}^2 + \|(\Pi^f_{\D_p} p^E_f)(0)\|_{L^2(\Gamma)}^2 \).
\end{multlined}
$$

The consistency property \eqref{eq:cons.stGD} shows that the terms above involving the discrete initial conditions are bounded and thus, together with the fact that $T$ can be replaced by any $t\in (0,T]$ in the left-hand side, this inequality yields the a priori estimates~\eqref{apriori.est} on $p^\alpha$, $p_c$, $p_m^E$ and $\bu$. The estimate on $d_{f,\D_\bu}$ follows from its definition and from the definition \eqref{def_CDmeca} of $C_{\D_\bu}$.
\end{proof}

\subsection{Compactness properties}\label{subsec:compactness}

Throughout the analysis, we write $a\lesssim b$ for $a\le Cb$ with constant $C$ depending only on the coercivity constants $C_{\D_p}$, $C_{\D_\bu}$ of the considered GDs, and on the physical parameters.

\subsubsection{Estimates on time translates}

\begin{proposition}\label{prop_timetranslates}
  Let $\D_p$, $\D_\bu$, $(t_n)_{n=0}^N$ be given space time GDs and $\phi_{m,{\rm min}} > 0$. It is assumed that the gradient scheme \eqref{GD_hydro}--\eqref{GD_meca} has a solution $p^\alpha\in (X_{\D_p}^0)^{N+1}$, $\alpha\in \{\g,\l\}$, $\bu \in (X_{\D_\bu}^0)^{N+1}$ such that
  $\phi_{\D}(t,\x) \geq \phi_{m,{\rm min}}$ for a.e.\ $(t,\x) \in (0,T)\times \Omega$ and $d_{f,\D_\bu}(t,\x) \geq d_0(\x)$ for a.e.\ $(t,\x) \in (0,T)\times \Gamma$. 
  Let $\tau,\tau'  \in (0,T)$ and,  for $s\in (0,T]$, denote by $n_s$ the natural number such that $s\in (t_{n_s},t_{n_s+1}]$. For any $\varphi \in X^0_{\D_p}$, it holds 
  \begin{equation}
    \label{est_timetranslates}
\begin{array}{ll}
& \Big| 
\dsp \< [\phi_\D \Pi^m_{\D_p} s^\alpha_m](\tau) - [\phi_\D \Pi^m_{\D_p} s^\alpha_m](\tau'), \Pi^m_{\D_p} \varphi \>_{L^2(\Omega)} \\\\
& \qquad +\,\,   
\dsp \< [d_{f,\D_{\bu}} \Pi^f_{\D_p} s^\alpha_f](\tau) - [d_{f,\D_{\bu}} \Pi^f_{\D_p}s^\alpha_f](\tau'), \Pi^f_{\D_p} \varphi \>_{L^2(\Gamma)} \Big| \\\\
& \lsim  \dsp 
\sum_{n = n_{\tau}+1}^{n_{\tau'}}
 \dtn  \left( 
 \xi^{(1),\alpha,n+1}_m  \| \nabla^m_{\D_p} \varphi \|_{L^2(\Omega)} +
 \xi^{(1),\alpha,n+1}_f \| \nabla^f_{\D_p} \varphi \|_{L^8(\Gamma)} \right.\\
 & \qquad\qquad\qquad\qquad + \left. \xi^{(2),\alpha,n+1}_m  \| \Pi^m_{\D_p} \varphi \|_{L^2(\Omega)} +
 \xi^{(2),\alpha,n+1}_f \| \Pi^f_{\D_p} \varphi \|_{L^2(\Gamma)} 
 \right),
\end{array}
\end{equation}
with 
$$
\sum^{N-1}_{n = 0} \dtn \left( \xi^{(j),\alpha,n+1}_{\rm rt} \right)^2 \lsim 1 \qquad \mbox{for } \rt \in \{m,f\}, \, j\in \{1,2\},  
$$
and  
\begin{align*}
& \xi^{(1),\alpha,n+1}_m  = \| \nabla^m_{\D_p} p^{\alpha}_{n+1} \|_{L^2(\Omega)} \quad
  \mbox{ and }\quad   \xi^{(1),\alpha,n+1}_f = \| (d^{n+1}_{f,\D_{\bu}})^{\nf 3 2} \nabla^f_{\D_p} p^{\alpha}_{n+1} \|_{L^2(\Gamma)}  \| d^{n+1}_{f,\D_{\bu}} \|^{\nf 3 2}_{L^4(\Gamma)},\\
  & \xi^{(2),\alpha,n+1}_m  = \Big\| {1\over \dtn}\int_{t_n}^{t_{n+1}} h^{\alpha}_m(t,\cdot)\d t \Big\|_{L^2(\Omega)} \quad
  \xi^{(2),\alpha,n+1}_f  = \Big\| {1\over \dtn}\int_{t_n}^{t_{n+1}} h^{\alpha}_f(t,\cdot)\d t \Big\|_{L^2(\Gamma)}. 
\end{align*}
\end{proposition}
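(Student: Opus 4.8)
The plan is to express the difference of accumulation terms directly through the scheme \eqref{GD_hydro}, tested with a function that is piecewise constant in time. Assume without loss of generality that $\tau<\tau'$, so that $n_\tau\le n_{\tau'}$ (the left-hand side of \eqref{est_timetranslates} carries an absolute value and is symmetric in $(\tau,\tau')$). For the fixed spatial unknown $\varphi\in X^0_{\D_p}$, I would insert into \eqref{GD_hydro} the space-time test function $\varphi^\alpha$ whose value on $(t_n,t_{n+1}]$ equals $\varphi$ for $n_\tau+1\le n\le n_{\tau'}$ and $0$ otherwise. Since all the discrete fields are piecewise constant in time, the two accumulation integrals telescope: using $\sum_{n=n_\tau+1}^{n_{\tau'}}\dtn\,\delta_t[\,\cdot\,]_{n+1}=[\,\cdot\,]_{n_{\tau'}+1}-[\,\cdot\,]_{n_\tau+1}=[\,\cdot\,](\tau')-[\,\cdot\,](\tau)$, they reproduce exactly (up to sign) the bracket on the left-hand side of \eqref{est_timetranslates}. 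The scheme then identifies this bracket with the sum over $n_\tau+1\le n\le n_{\tau'}$ of $-(\text{diffusion})+(\text{source})$ evaluated at step $n+1$, the source being replaced by its time average $\frac{1}{\dtn}\int_{t_n}^{t_{n+1}}h^\alpha_{\rm rt}\,\d t$ over each interval. It then suffices to bound each of the four resulting families of terms.

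The matrix diffusion term is controlled by the uniform bounds on $\eta^\alpha_m$ (hypothesis \ref{first.hyp}) and on $\K_m$ (hypothesis \ref{last.hyp}) together with Cauchy--Schwarz, which yields $\xi^{(1),\alpha,n+1}_m\|\nabla^m_{\D_p}\varphi\|_{L^2(\Omega)}$ with $\xi^{(1),\alpha,n+1}_m=\|\nabla^m_{\D_p}p^\alpha_{n+1}\|_{L^2(\Omega)}$. The two source terms are handled by Cauchy--Schwarz alone, producing the $\xi^{(2)}$ contributions with the time-averaged right-hand sides in $L^2$.

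The delicate term is the fracture diffusion one, which carries the weight $d_{f,\D_\bu}^3$ and is responsible for the unusual $L^8(\Gamma)$ norm on $\nabla^f_{\D_p}\varphi$. The idea is to split $d_{f,\D_\bu}^3=d_{f,\D_\bu}^{\nf 3 2}\cdot d_{f,\D_\bu}^{\nf 3 2}$, attach one factor to each gradient, bound $\eta^\alpha_f$ by its upper bound, and apply Cauchy--Schwarz:
$$
\left|\int_\Gamma \eta^\alpha_f(\cdot)\,\frac{d_{f,\D_\bu}^3}{12}\,\nabla^f_{\D_p}p^\alpha_{n+1}\cdot\nabla^f_{\D_p}\varphi\,\d\sigma(\x)\right|\lesssim \|d_{f,\D_\bu}^{\nf 3 2}\nabla^f_{\D_p}p^\alpha_{n+1}\|_{L^2(\Gamma)}\,\|d_{f,\D_\bu}^{\nf 3 2}\nabla^f_{\D_p}\varphi\|_{L^2(\Gamma)}.
$$
For the second factor I would then apply Hölder's inequality to $\int_\Gamma d_{f,\D_\bu}^3|\nabla^f_{\D_p}\varphi|^2\,\d\sigma(\x)$ with the conjugate exponents $\nf 4 3$ and $4$: this places $d_{f,\D_\bu}^3$ in $L^{\nf 4 3}(\Gamma)$ (equivalently $d_{f,\D_\bu}$ in $L^4(\Gamma)$) and $|\nabla^f_{\D_p}\varphi|^2$ in $L^4(\Gamma)$, giving $\|d_{f,\D_\bu}^{\nf 3 2}\nabla^f_{\D_p}\varphi\|_{L^2(\Gamma)}\le\|d_{f,\D_\bu}\|_{L^4(\Gamma)}^{\nf 3 2}\|\nabla^f_{\D_p}\varphi\|_{L^8(\Gamma)}$. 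Together these produce exactly $\xi^{(1),\alpha,n+1}_f\|\nabla^f_{\D_p}\varphi\|_{L^8(\Gamma)}$, and establish \eqref{est_timetranslates}.

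Finally, for the square-summability $\sum_n\dtn(\xi^{(j),\alpha,n+1}_{\rm rt})^2\lesssim 1$ I would invoke the a priori estimates of Lemma \ref{lemma_apriori}. For $j=1$ and ${\rm rt}=m$ this is precisely the $L^2((0,T)\times\Omega)$ bound on $\nabla^m_{\D_p}p^\alpha$; for $j=1$ and ${\rm rt}=f$ one factors out the uniform $L^\infty(0,T;L^4(\Gamma))$ bound on $d_{f,\D_\bu}$ (so that $\|d^{n+1}_{f,\D_\bu}\|_{L^4(\Gamma)}^{3}$ is controlled) and is left with the $L^2((0,T)\times\Gamma)$ bound on $d_{f,\D_\bu}^{\nf 3 2}\nabla^f_{\D_p}p^\alpha$. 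For $j=2$, Jensen's inequality on the time average gives $\dtn\|\frac{1}{\dtn}\int_{t_n}^{t_{n+1}}h^\alpha_{\rm rt}\,\d t\|_{L^2}^2\le\int_{t_n}^{t_{n+1}}\|h^\alpha_{\rm rt}\|_{L^2}^2\,\d t$, so the sum telescopes into $\|h^\alpha_{\rm rt}\|_{L^2}^2$, which is finite by the data assumptions \ref{first.hyp}--\ref{last.hyp}. I expect the fracture diffusion estimate to be the main obstacle: one must choose the weight splitting and the Hölder exponents so that the only available control on the aperture, namely the $L^4(\Gamma)$ bound from Lemma \ref{lemma_apriori}, suffices to close the estimate, and this is exactly what forces the $L^8(\Gamma)$ norm on the test gradient.
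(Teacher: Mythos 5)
Your proof is correct and follows essentially the same route as the paper's: both identify the jump of the accumulation terms between $\tau$ and $\tau'$ through the scheme \eqref{GD_hydro} (the paper bounds each per-time-step jump via the scheme and sums, while you test once with a time-localized test function and telescope first --- an equivalent reordering), and both close the estimate with the identical splitting $d_{f,\D_\bu}^3 = d_{f,\D_\bu}^{\nf 3 2}\cdot d_{f,\D_\bu}^{\nf 3 2}$, Cauchy--Schwarz, and the H\"older step $\|d_{f,\D_\bu}^{\nf 3 2}\nabla^f_{\D_p}\varphi\|_{L^2(\Gamma)}\le \|d_{f,\D_\bu}\|_{L^4(\Gamma)}^{\nf 3 2}\|\nabla^f_{\D_p}\varphi\|_{L^8(\Gamma)}$ (the paper's exponents $(8,8/3)$ are exactly your $(\nf 4 3,4)$ pairing after squaring). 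Your verification of the square-summability of the $\xi^{(j),\alpha,n+1}_{\rm rt}$ via Lemma \ref{lemma_apriori} and Jensen's inequality also matches the paper's argument.
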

\begin{proof} 
For any $\varphi \in X^0_{\D_p}$, writing the difference of piecewise-constant functions at times $\tau$ and $\tau'$ as the sum of their jumps between these two times, one has 
\begin{equation}\label{est_tt1}
  \begin{aligned}
\Big| 
 \< [\phi_\D {}& \Pi^m_{\D_p} s^\alpha_m](\tau) - [\phi_\D \Pi^m_{\D_p} s^\alpha_m](\tau'), \Pi^m_{\D_p} \varphi \>_{L^2(\Omega)} \\
& +
\< [d_{f,\D_{\bu}} \Pi^f_{\D_p} s^\alpha_f](\tau) - [d_{f,\D_{\bu}} \Pi^f_{\D_p}s^\alpha_f](\tau'), \Pi^f_{\D_p} \varphi \>_{L^2(\Gamma)} \Big| \\
 \leq{}& 
\sum_{n = n_{\tau}+1}^{n_{\tau'}}
  \dtn \Big| \< \delta_t [\phi_\D \Pi^m_{\D_p} s^\alpha_m](t_{n+1}), \Pi^m_{\D_p} \varphi \>_{L^2(\Omega)} + 
 \<  \delta_t  [d_{f,\D_{\bu}} \Pi^f_{\D_p} s^\alpha_f](t_{n+1}), \Pi^f_{\D_p} \varphi \>_{L^2(\Gamma)}
\Big|.
\end{aligned}
\end{equation}
From the gradient scheme discrete variational equation \eqref{GD_hydro}, we deduce that 
\begin{equation}\label{est_tt2}
\begin{aligned}
\Big| 
 \< \delta_t  [\phi_\D {}&\Pi^m_{\D_p} s^\alpha_m](t_{n+1}), \Pi^m_{\D_p} \varphi \>_{L^2(\Omega)} + 
\dsp \< \delta_t  [d_{f,\D_{\bu}} \Pi^f_{\D_p} s^\alpha_f](t_{n+1}), \Pi^f_{\D_p} \varphi \>_{L^2(\Gamma)} \Big| \\
 \lsim{}&
 \| \nabla^m_{\D_p} p^{\a}_{n+1} \|_{L^2(\Omega)} ~ \| \nabla^m_{\D_p} \varphi \|_{L^2(\Omega)}
 +
 \| (d^{n+1}_{f,\D_{\bu}})^{\nf 3 2} \nabla^f_{\D_p} p^{\a}_{n+1} \|_{L^2(\Gamma)} ~ \| (d^{n+1}_{f,\D_{\bu}})^{\nf 3 2} \nabla^f_{\D_p} \varphi \|_{L^2(\Gamma)}\\
 & + \Big\|{1\over \dtn} \int_{t_n}^{t_{n+1}} h^{\alpha}_m(t,\cdot)\d t \Big\|_{L^2(\Omega)} ~\| \Pi^m_{\D_p} \varphi \|_{L^2(\Omega)}\\
& + 
 \Big\| {1\over \dtn} \int_{t_n}^{t_{n+1}} h^{\alpha}_f(t,\cdot)\d t \Big\|_{L^2(\Gamma)} ~  \|\Pi^f_{\D_p} \varphi \|_{L^2(\Gamma)}\\
 \lsim{}&
 \xi^{(1),\alpha,n+1}_m  \| \nabla^m_{\D_p} \varphi \|_{L^2(\Omega)} + \xi^{(1),\alpha,n+1}_f \| \nabla^f_{\D_p} \varphi \|_{L^8(\Gamma)}\\
& + \, \xi^{(2),\alpha,n+1}_m  \| \Pi^m_{\D_p} \varphi \|_{L^2(\Omega)} + \xi^{(2),\alpha,n+1}_f \| \Pi^f_{\D_p} \varphi \|_{L^2(\Gamma)},
 \end{aligned}
\end{equation}
 where the term $\| (d^{n+1}_{f,\D_{\bu}})^{\nf 3 2} \nabla^f_{\D_p} \varphi \|_{L^2(\Gamma)}$ has been estimated using the generalized H\"older inequality with exponents $(8,8/3)$, which satisfy $\frac{1}{8}+\frac{3}{8}=\frac{1}{2}$.
Hence the result follows from \eqref{est_tt1}, \eqref{est_tt2}, the a priori estimates of Lemma \ref{lemma_apriori}, and from the assumptions $h_m^\alpha \in L^2((0,T)\times\Omega)$,
$h_f^\alpha \in L^2((0,T)\times\Gamma)$. 
\end{proof}

\begin{remark}
  Summing the estimate \eqref{est_timetranslates} on $\alpha\in \{\g,\l\}$ we obtain the following time translate estimates on $\phi_\D$ and $d_{f,\D_\bu}$:
  \begin{equation}
    \label{est_timetranslates_phi_df}
\begin{array}{ll}
& \Big| 
\dsp \< \phi_\D(\tau) - \phi_\D(\tau'), \Pi^m_{\D_p} \varphi \>_{L^2(\Omega)} + 
\< d_{f,\D_{\bu}}(\tau) - d_{f,\D_{\bu}}(\tau'), \Pi^f_{\D_p} \varphi \>_{L^2(\Gamma)} \Big| \\\\
& \lsim  \dsp 
\sum_{\alpha\in \{\g,\l\}}\sum_{n = n_{\tau}+1}^{n_{\tau'}}
 \dtn  \left( 
 \xi^{(1),\alpha,n+1}_m  \| \nabla^m_{\D_p} \varphi \|_{L^2(\Omega)} +
 \xi^{(1),\alpha,n+1}_f \| \nabla^f_{\D_p} \varphi \|_{L^8(\Gamma)} \right.\\
 & \qquad\qquad\qquad\qquad\qquad\qquad\quad  + \left. \xi^{(2),\alpha,n+1}_m  \| \Pi^m_{\D_p} \varphi \|_{L^2(\Omega)} +
 \xi^{(2),\alpha,n+1}_f \| \Pi^f_{\D_p} \varphi \|_{L^2(\Gamma)} 
 \right). 
\end{array}
\end{equation}
\end{remark}

\subsubsection{Compactness properties of $\Pi^m_{\D_p} s^\alpha_m$}
\label{subsec:compactnessmat}
\begin{proposition}
  \label{prop_compactness_Sm}
  Let $(\D_p^l)_{l\in\N}$, $(\D_\bu^l)_{l\in\N}$, $\{(t^l_n)_{n=0}^{N^l}\}_{l\in\N}$ be sequences of space time GDs assumed to satisfy the properties described in Section \ref{sec:prop}.
  Let $\phi_{m,{\rm min}} > 0$ and assume that, for each $l\in \N$, the gradient scheme \eqref{GD_hydro}--\eqref{GD_meca} has a solution $p^\alpha_l\in (X_{\D_p^l}^0)^{N^l+1}$, $\alpha\in \{\g,\l\}$, $\bu^l \in (X_{\D_\bu^l}^0)^{N^l+1}$ such that
\begin{itemize}
\item[(i)] $d_{f,\D_\bu^l}(t,\x) \geq d_0(\x)$ for a.e.\ $(t,\x) \in (0,T)\times \Gamma$, 
\item[(ii)] $\phi_{\D^l}(t,\x)\geq \phi_{m,{\rm min}}$ for a.e.\ $(t,\x) \in (0,T)\times \Omega$.
  \end{itemize}
  Then, the sequence $(\Pi^m_{\D_p}s_m^{\alpha,l})_{l\in \N}$,  with $s_m^{\alpha,l}= S_m^\alpha(p_c^l)$, is relatively compact in $L^2((0,T)\times \Omega)$. 
\end{proposition}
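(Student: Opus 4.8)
The plan is to obtain relative compactness of $(\Pi^m_{\D_p^l}s^{\alpha,l}_m)_{l\in\N}$, with $s^{\alpha,l}_m=S^\alpha_m(p_c^l)$, through the Kolmogorov--Riesz--Fréchet criterion, i.e.\ by estimating its space and time translates uniformly in $l$. First, the piecewise-constant property gives $\Pi^m_{\D_p^l}s^{\alpha,l}_m=S^\alpha_m(\Pi^m_{\D_p^l}p_c^l)$, so this sequence takes values in $[0,1]$ and is bounded in $L^\infty((0,T)\times\Omega)$. As noted after the compactness property, for such uniformly bounded sequences local and global compactness coincide: since $\overline\Gamma\cup\partial\Omega$ is $(d-1)$-dimensional, any $K_\delta=\{\x\in\Omega:\dist(\x,\overline\Gamma\cup\partial\Omega)\ge\delta\}$ has complement of measure $O(\delta)$ in $\Omega$, and $\int_{(0,T)\times(\Omega\setminus K_\delta)}|\Pi^m_{\D_p^l}s^{\alpha,l}_m|^2\le T\,|\Omega\setminus K_\delta|\to0$ uniformly in $l$. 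It therefore suffices to prove relative compactness in $L^2((0,T)\times K)$ for every compact $K\subset\Omega\setminus\overline\Gamma$, and then conclude by a diagonal argument over the increasing family $(K_{\delta_j})_j$.

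To control the space translates I would localise away from the fractures by means of the cut-off property: choosing a cut-off $\psi^l\in X^0_{\D_p^l}$ equal to $1$ on a compact neighbourhood of $K$, set $\widetilde v^l(t)=s^{\alpha,l}_m(t)\,\psi^l$. By the cut-off property $\nabla^f_{\D_p^l}\widetilde v^l=0$, so $\|\widetilde v^l(t)\|_{\D_p^l}=\|\nabla^m_{\D_p^l}\widetilde v^l(t)\|_{L^2(\Omega)}$, which the product- and chain-rule estimates bound by $C_P\big(|s^{\alpha,l}_m|_\infty C_K+|\psi^l|_\infty C_F\|\nabla^m_{\D_p^l}p_c^l(t)\|_{L^2(\Omega)}\big)$; Lemma~\ref{lemma_apriori} then yields $\int_0^T\|\widetilde v^l(t)\|_{\D_p^l}^2\,\d t\lesssim1$. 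Since $\Pi^m_{\D_p^l}\widetilde v^l=\Pi^m_{\D_p^l}s^{\alpha,l}_m$ on $K$, the translate form of the local compactness property (Remark~\ref{rem:compact.equivalent}, with $\eta=0$, for which the fracture term is void and the choice of $K_f$ immaterial) gives, for $|\xi|$ small,
\begin{equation*}
\Big(\int_0^T\|\Pi^m_{\D_p^l}s^{\alpha,l}_m(t,\cdot+\xi)-\Pi^m_{\D_p^l}s^{\alpha,l}_m(t,\cdot)\|_{L^2(K)}^2\,\d t\Big)^{1/2}\le\Big(\sup_{l}T_{\D_p^l,K,K_f}(\xi,0)\Big)\Big(\int_0^T\|\widetilde v^l(t)\|_{\D_p^l}^2\,\d t\Big)^{1/2},
\end{equation*}
which tends to $0$ as $\xi\to0$, uniformly in $l$. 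The key point is that the cut-off forces the fracture part of $\|\cdot\|_{\D_p^l}$ to vanish, so that no fracture analogue of the chain-rule estimate is needed.

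The time translates form the heart of the argument, because Proposition~\ref{prop_timetranslates} controls the \emph{accumulation} $A^l:=\phi_{\D^l}\Pi^m_{\D_p^l}s^{\alpha,l}_m$ (coupled with its fracture counterpart), not the saturation itself. For small $h>0$ and $t\in(0,T-h)$, write $P^\pm=\Pi^m_{\D_p^l}p_c^l(t+h),\Pi^m_{\D_p^l}p_c^l(t)$, $\phi^\pm=\phi_{\D^l}(t+h),\phi_{\D^l}(t)$, and $\delta s=S^\alpha_m(P^+)-S^\alpha_m(P^-)$. I would test the time-translate identity with $\varphi^l(t)=(s^{\alpha,l}_m(t+h)-s^{\alpha,l}_m(t))\,\psi^l$; by the cut-off property $\Pi^f_{\D_p^l}\varphi^l=\nabla^f_{\D_p^l}\varphi^l=0$, so all fracture contributions in Proposition~\ref{prop_timetranslates} disappear. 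Using the elementary identity
\begin{equation*}
\big(\phi^+S^\alpha_m(P^+)-\phi^-S^\alpha_m(P^-)\big)\,\delta s=\phi^+|\delta s|^2+(\phi^+-\phi^-)\,S^\alpha_m(P^-)\,\delta s,
\end{equation*}
together with $\phi^+\ge\phi_{m,\rm min}>0$ and $\Pi^m_{\D_p^l}\psi^l\ge0$ (equal to $1$ on $K$), and recalling $\Pi^m_{\D_p^l}\varphi^l=\delta s\,\Pi^m_{\D_p^l}\psi^l$, I obtain
\begin{equation*}
\phi_{m,\rm min}\int_0^{T-h}\!\!\int_K|\delta s|^2\,\d\x\,\d t\le\int_0^{T-h}\big|\big\langle A^l(t+h)-A^l(t),\Pi^m_{\D_p^l}\varphi^l\big\rangle_{L^2(\Omega)}\big|\,\d t+\int_0^{T-h}\big|\big\langle\phi^+-\phi^-,\Pi^m_{\D_p^l}\widehat\varphi^l\big\rangle_{L^2(\Omega)}\big|\,\d t,
\end{equation*}
where $\widehat\varphi^l(t)=s^{\alpha,l}_m(t)\,(s^{\alpha,l}_m(t+h)-s^{\alpha,l}_m(t))\,\psi^l$ satisfies $\Pi^f_{\D_p^l}\widehat\varphi^l=0$ and $\Pi^m_{\D_p^l}\widehat\varphi^l=S^\alpha_m(P^-)\,\delta s\,\Pi^m_{\D_p^l}\psi^l$. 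The first integral is bounded via Proposition~\ref{prop_timetranslates} and the second via its porosity analogue \eqref{est_timetranslates_phi_df}.

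In both right-hand integrals the test functions are bounded in the $|\cdot|_\infty$-seminorm (by $2$), so the product- and chain-rule estimates make $\|\nabla^m_{\D_p^l}\varphi^l(t)\|_{L^2(\Omega)}$, $\|\Pi^m_{\D_p^l}\varphi^l(t)\|_{L^2(\Omega)}$ (and their $\widehat\varphi^l$ counterparts) bounded in $L^2(0,T)$ uniformly in $l$, using $\int_0^T\|\nabla^m_{\D_p^l}p_c^l\|_{L^2(\Omega)}^2\lesssim1$. A standard Cauchy--Schwarz and Fubini manipulation of the double time-sum $\sum_{n=n_t+1}^{n_{t+h}}\dtn(\cdots)$, together with the bounds $\sum_n\dtn(\xi^{(j),\alpha,n+1}_{\rm rt})^2\lesssim1$, then bounds both integrals by $C(h+\Delta t^l)$, which tends to $0$ as $h\to0$ uniformly in $l$ once the finitely many indices $l$ with large $\Delta t^l$ are set aside (for those, compactness is automatic). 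Hence the time translates of $\Pi^m_{\D_p^l}s^{\alpha,l}_m$ are uniformly small on $K$. Combining with the space-translate estimate and the $L^\infty$-bound, the Kolmogorov--Riesz criterion yields relative compactness in $L^2((0,T)\times K)$, and the local-to-global reduction of the first paragraph concludes. I expect the time-translate step to be the main obstacle: it is where the nonlinearity $S^\alpha_m$ and the porosity weight are disentangled, which is possible only thanks to the monotonicity and Lipschitz continuity of $S^\alpha_m$ (assumption~\ref{H2}), the strictly positive lower bound $\phi_{m,\rm min}$, and the cut-off property, which reduces Proposition~\ref{prop_timetranslates} to a purely matrix estimate tested against bounded, saturation-valued functions.
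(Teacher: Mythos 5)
Your proposal is correct and follows essentially the same route as the paper's proof: localisation by the cut-off functions (which kills the fracture terms in Proposition~\ref{prop_timetranslates}), time translates obtained by testing the accumulation and porosity translate estimates \eqref{est_timetranslates}--\eqref{est_timetranslates_phi_df} against $(s^{\alpha}_m(t+h)-s^{\alpha}_m(t))\psi$ and its saturation-weighted companion via the same algebraic identity (the paper's $\chi^\alpha_m$ uses $s^\alpha_m(t+\tau)$ instead of $s^\alpha_m(t)$, a mirror-image variant), space translates from the chain/product rules and the local compactness property, and finally Fr\'echet--Kolmogorov plus the uniform bound $\Pi^m_{\D_p}s^\alpha_m\in[0,1]$ to pass from compacts $K\subset\Omega\setminus\overline\Gamma$ to all of $(0,T)\times\Omega$. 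Your ``Cauchy--Schwarz and Fubini manipulation'' of the double time sum is exactly the content of the lemma the paper cites for that step (\cite[Lemma 4.1]{brenner.hilhorst2013}), and your explicit handling of the $\Delta t^l$ term and of the local-to-global reduction only spells out details the paper leaves implicit.
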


\begin{proof}
Let $K$ be a fixed compact set of $\Omega\setminus\Gamma$ and let us consider cut-off functions $\psi^l$ as defined in the cut-off property of the sequence of spatial GDs $(\D_p^l)_{l\in\N}$.
The superscript $l\in \N$ will be dropped in the proof, and assumed to be large enough. All hidden constants in the following estimates are independent of $l$. 
Using that $\phi_{\D}(t,\x) \geq \phi_{m,{\rm min}}$ for a.e.\ $(t,\x) \in (0,T)\times \Omega$, the properties of the cut-off functions, and noting that $\Pi^m_{\D_p} s_m^{\alpha,l}=S_m^\alpha(\Pi^m_{\D_p} p_c^l)\in [0,1]$, we obtain
\begin{align*}
& \dsp \int_0^T \| \Pi^m_{\D_p} s^\alpha_m(\cdot + \tau,\cdot ) - \Pi^m_{\D_p} s^\alpha_m\|^2_{L^2(K)} \d t\\
&\lsim \dsp \tau + \int^{T-\tau}_0 \int_\O (\Pi^m_{\D_p} \psi) ~\phi_\D \left( \Pi^m_{\D_p} s^\alpha_m(\cdot + \tau,\cdot ) - \Pi^m_{\D_p} s^\alpha_m\right)^2 \d\x \d t =  \tau + T_1 + T_2, 
\end{align*}
where
\begin{align*}
& \dsp T_1 = \int^{T-\tau}_0 \Big| \<  [\phi_\D \Pi^m_{\D_p} s^\alpha_m]( t + \tau ) - [\phi_\D \Pi^m_{\D_p} s^\alpha_m](t),  \Pi^m_{\D_p} \zeta_m^\alpha(t) \>_{L^2(\Omega)} \Big| \d t, \\
& \dsp T_2 = \int^{T-\tau}_0 \Big| \<  \phi_\D( t + \tau ) - \phi_\D(t),  \Pi^m_{\D_p} \chi_m^\alpha(t) \>_{L^2(\Omega)} \Big| \d t,
\end{align*}
with 
$\zeta_m^\a(t) =  \( s^\alpha_m(t + \tau ) - s^\alpha_m(t) \) \psi$ and $\chi_m^\a(t) =  \zeta_m^\a(t)  ~s^\alpha_m(t+\tau)$. From the cut-off property it results that
$\Pi^f_{\D_p} \zeta_m^\alpha = 0$ and $\nabla^f_{\D_p} \zeta_m^\alpha = 0$.  
Then, in view of the estimates \eqref{est_timetranslates},  we have 
$$
\begin{aligned}
  T_1 &\lsim \dsp \int^{T-\tau}_0 \sum_{n = n_t+1}^{n_{(t+\tau)}} \dtn
  \( \xi^{(1),\alpha,n+1}_m \| \nabla^m_{\D_p} \zeta^\alpha_m (t)\|_{L^2(\Omega)}
 +  \xi^{(2),\alpha,n+1}_m \| \Pi^m_{\D_p} \zeta^\alpha_m (t)\|_{L^2(\Omega)}\) 
  ~\d t \\\\
&\lsim
 \dsp \int^{T-\tau}_0 \sum_{n = n_t+1}^{n_{(t+\tau)}} \dtn  \( (\xi^{(1),\alpha,n+1}_m)^2 + (\xi^{(2),\alpha,n+1}_m)^2 
+\, \| \nabla^m_{\D_p} \zeta^\alpha_m (t)\|^2_{L^2(\Omega)} + \| \Pi^m_{\D_p} \zeta^\alpha_m (t)\|_{L^2(\Omega)}^2
 \)~\d t.  
\end{aligned}
$$
From Proposition \ref{prop_timetranslates}, we have 
$$
\sum_{n = 0}^{N-1} \dtn  \((\xi^{(1),\alpha,n+1}_m)^2 + (\xi^{(2),\alpha,n+1}_m)^2 \) \lsim 1. 
$$
Using the a priori estimates of Lemma \ref{lemma_apriori}, $h_m^\alpha \in L^2((0,T)\times\Omega)$, the Lipschitz property of $S^\alpha_m$, the chain rule and product rule estimates on the sequence of GDs $(\D_p^l)_{l\in\N}$, and the cut-off property, we obtain that
$$
\int^{T-\tau}_0 \(\| \nabla^m_{\D_p} \zeta^\alpha_m (t)\|^2_{L^2(\Omega)} + \| \Pi^m_{\D_p} \zeta^\alpha_m (t)\|_{L^2(\Omega)}^2\)\d t \lsim 1. 
$$
We deduce from \cite[Lemma 4.1]{brenner.hilhorst2013} that $T_1 \lsim \tau + \Delta t$ with a hidden constant depending on $K$ but independent of $l$.
Similarly, using the time translate estimate \eqref{est_timetranslates_phi_df}, one shows that $T_2 \lsim \tau+ \Delta t$, which provides the time translates estimates on $\Pi^m_{\D_p} s^\alpha_m$ in $L^2(0,T;L^2(K))$. 

The space translates  estimates for $\Pi^m_{\D_p} s_m^\alpha$ in $L^2(0,T;L^2(K))$  derive from the a priori estimates of Lemma \ref{lemma_apriori}, the Lipschitz properties of 
$S^{\alpha}_m$ and from the compactness property of the sequence of spatial GDs $(\D_p^l)_{l\in\N}$ (cf.~Remark \ref{rem:compact.equivalent}).
Combined with the time translate estimates,  the Fr\'echet--Kolmogorov theorem implies that $\Pi^m_{\D_p} s_m^\alpha$  is relatively compact in $L^2(0,T;L^2(K))$ for any compact set $K$ of $\Omega\setminus\Gamma$. 
Since $\Pi^m_{\D_p} s^\alpha_m\in [0,1]$, it results that $\Pi^m_{\D_p} s_m^\alpha$  is relatively compact in $L^2((0,T)\times\Omega)$.
\end{proof}

\subsubsection{Uniform-in-time $L^2$-weak convergence of $\phi_\D \Pi^m_{\D_p} s^\alpha_m$ and $\phi_\D$}

\begin{proposition}
  \label{prop_uniftimeweakL2_phimSm}
  Under the assumptions of Proposition \ref{prop_compactness_Sm}, the sequences $(\phi_{\D^l})_{l\in\N}$ and $(\phi_{\D^l} \Pi^m_{\D_p}s_m^{\alpha,l})_{l\in \N}$,  with $s_m^{\alpha,l}= S_m^\alpha(p_c^l)$, converge up to a subsequence uniformly in time  weakly in $L^2(\Omega)$. 
\end{proposition}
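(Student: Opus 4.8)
The plan is to obtain the uniform-in-time weak-$L^2(\Omega)$ convergence through a discrete Ascoli--Arzel\`a argument, for which two ingredients must be secured: a bound of the two sequences in $L^\infty(0,T;L^2(\Omega))$, uniform in $l$, and a weak equicontinuity in time, also uniform in $l$. The uniform bound follows directly from Lemma~\ref{lemma_apriori}: the closure relation in \eqref{GD_closures} expresses $\phi_{\D^l}$ as $\Pi^m_{\D_p}\phi_m^0+b\,\div_{\D_\bu}(\bu^l-\bu^0)+\frac1M\Pi^m_{\D_p}(p_m^E-p_m^{E,0})$, and since $\div_{\D_\bu}=\mathrm{Trace}(\bbeps_{\D_\bu})$, both $\div_{\D_\bu}(\bu^l)$ and $\Pi^m_{\D_p}p_m^E$ are bounded in $L^\infty(0,T;L^2(\Omega))$ by \eqref{apriori.est}, the initial contributions being controlled through the consistency \eqref{eq:cons.stGD}; hence $(\phi_{\D^l})_l$ is bounded in $L^\infty(0,T;L^2(\Omega))$. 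As $\Pi^m_{\D_p}s_m^{\alpha,l}=S_m^\alpha(\Pi^m_{\D_p}p_c^l)\in[0,1]$, the product $\phi_{\D^l}\Pi^m_{\D_p}s_m^{\alpha,l}$ is dominated pointwise by $\phi_{\D^l}$ and is therefore bounded in the same space.

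For the time equicontinuity, fix a countable set $A\subset C_c^\infty(\Omega\setminus\overline\Gamma)$ dense in $L^2(\Omega)$ (recall that $\overline\Gamma$ is Lebesgue-negligible in $\Omega$); each $w\in A$ lies in $V_0$ with $\gamma w=0$. For such a $w$ set $\varphi_l=I_{\D_p^l}w$. By \eqref{eq:cons.stGD} and the strengthened ($r>8$) consistency of $(\D_p^l)_l$, one has $\Pi^m_{\D_p}\varphi_l\to w$ in $L^2(\Omega)$, while $\|\nabla^m_{\D_p}\varphi_l\|_{L^2(\Omega)}$, $\|\nabla^f_{\D_p}\varphi_l\|_{L^8(\Gamma)}$ and $\|\Pi^f_{\D_p}\varphi_l\|_{L^2(\Gamma)}$ are bounded uniformly in $l$ (the last two in fact tend to $0$, since $\gamma w=0$). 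Applying \eqref{est_timetranslates_phi_df} with this $\varphi_l$, bounding the $n$-sum by Cauchy--Schwarz and using $\sum_n\dtn(\xi^{(j),\alpha,n+1}_{\rm rt})^2\lesssim1$ from Proposition~\ref{prop_timetranslates}, we obtain for all $0<\tau<\tau'<T$
\[
\bigl|\<\phi_{\D^l}(\tau)-\phi_{\D^l}(\tau'),\Pi^m_{\D_p}\varphi_l\>_{L^2(\Omega)}\bigr|
\lesssim C(w)\Bigl(\textstyle\sum_{n=n_{\tau}+1}^{n_{\tau'}}\dtn\Bigr)^{1/2}
\lesssim C(w)\,(|\tau'-\tau|+\Delta t^l)^{1/2}.
\]
Splitting $\<\phi_{\D^l}(\cdot),w\>=\<\phi_{\D^l}(\cdot),w-\Pi^m_{\D_p}\varphi_l\>+\<\phi_{\D^l}(\cdot),\Pi^m_{\D_p}\varphi_l\>$ and controlling the first pairing by the uniform $L^\infty(0,T;L^2)$ bound times $\|w-\Pi^m_{\D_p}\varphi_l\|_{L^2(\Omega)}\to0$ (a bound independent of the time), we deduce the weak equicontinuity of $t\mapsto\<\phi_{\D^l}(t),w\>$, uniform in $l$ up to terms vanishing as $l\to\infty$.

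The product $\phi_{\D^l}\Pi^m_{\D_p}s_m^{\alpha,l}$ is treated identically, now using \eqref{est_timetranslates} (before summation on $\alpha$) with the same $\varphi_l$. The only new feature is the fracture accumulation term on the left-hand side of \eqref{est_timetranslates}: since $d_{f,\D_\bu^l}$ is bounded in $L^\infty(0,T;L^4(\Gamma))\hookrightarrow L^\infty(0,T;L^2(\Gamma))$ by \eqref{apriori.est} and $\Pi^f_{\D_p}s_f^{\alpha,l}\in[0,1]$, one has
\[
\bigl|\<[d_{f,\D_\bu}\Pi^f_{\D_p}s_f^\alpha](\tau)-[d_{f,\D_\bu}\Pi^f_{\D_p}s_f^\alpha](\tau'),\Pi^f_{\D_p}\varphi_l\>_{L^2(\Gamma)}\bigr|
\lesssim \|\Pi^f_{\D_p}\varphi_l\|_{L^2(\Gamma)}\longrightarrow 0
\]
as $l\to\infty$, uniformly in $\tau,\tau'$, so it does not spoil the equicontinuity of the matrix quantity. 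Combining this with \eqref{est_timetranslates} and $\Pi^m_{\D_p}\varphi_l\to w$ exactly as before yields the weak equicontinuity in time of $t\mapsto\<[\phi_{\D^l}\Pi^m_{\D_p}s_m^\alpha](t),w\>$, uniform in $l$ up to vanishing terms. With both ingredients available for every $w$ in the countable dense set $A$, a diagonal extraction together with the discrete Ascoli--Arzel\`a theorem \cite{gdm} furnishes a subsequence along which $\phi_{\D^l}(t)$ and $[\phi_{\D^l}\Pi^m_{\D_p}s_m^\alpha](t)$ converge weakly in $L^2(\Omega)$, uniformly in $t\in[0,T]$, the limits belonging to $C([0,T];L^2(\Omega)\text{-weak})$.

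The main delicate point is that the time-translate estimate \eqref{est_timetranslates} couples the matrix and fracture accumulation terms; isolating the matrix quantity forces us to test with functions supported away from $\Gamma$ (which remain dense in $L^2(\Omega)$) and to absorb the residual fracture contribution through the uniform $L^\infty(0,T;L^2(\Gamma))$ bound on $d_{f,\D_\bu}\Pi^f_{\D_p}s_f^\alpha$ combined with the vanishing of $\|\Pi^f_{\D_p}\varphi_l\|_{L^2(\Gamma)}$. The appearance of the $L^8(\Gamma)$ fracture-gradient norm in \eqref{est_timetranslates} is precisely what makes the strengthened consistency with $r>8$ indispensable here.
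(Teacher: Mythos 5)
Your overall strategy is sound and genuinely different from the paper's. The paper isolates the matrix accumulation term by invoking the \emph{cut-off property} of $(\D_p^l)_{l\in\N}$: it tests \eqref{est_timetranslates} with $\psi^l\varphi$, where the discrete cut-off $\psi^l$ makes the fracture reconstructions vanish \emph{exactly} ($\Pi^f_{\D_p}(\psi^l\varphi)=0$, $\nabla^f_{\D_p}(\psi^l\varphi)=0$), and then removes the cut-off at the end by a localization/density argument. You instead test with discrete approximations of $w\in C^\infty_c(\Omega\setminus\overline\Gamma)$ and absorb the residual fracture pairings as perturbations that vanish as $l\to\infty$, uniformly in $\tau,\tau'$ (using the $L^\infty(0,T;L^4(\G))$ bound on $d_{f,\D_\bu}$ and the vanishing of the fracture reconstruction of the test function); the discontinuous Ascoli--Arzel\`a theorem tolerates such vanishing terms, exactly as it tolerates the consistency error $\omega_{\D_p}$ in the paper's own proof. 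Your route has the merit of not using the cut-off property (nor the chain/product rules) at all in this proposition, which is a real economy of hypotheses; note also that one small imprecision — your first displayed estimate silently drops the fracture pairing $\<d_{f,\D_\bu}(\tau)-d_{f,\D_\bu}(\tau'),\Pi^f_{\D_p}\varphi_l\>$ from the left-hand side of \eqref{est_timetranslates_phi_df} — is repaired by the same absorption argument you spell out for the product term.

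There is, however, one genuine flaw: the choice $\varphi_l=I_{\D_p^l}w$. The interpolator $I_{\D_p}$ only enjoys property \eqref{eq:cons.stGD}, which controls the \emph{function} reconstructions $\Pi^m_{\D_p}I_{\D_p}w$ and $\Pi^f_{\D_p}I_{\D_p}w$; it gives no information whatsoever on $\nabla^m_{\D_p}I_{\D_p}w$ or $\nabla^f_{\D_p}I_{\D_p}w$. Those gradient norms are precisely what multiplies $\xi^{(1),\alpha,n+1}_m$ and $\xi^{(1),\alpha,n+1}_f$ on the right-hand side of \eqref{est_timetranslates} and \eqref{est_timetranslates_phi_df}, so your claimed uniform bounds (and hence the displayed equicontinuity estimate with constant $C(w)$) do not follow as written. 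The fix is immediate and is what the paper does: take $\varphi_l=P_{\D_p^l}w$, the minimizer in \eqref{eq:def.PDp}. Since $w\in V_0$ with $\gamma w=0$ and $\nabla_\tau\gamma w=0$, the spatial consistency with $r>8$ gives $\|\nabla^m_{\D_p}P_{\D_p}w\|_{L^2(\Omega)}$ bounded, $\|\nabla^f_{\D_p}P_{\D_p}w\|_{L^r(\G)}\to 0$ (hence bounded in $L^8(\G)$), $\|\Pi^f_{\D_p}P_{\D_p}w\|_{L^r(\G)}\to 0$, and $\Pi^m_{\D_p}P_{\D_p}w\to w$ in $L^2(\Omega)$ — all four properties your argument needs. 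With this substitution your proof is complete.
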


\begin{proof}
Let $K$ be a fixed compact set of $\Omega\setminus\Gamma$ and let $\psi^l$ be cut-off functions for this compact set, as defined in the cut-off property of $(\D_p^l)_{l\in\N}$. The superscript $l\in \N$ will be dropped when not required for the clarity of the proof, and assumed to be large enough. 

For $w\in V_0$ we let $P_{\D_p} w\in X^0_{\D_p}$ be the element that realizes the minimum in $\mathcal S_{\D_p}(w)$, so that
\begin{multline}\label{eq:def.PDp}
\|\nabla^m_{\D_p} P_{\D_p} w  - \nabla w\|_{L^2(\Omega)} 
+ \|\nabla^f_{\D_p} P_{\D_p}w  -\nabla_\tau \gamma w\|_{L^r(\Gamma)} \\
+ \|\Pi^m_{\D_p} P_{\D_p}w  -w\|_{L^2(\Omega)} + \|\Pi^f_{\D_p} P_{\D_p}w - \gamma w\|_{L^r(\Gamma)}
=\mathcal S_{\D_p}(w).
\end{multline}

Let $\overline{\varphi} \in C^\infty_c(\Omega)$ and set $\varphi = P_{\D_p}\overline{\varphi}$. It results from the cut-off property that
$\Pi^f_{\D_p} (\psi \varphi) = 0$ and $\nabla^f_{\D_p} (\psi\varphi) = 0$. Using the GD consistency property of $(\D_p^l)_{l\in\N}$ and \eqref{eq:def.PDp}, we see that $\| \nabla^m_{\D_p} (\psi \varphi) \|_{L^2(\Omega)}$ and  
$\| \Pi^m_{\D_p} (\psi \varphi) \|_{L^2(\Omega)}$ are bounded by constants  depending on $K$ and $\overline\varphi$ but independent of $l$. 
Then, from Proposition \ref{prop_timetranslates}, we have with hidden constants independent of $l$ but possibly depending on $K$ and $\overline\varphi$, that 
$$
\begin{array}{ll}
& \Big| 
\dsp \< \Pi^m_{\D_p} \psi  \left( [\phi_\D \Pi^m_{\D_p} s^\alpha_m](\tau) - [\phi_\D \Pi^m_{\D_p} s^\alpha_m](\tau') \right), \Pi^m_{\D_p} \varphi \>_{L^2(\Omega)} 
\Big| \\\\  
& = \dsp \Big| \< [\phi_\D \Pi^m_{\D_p} s^\alpha_m](\tau) - [\phi_\D \Pi^m_{\D_p} s^\alpha_m](\tau'), \Pi^m_{\D_p} (\psi \varphi) \>_{L^2(\Omega)} \Big|
\\\\
& \lsim  \dsp 
\sum_{n = n_\tau+1}^{n_{\tau'}} \dtn 
 \left( 
 \xi^{(1),\alpha,n+1}_m  \| \nabla^m_{\D_p} (\psi \varphi) \|_{L^2(\Omega)} +
 \xi^{(2),\alpha,n+1}_m  \| \Pi^m_{\D_p} (\psi \varphi) \|_{L^2(\Omega)}
 \right) \\\\
 & \lsim  \dsp 
 \left(\sum_{n = n_\tau+1}^{n_{\tau'}}
 \dtn  \left( 
 \left( \xi^{(1),\alpha,n+1}_m \right)^2 +
 \left( \xi^{(2),\alpha,n+1}_m \right)^2 
 \right) \right)^{1\over 2} \left(\sum_{n = n_\tau+1}^{n_{\tau'}}
 \dtn \right)^{1\over 2} \\\\
 & \dsp \lsim   |\tau-\tau'|^{1\over 2} + \Delta t^{1\over 2}. 
 \end{array}
$$
Since $\Pi^m_{\D_p} s^\alpha_m\in [0,1]$, $\phi_\D$ is bounded in $L^\infty(0,T;L^2(\Omega))$ 
(see \eqref{GD_closures} and \eqref{apriori.est}), and $\Pi^m_{\D_p} \psi$ is uniformly bounded, one has
\begin{equation}\label{translates_phi_s}
\begin{array}{lllll}
\Big| 
\dsp \< \Pi^m_{\D_p} \psi  \left( [\phi_\D \Pi^m_{\D_p} s^\alpha_m](\tau) - [\phi_\D \Pi^m_{\D_p} s^\alpha_m](\tau') \right), \overline{\varphi} \>_{L^2(\Omega)} 
\Big| 
\lsim   |\tau-\tau'|^{1\over 2} + \Delta t^{1\over 2} + \omega_{\D_p}, 
 \end{array}
\end{equation}
with $\omega_{\D_p}=\|\overline\varphi - \Pi^m_{\D_p} \varphi\|_{L^2(\Omega)}$ a consistency error term such that
$\lim_{l\rightarrow + \infty}\omega_{\D^l_p} = 0$.
It follows from the discontinuous Ascoli-Arzel\`a  theorem \cite[Theorem C.11]{gdm} that (up to a subsequence) the sequence $(\Pi^m_{\D_p} \psi)  \phi_\D (\Pi^m_{\D_p}  s^\alpha_m)=\phi_\D\Pi^m_{\D_p}  (s^\alpha_m \psi)$ converges 
uniformly in time weakly in $L^2(\Omega)$.

Let us now take $w\in C^\infty_c(\Omega\backslash\Gamma)$ and let $K$ be the support of $w$. For $l$ large enough, by definition of $\psi^l$ we have $\big(\phi_{\D^l} \Pi_{\D^l_p}^m s^{\alpha,l}_m\big)|_K  =  \phi_{\D^l} \Pi_{\D^l_p}^m (\psi^l  s^{\alpha,l}_m)$. Hence, 
\begin{equation}\label{eq:cv.unif}
\<\phi_{\D^l} \Pi_{\D^l_p}^m s^{\alpha,l}_m,w\>_{L^2(\Omega)}\mbox{ converges uniformly with respect to $t\in [0,T]$.}
\end{equation}
Since $(\phi_{\D^l} \Pi_{\D^l_p}^m  s^{\alpha,l}_m)_{l\in\N}$ is bounded in $L^\infty(0,T;L^2(\Omega))$, the density of $C^\infty_c(\Omega\backslash\Gamma)$ in $L^2(\Omega)$ shows that the convergence \eqref{eq:cv.unif} is valid for any $w\in L^2(\Omega)$, which concludes the proof that the sequence $\phi_{\D^l} \Pi_{\D^l_p}^m  s^{\alpha,l}_m$ converges uniformly in time, weakly in $L^2(\Omega)$. 

We deduce that the sequence $\phi_{\D^l}=\sum_{\alpha \in \{\rm nw,\rm w\}}\phi_{\D^l} \Pi_{\D^l_p}^m  s^{\alpha,l}_m$ also converges uniformly in time, weakly in $L^2(\Omega)$.
\end{proof}

\subsubsection{Uniform-in-time $L^2$-weak convergence of $d_{f,\D_\bu} \Pi^f_{\D_p} s^\alpha_f$ and $d_{f,\D_\bu}$}

\begin{proposition}
  \label{prop_uniftimeweakL2_dfsf}
  Under the assumptions of Proposition \ref{prop_compactness_Sm}, the sequences $(d_{f,\D^l_\bu})_{l\in\N}$ and $(d_{f,\D^l_\bu} \Pi^f_{\D_p}s_f^{\alpha,l})_{l\in \N}$,  with $s_f^{\alpha,l}= S_f^\alpha(p_c^l)$, converge up to a subsequence uniformly in time  weakly in $L^2(\Gamma)$. 
\end{proposition}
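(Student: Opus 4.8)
The plan is to reproduce, in the fracture, the argument of Proposition~\ref{prop_uniftimeweakL2_phimSm}: establish the uniform-in-time weak-$L^2(\Gamma)$ convergence of $d_{f,\D_\bu}\Pi^f_{\D_p}s^\alpha_f$ by means of the discontinuous Ascoli--Arzel\`a theorem \cite[Theorem C.11]{gdm}, and then deduce the convergence of $d_{f,\D_\bu}$ by summing over $\alpha$ and using $\sum_\alpha\Pi^f_{\D_p}s^\alpha_f=S^\g_f(\Pi^f_{\D_p}p_c)+S^\l_f(\Pi^f_{\D_p}p_c)=1$. The $L^\infty(0,T;L^2(\Gamma))$-boundedness required by the theorem follows from the a priori bound on $d_{f,\D_\bu}$ in $L^\infty(0,T;L^4(\Gamma))$ together with $\Pi^f_{\D_p}s^\alpha_f\in[0,1]$. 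The whole difficulty is thus reduced to establishing the uniform-in-$l$ time-equicontinuity of the pairings $\langle d_{f,\D_\bu}\Pi^f_{\D_p}s^\alpha_f(\tau)-d_{f,\D_\bu}\Pi^f_{\D_p}s^\alpha_f(\tau'),\bar w\rangle_{L^2(\Gamma)}$ for $\bar w$ ranging over a dense family of test functions.

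For the equicontinuity, I would fix $\bar w\in C_c^\infty(\Gamma_i)$ supported away from the tips and the intersections (such functions, summed over $i\in I$, form a dense subspace of $L^2(\Gamma)$), pick a smooth $\bar v\in V_0$ with $\gamma\bar v=\bar w$, and set $\varphi=P_{\D_p}\bar v$ as in \eqref{eq:def.PDp}. By the consistency of $(\D_p^l)_{l\in\N}$ with exponent $r>8$, the four quantities $\|\nabla^m_{\D_p}\varphi\|_{L^2(\Omega)}$, $\|\nabla^f_{\D_p}\varphi\|_{L^8(\Gamma)}$, $\|\Pi^m_{\D_p}\varphi\|_{L^2(\Omega)}$ and $\|\Pi^f_{\D_p}\varphi\|_{L^2(\Gamma)}$ are bounded uniformly in $l$; the $L^8(\Gamma)$ control of $\nabla^f_{\D_p}\varphi$ (obtained since $\nabla^f_{\D_p}\varphi\to\nabla_\tau\bar w$ in $L^r(\Gamma)\hookrightarrow L^8(\Gamma)$) is precisely where the strengthened consistency is needed. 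Inserting $\varphi$ into \eqref{est_timetranslates} and applying Cauchy--Schwarz to the sum over $n$ exactly as in Proposition~\ref{prop_uniftimeweakL2_phimSm} yields
$$
|A+B|\lesssim |\tau-\tau'|^{\frac12}+\Delta t^{\frac12},
$$
where $A=\langle[\phi_\D\Pi^m_{\D_p}s^\alpha_m](\tau)-[\phi_\D\Pi^m_{\D_p}s^\alpha_m](\tau'),\Pi^m_{\D_p}\varphi\rangle_{L^2(\Omega)}$ is the matrix contribution and $B=\langle[d_{f,\D_\bu}\Pi^f_{\D_p}s^\alpha_f](\tau)-[d_{f,\D_\bu}\Pi^f_{\D_p}s^\alpha_f](\tau'),\Pi^f_{\D_p}\varphi\rangle_{L^2(\Gamma)}$ the fracture one.

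The main obstacle, and the point where the proof departs from that of Proposition~\ref{prop_uniftimeweakL2_phimSm}, is that no cut-off is available to annihilate $A$: a matrix cut-off kills the \emph{fracture} reconstruction, not the converse. Instead, I would control $A$ by invoking the already-proven Proposition~\ref{prop_uniftimeweakL2_phimSm}. Extracting first the subsequence along which $\phi_\D\Pi^m_{\D_p}s^\alpha_m\to h$ uniformly in time weakly in $L^2(\Omega)$, I split $\Pi^m_{\D_p}\varphi=\bar v+(\Pi^m_{\D_p}\varphi-\bar v)$: the increment tested against $\Pi^m_{\D_p}\varphi-\bar v$ is bounded by $2\sup_{l,t}\|\phi_\D\Pi^m_{\D_p}s^\alpha_m\|_{L^2(\Omega)}\,\|\Pi^m_{\D_p}\varphi-\bar v\|_{L^2(\Omega)}$, a consistency error vanishing with $l$, while the increment tested against the fixed $\bar v$ is small thanks to the uniform-in-time convergence $h^l\to h$ and the continuity of $t\mapsto\langle h(t),\bar v\rangle$. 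This gives $|A|\lesssim\omega(|\tau-\tau'|)+\varepsilon_l$ with $\omega(0^+)=0$ and $\varepsilon_l\to0$.

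Combining the two bounds, and replacing $\Pi^f_{\D_p}\varphi$ by $\bar w$ up to a further vanishing consistency error $\|\bar w-\Pi^f_{\D_p}\varphi\|_{L^2(\Gamma)}$, I obtain
$$
\big|\langle[d_{f,\D_\bu}\Pi^f_{\D_p}s^\alpha_f](\tau)-[d_{f,\D_\bu}\Pi^f_{\D_p}s^\alpha_f](\tau'),\bar w\rangle_{L^2(\Gamma)}\big|\lesssim |\tau-\tau'|^{\frac12}+\Delta t^{\frac12}+\omega(|\tau-\tau'|)+\varepsilon_l,
$$
which is exactly the hypothesis of \cite[Theorem C.11]{gdm}. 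A density and diagonal argument over the countable dense family of test functions $\bar w$ then yields the uniform-in-time weak-$L^2(\Gamma)$ convergence of $d_{f,\D_\bu}\Pi^f_{\D_p}s^\alpha_f$ (up to a subsequence), and summing over $\alpha\in\{\g,\l\}$ gives the same convergence for $d_{f,\D_\bu}=\sum_\alpha d_{f,\D_\bu}\Pi^f_{\D_p}s^\alpha_f$. I expect the delicate points to be the existence of the $V_0$-extension $\bar v$ of $\bar w$ (clear for $\bar w$ supported away from tips and intersections) and the careful bookkeeping of the coupling term $A$, since the uniform-in-time weak convergence from Proposition~\ref{prop_uniftimeweakL2_phimSm} must be upgraded to a usable equicontinuity modulus.
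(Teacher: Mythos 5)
Your proof is correct and follows essentially the same route as the paper's: bound the sum of the matrix and fracture pairings via the time-translate estimate \eqref{est_timetranslates} with $\varphi=P_{\D_p}$ of a smooth test function, isolate the fracture term by controlling the matrix term through the already-established uniform-in-time $L^2(\Omega)$-weak convergence of $\phi_\D\Pi^m_{\D_p}s^\alpha_m$ (Proposition \ref{prop_uniftimeweakL2_phimSm}), replace $\Pi^f_{\D_p}\varphi$ by the test function up to a vanishing consistency error, and conclude with the discontinuous Ascoli--Arzel\`a theorem and summation over the phases. The only cosmetic difference is your choice of the dense family of test functions (functions in $C_c^\infty(\Gamma_i)$ away from tips and intersections, lifted to $V_0$, rather than the paper's traces of $C_c^\infty(\Omega)$ functions), which changes nothing essential.
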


\begin{proof}
Let $K$ be a fixed compact set of $\Omega\setminus\Gamma$ and let us consider cut-off functions $\psi^l$ as defined in the cut-off property of $(\D_p^l)_{l\in\N}$. In the following, the superscript $l\in \N$ (assumed to be large enough) is dropped when not required for the clarity of the proof, and the hidden constants are independent of $l$.
Let $\overline{\varphi} \in C^\infty_c(\Omega)$ and set $\varphi = P_{\D_p}\overline{\varphi}$, with $P_{\D_p}$ characterised by \eqref{eq:def.PDp}. From Proposition \ref{prop_timetranslates} we have 
\begin{align*}
&\Big| \< [d_{f,\D_\bu} \Pi^f_{\D_p} s^\alpha_f](\tau) - [d_{f,\D_\bu} \Pi^f_{\D_p} s^\alpha_f](\tau'), \Pi^f_{\D_p}  \varphi \>_{L^2(\Gamma)}\Big|\\
& \lsim   
\Big| 
\dsp \<    \left( [\phi_\D \Pi^m_{\D_p} s^\alpha_m](\tau) - [\phi_\D \Pi^m_{\D_p} s^\alpha_m](\tau') \right), \Pi^m_{\D_p} \varphi \>_{L^2(\Omega)} 
\Big| \\
&\quad+ 
 \max\left(  \| \nabla^m_{\D_p}  \varphi \|_{L^2(\Omega)}, \| \nabla^f_{\D_p}  \varphi \|_{L^8(\Gamma)},\| \Pi^m_{\D_p}  \varphi \|_{L^2(\Omega)}, \| \Pi^f_{\D_p}  \varphi \|_{L^2(\Gamma)}  \right)  \\
& \qquad\times 
 \left(\sum_{n = n_\tau+1}^{n_{\tau'}}
 \dtn  \left( 
 \left( \xi^{(1),\alpha,n+1}_m \right)^2 +
 \left( \xi^{(1),\alpha,n+1}_f \right)^2 +
 \left( \xi^{(2),\alpha,n+1}_m \right)^2 +
 \left( \xi^{(2),\alpha,n+1}_f \right)^2
 \right) \right)^{1\over 2} \\
 & \qquad \times\left(\sum_{n = n_\tau+1}^{n_{\tau'}}
 \dtn \right)^{1\over 2} \\
 & \lsim  \( |\tau-\tau'|^{1\over 2} + \Delta t^{1\over 2}\)
 + \Big| 
\dsp \<    \left( [\phi_\D \Pi^m_{\D_p} s^\alpha_m](\tau) - [\phi_\D \Pi^m_{\D_p} s^\alpha_m](\tau') \right), \Pi^m_{\D_p} \varphi \>_{L^2(\Omega)} 
\Big|.
 \end{align*}

Since $\phi_\D \Pi^m_{\D_p} s^\alpha_m$ is bounded in $L^\infty(0,T;L^2(\Omega))$ (see the proof of Proposition \ref{prop_uniftimeweakL2_phimSm}),
we have
\begin{align*}
& \Big| 
\dsp \<   \left( [\phi_\D \Pi^m_{\D_p} s^\alpha_m](\tau) - [\phi_\D \Pi^m_{\D_p} s^\alpha_m](\tau') \right), \Pi^m_{\D_p} \varphi \>_{L^2(\Omega)} 
\Big|\\
&\lsim  \|\bar \varphi - \Pi^m_{\D_p} \varphi \|_{L^2(\Omega)} +
\Big| 
\dsp \< [\phi_\D \Pi^m_{\D_p}  s^\alpha_m](\tau) - [\phi_\D \Pi^m_{\D_p} s^\alpha_m](\tau'), \bar\varphi \>_{L^2(\Omega)}  
\Big|
 \end{align*}
and
$$
\Big| \< [d_{f,\D_\bu} \Pi^f_{\D_p} s^\alpha_f](\tau) - [d_{f,\D_\bu} \Pi^f_{\D_p} s^\alpha_f](\tau'), \bar\varphi-\Pi^f_{\D_p}  \varphi \>_{L^2(\Gamma)}\Big| \lsim
\|d_{f,\D_\bu}\|_{L^\infty(0,T;L^2(\Gamma))} \| \bar\varphi-\Pi^f_{\D_p}  \varphi \|_{L^2(\Gamma)}. 
$$
Using the a priori estimates of Lemma \ref{lemma_apriori}, and Proposition \ref{prop_uniftimeweakL2_phimSm} stating the uniform-in-time $L^2(\Omega)$-weak convergence of $\phi_\D \Pi^m_{\D_p}  s^\alpha_m$ (which implies the equi-continuity of the functions $\tau\mapsto \< [\phi_\D \Pi^m_{\D_p}  s^\alpha_m](\tau),\bar\varphi\>_{L^2(\Omega)}$), we deduce that
$$
\Big| \< [d_{f,\D_\bu} \Pi^f_{\D_p} s^\alpha_f](\tau) - [d_{f,\D_\bu} \Pi^f_{\D_p} s^\alpha_f](\tau'), \bar \varphi \>_{L^2(\Gamma)}\Big| \lsim
  \omega(|\tau-\tau'|) + \Delta t^{1\over 2} + \varpi_{\D_p}, 
$$
with $\lim_{h\rightarrow 0}\omega(h)=0$ and
$\varpi_{\D_p}=\|\overline\varphi - \Pi^m_{\D_p} \varphi\|_{L^2(\Omega)} + \| \bar\varphi-\Pi^f_{\D_p}  \varphi \|_{L^2(\Gamma)}$ a consistency error term such that
$\lim_{l\rightarrow + \infty}\varpi_{\D^l_p} = 0$.
It follows from the discontinuous Ascoli-Arzel\`a theorem \cite[Theorem C.11]{gdm} that (up to a subsequence) the sequence $d_{f,\D_\bu} \Pi^f_{\D_p} s^\alpha_f$ converges uniformly in time weakly in $L^2(\Gamma)$. Summing over $\alpha \in \{\rm nw,\rm w\}$, we also deduce the uniform-in-time $L^2(\Gamma)$-weak convergence of $d_{f,\D_\bu}$.
\end{proof}

\subsubsection{Strong convergence of $d_{f,\D_\bu}$, $d_{f,\D_\bu} \Pi^f_{\D_p} s_{f}^\alpha$, and $\Pi^f_{\D_p} s^\alpha_{f}$}

\begin{proposition}
  \label{prop_compactnessdfsf}
  Under the assumptions of Proposition \ref{prop_compactness_Sm}, the sequence $(d_{f,\D^l_\bu})_{l\in\N}$ converges up to a subsequence in $L^\infty(0,T;L^p(\Gamma))$ for all $2 \leq p < 4$,
  and the sequences $(d_{f,\D^l_\bu} \Pi^f_{\D_p}s_f^{\alpha,l})_{l\in\N}$ and $(\Pi^f_{\D_p}s_f^{\alpha,l})_{l\in \N}$,  with $s_f^{\alpha,l}= S_f^\alpha(p_c^l)$, converge up to a subsequence in $L^4(0,T;L^2(\Gamma))$. 
\end{proposition}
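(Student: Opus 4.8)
The plan is to first turn the uniform-in-time weak $L^2(\Gamma)$ convergence of $(d_{f,\D_\bu^l})_l$ furnished by Proposition~\ref{prop_uniftimeweakL2_dfsf} into the announced strong convergence, and then to bootstrap the convergence of the saturations from that of the aperture. For the aperture, the key observation is that Lemma~\ref{lemma_apriori} bounds $\|\bu^l(t)\|_{\D_\bu^l}$ uniformly in $t\in(0,T)$ and $l\in\N$ (this norm being the $L^2$-norm of the discrete symmetric gradient, controlled in $L^\infty(0,T)$ by the lemma). Applying the translate characterisation of the compactness of $(\D_\bu^l)_l$ (Remark~\ref{rem:compact.equivalent.Du}) to $\bv=\bu^l(t)$, the space translates of $d_{f,\D_\bu^l}(t)=-\jump{\bu^l(t)}_{\D_\bu^l}$ in $L^s(\Gamma)$ ($s<4$) are controlled uniformly in both $l$ and $t$. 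I would then introduce a spatial averaging operator $A_\eta$ acting on each $\Gamma_i$ at scale $\eta$ (extended by $0$) and split $d_{f,\D_\bu^l}-\bar d_f=(d_{f,\D_\bu^l}-A_\eta d_{f,\D_\bu^l})+(A_\eta d_{f,\D_\bu^l}-A_\eta\bar d_f)+(A_\eta\bar d_f-\bar d_f)$. The two outer terms are $\lsim$ the uniform space-translate modulus, hence small with $\eta$ uniformly in $t$ and $l$; for fixed $\eta$ the middle term tends to $0$ in $L^\infty(0,T;L^s(\Gamma))$, because the uniform-in-time weak convergence makes $A_\eta d_{f,\D_\bu^l}(t)(x)\to A_\eta\bar d_f(t)(x)$ uniformly in $t$ for each $x$, while the spatial equicontinuity of the averages (uniform in $l,t$) promotes this to uniform convergence in $x$ as well. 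Sending $l\to\infty$ and then $\eta\to0$ yields $d_{f,\D_\bu^l}\to\bar d_f$ in $L^\infty(0,T;L^s(\Gamma))$ for every $s<4$, in particular for $2\le p<4$.

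For the saturations I would exploit that, away from the tips, $d_{f,\D_\bu^l}\ge d_0>0$ and $d_{f,\D_\bu^l}\to\bar d_f$ strongly, so that proving the strong convergence of $\Pi^f_{\D_p}s_f^{\alpha,l}$ and of the product $d_{f,\D_\bu^l}\Pi^f_{\D_p}s_f^{\alpha,l}$ are equivalent (through multiplication, resp. division, by $d_{f,\D_\bu^l}$, which is strongly convergent and bounded below). I would establish the strong convergence of $\Pi^f_{\D_p}s_f^{\alpha,l}$ on compact subsets of $\Gamma$ avoiding the tips and the intersections, following the template of Proposition~\ref{prop_compactness_Sm}. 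Fix such a compact $K_f$; there $d_0\ge\delta_K>0$, so the a priori bound $\|d_{f,\D_\bu}^{\nf 3 2}\nabla^f_{\D_p}p^\alpha\|_{L^2((0,T)\times\Gamma)}\lsim1$ of Lemma~\ref{lemma_apriori}, together with $d_0\le d_{f,\D_\bu}$, gives $\int_0^T\|p_c^l(t)\|_{\D_p^l}^2\,\d t\lsim1$; the translate characterisation of the \emph{local} compactness of $(\D_p^l)_l$ (Remark~\ref{rem:compact.equivalent}) then controls the space translates of $\Pi^f_{\D_p}p_c^l$ in $L^2((0,T)\times K_f)$ uniformly in $l$.

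The crucial point, which avoids the need for a fracture chain-rule estimate, is that this space-translate control transfers directly to the saturations at the level of the reconstructions: since $\Pi^f_{\D_p}$ is piecewise constant, $\Pi^f_{\D_p}s_f^{\alpha,l}=S_f^\alpha(\Pi^f_{\D_p}p_c^l)$, and the Lipschitz continuity of $S_f^\alpha$ (assumption~\ref{H2}) bounds the space translates of $\Pi^f_{\D_p}s_f^{\alpha,l}$ on $K_f$ by those of $\Pi^f_{\D_p}p_c^l$. For the time translates I would use the dual estimate~\eqref{est_timetranslates} of Proposition~\ref{prop_timetranslates} on $d_{f,\D_\bu}\Pi^f_{\D_p}s_f^\alpha$, converting it into a strong time-translate estimate by combining it with the space compactness just obtained and \cite[Lemma~4.1]{brenner.hilhorst2013}, exactly as in Proposition~\ref{prop_compactness_Sm}; the matrix accumulation term appearing alongside the fracture term in~\eqref{est_timetranslates} is handled as in the proof of Proposition~\ref{prop_uniftimeweakL2_dfsf}, using the already established time-equicontinuity of the matrix contribution. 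The Fr\'echet--Kolmogorov theorem then gives relative compactness of $d_{f,\D_\bu^l}\Pi^f_{\D_p}s_f^{\alpha,l}$ in $L^2((0,T)\times K_f)$ and, after division by $d_{f,\D_\bu^l}\ge\delta_K$, of $\Pi^f_{\D_p}s_f^{\alpha,l}$ as well, toward a limit $s^\alpha$.

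It remains to pass from compact subsets to all of $\Gamma$. Since the tips and the intersections form a set of zero $(d-1)$-dimensional measure, admissible compacts $K_f$ exhaust $\Gamma$ up to a negligible set, and on the thin complementary layer the uniform bound $0\le\Pi^f_{\D_p}s_f^{\alpha,l}\le1$ renders the $L^2$-in-space contribution uniformly small; an exhaustion/diagonal argument then yields convergence of $\Pi^f_{\D_p}s_f^{\alpha,l}$ in $L^2((0,T)\times\Gamma)$, which interpolates with the $L^\infty$ bound into $L^q((0,T)\times\Gamma)$ for every $q<\infty$, hence into $L^4(0,T;L^2(\Gamma))$ as $|\Gamma|<\infty$. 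The convergence of the products in $L^4(0,T;L^2(\Gamma))$ then follows by writing $d_{f,\D_\bu^l}\Pi^f_{\D_p}s_f^{\alpha,l}-\bar d_f\,s^\alpha=d_{f,\D_\bu^l}(\Pi^f_{\D_p}s_f^{\alpha,l}-s^\alpha)+(d_{f,\D_\bu^l}-\bar d_f)\,s^\alpha$ and applying H\"older with the $L^\infty(0,T;L^4(\Gamma))$ bound on the aperture and the boundedness of the saturations; the limit $s^\alpha$ is identified with $S_f^\alpha(\gamma\bar p_c)$ in Subsection~\ref{subsec:convergence}. I expect the main obstacle to be precisely the degeneracy of the fracture norm near the tips, which forbids any global spatial compactness of $\Pi^f_{\D_p}$; it is circumvented by the measure-zero character of the tip/intersection set combined with the uniform $L^\infty$ bound on the saturations, and by performing the Lipschitz transfer on the piecewise-constant reconstructions rather than on the discrete gradients.
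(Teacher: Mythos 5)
Your treatment of the aperture is correct: the combination of the translate characterisation of Remark~\ref{rem:compact.equivalent.Du}, the $L^\infty$-in-time bound on $\bbeps_{\D_\bu}(\bu)$, and your averaging-operator argument is essentially an inline reproof of the paper's Lemma~\ref{lemme2}, which is exactly the tool the paper invokes at this point. Likewise, your space-translate control of $\Pi^f_{\D_p}s_f^{\alpha,l}$ (transferring translates of $\Pi^f_{\D_p}p_c^l$ through the Lipschitz function $S^\alpha_f$ at the level of the piecewise-constant reconstructions) matches the paper's estimate. The genuine gap is in your time-compactness step for the fracture quantities, i.e.\ the claim that a strong time-translate estimate on $d_{f,\D_\bu}\Pi^f_{\D_p}s^\alpha_f$ can be obtained from the dual estimate \eqref{est_timetranslates} ``exactly as in Proposition~\ref{prop_compactness_Sm}''.

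That step does not carry over, for a structural reason. In Proposition~\ref{prop_compactness_Sm} the admissible test function $\zeta^\alpha_m=(s^\alpha_m(t+\tau)-s^\alpha_m(t))\psi\in X^0_{\D_p}$ works only because (a) the cut-off property kills \emph{all} fracture terms ($\Pi^f_{\D_p}\zeta^\alpha_m=0$, $\nabla^f_{\D_p}\zeta^\alpha_m=0$), and (b) the matrix chain/product-rule estimates bound $\|\nabla^m_{\D_p}\zeta^\alpha_m\|_{L^2(\Omega)}$. For the fracture analogue $\zeta^\alpha_f=(s^\alpha_f(t+\tau)-s^\alpha_f(t))\psi_f$, the right-hand side of \eqref{est_timetranslates} contains the term $\xi^{(1),\alpha,n+1}_f\|\nabla^f_{\D_p}\zeta^\alpha_f\|_{L^8(\Gamma)}$, and there is no tool in the framework to control it: the assumed chain-rule and product-rule estimates concern only $\nabla^m_{\D_p}$, no cut-off property exists on the fracture side, and the a priori bounds give only the $d_0^{\nf 3 2}$-weighted $L^2(\Gamma)$ norm of discrete fracture gradients, nowhere near an $L^8(\Gamma)$ bound for a $t$- and $l$-dependent test function. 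Your proposed handling of the accompanying matrix term also fails: the equicontinuity used in Proposition~\ref{prop_uniftimeweakL2_dfsf} is for pairings against a \emph{fixed} smooth function (via $P_{\D_p}\bar\varphi$), and does not apply to test functions that vary with $t$ and $l$. This is precisely why the paper never establishes strong time translates for fracture quantities: instead it applies Lemma~\ref{lemme2} with $p=4$, upgrading the uniform-in-time \emph{weak} $L^2(\Gamma)$ convergence of $d_{f,\D_\bu}\Pi^f_{\D_p}s^\alpha_f$ (Proposition~\ref{prop_uniftimeweakL2_dfsf}) together with the uniform space-translate estimates into strong convergence in $L^4(0,T;L^2(K_f))$. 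Your proof becomes correct, and in fact coincides with the paper's, if you replace the time-translate/Fr\'echet--Kolmogorov step by this invocation of Lemma~\ref{lemme2} --- the very lemma you already reproved inline for the aperture.
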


\begin{proof} 
By the characterization in Remark \ref{rem:compact.equivalent.Du} of the compactness of $(\D_\bu^l)_{l\in\N}$ and the
estimate on $\bbeps_{\D_\bu}(\bu)$ in Lemma \ref{lemma_apriori}, we have, for all $i\in I$, all $\eta_i$ tangent to $\Gamma_i$, a.e.\ $t\in (0,T)$ and all $s<4$,
$$
\left\|d_{f,\D_\bu^l}(t,\cdot+\eta_i)-d_{f,\D_\bu^l}(t,\cdot)\right\|_{L^s(\Gamma_i)}\le T_{\D_\bu^l,s}(0,\eta)\|\bbeps_{\D_\bu}(\bu)(t,\cdot)\|_{L^2(\Omega,\S_d(\R))}
\lsim T_{\D_\bu^l,s}(0,\eta),
$$
where $\eta=(0,\ldots,0,\eta_i,0,\ldots,0)$ and $d_{f,\D_\bu^l}$ has been extended by 0 in the hyperplane spanned by $\Gamma_i$. Together with the uniform-in-time $L^2(\Gamma)$-weak convergence of $d_{f,\D_\bu^l}$ from Proposition \ref{prop_uniftimeweakL2_dfsf}, this shows that we can apply Lemma \ref{lemme2} to $d_{f,\D_\bu^l}$ with $p=+\infty$ and get the convergence of this sequence in $L^\infty(0,T;L^2(\Gamma))$.
Since, from the a priori estimates of Lemma \ref{lemma_apriori}, this sequence $d_{f,\D_\bu^l}$ is bounded in $L^\infty(0,T;L^4(\Gamma))$, it follows that it converges in $L^\infty(0,T;L^q(\Gamma))$ for all $2 \leq q < 4$.
 
For any compact set $K_{f}\subset \Gamma$ that is disjoint from the intersections $(\overline{\Gamma}_i\cap\overline{\Gamma}_j)_{i\not=j}$, using that $\Pi^f_{\D_p} s^\alpha_{f}\in [0,1]$, that $\|d_{f,\D_\bu}(t,\cdot) \|_{L^4(\Gamma)}$ is uniformly bounded in $t$, and the Lipschitz properties of $S^\alpha_f$, it follows that, for all $i\in I$ and $\eta_i$ tangent to $\Gamma_i$ small enough, 
\begin{align*}
  & \|[d_{f,\D_\bu} \Pi^f_{\D_p} s^\alpha_{f}](t,\cdot+\eta_i) - [d_{f,\D_\bu} \Pi^f_{\D_p} s^\alpha_{f}](t,\cdot) \|_{L^2(K_f\cap\Gamma_i)}\\
  &\leq
  \|d_{f,\D_\bu}(t,\cdot+\eta_i) - d_{f,\D_\bu}(t,\cdot) \|_{L^2(K_f\cap\Gamma_i)}\\
& \qquad  + \|\Pi^f_{\D_p} s_{f}^\alpha(t,\cdot+\eta_i) - \Pi^f_{\D_p} s^\alpha_{f}(t,\cdot) \|_{L^4(K_f\cap\Gamma_i)} \|d_{f,\D_\bu}(t,\cdot) \|_{L^4(K_f\cap\Gamma_i)}\\
  &\lsim 
  \|d_{f,\D_\bu}(t,\cdot+\eta_i) - d_{f,\D_\bu}(t,\cdot) \|_{L^2(K_f\cap\Gamma_i)}
  + \|\Pi^f_{\D_p} s_{f}^\alpha(t,\cdot+\eta_i) - \Pi^f_{\D_p} s^\alpha_{f}(t,\cdot) \|_{L^2(K_f\cap\Gamma_i)}^{1\over 2} \\
  & \lsim \|d_{f,\D_\bu}(t,\cdot+\eta_i) - d_{f,\D_\bu}(t,\cdot) \|_{L^2(K_f\cap\Gamma_i)}
  + \|\Pi_{\D_p}^f p_{c}(t,\cdot+\eta_i) - \Pi_{\D_p}^f p_{c}(t,\cdot) \|_{L^2(K_f\cap\Gamma_i)}^{1\over 2}. 
\end{align*}
From the compactness properties of $(\D_\bu^l)_{l\in\N}$ and $(\D_p^l)_{l\in\N}$ (see Remarks \ref{rem:compact.equivalent} and \ref{rem:compact.equivalent.Du}) it results that  
\begin{align*}
&  \sum_{i\in I}\Big\| \sup_{|\eta_i|\leq \delta} \|[d_{f,\D_\bu} \Pi^f_{\D_p} s^\alpha_{f}](\cdot,\cdot+\eta_i) - [d_{f,\D_\bu} \Pi^f_{\D_p} s^\alpha_{f}](\cdot,\cdot) \|_{L^2(K_f\cap\Gamma_i)}\Big\|_{L^4(0,T)}\\
  &  \lsim  T_{K_f}(\delta) \(
\|\bbeps_{\D_\bu}(\bu)\|_{L^\infty(0,T;L^2(\Omega))} 
  + \sum_{\alpha \in \{\g,\l\}}  (\|d_0^{\nf 3 2}\nabla_{\D_p}^f p^\alpha\|_{L^2(0,T;L^2(\Gamma))} + \|\nabla_{\D_p}^m p^\alpha\|_{L^2(0,T;L^2(\Omega))} ) \)
\end{align*}
with $\lim_{\delta \rightarrow 0}T_{K_f}(\delta) = 0$. 
From the a priori estimates of Lemma \ref{lemma_apriori}, and the uniform-in-time $L^2(\Gamma)$-weak convergence of $d_{f,\D_\bu} s^\alpha_{f}$ of Proposition \ref{prop_uniftimeweakL2_dfsf}, it follows from  Lemma \ref{lemme2} that $d_{f,\D_\bu}\Pi^f_{\D_p}s^\alpha_{f}$ converges up to a subsequence in $L^4(0,T;L^2(K_f))$.

From the assumption $d_{f,\D_\bu}(t,\x) \geq d_0(\x)$, $d_{f,\D_\bu}$ is bounded below by a strictly positive constant on $K_f$. Writing that $\Pi^f_{\D_p} s^\alpha_{f} = {1 \over d_{f,\D_\bu}} (d_{f,\D_\bu} \Pi^f_{\D_p} s^\alpha_{f})$, it follows that $\Pi^f_{\D_p} s^\alpha_{f}$ converges in $L^4(0,T;L^2(K_f))$. Since this is true for any $K_f$ compact in $\Gamma$ that does not touch the fractures intersections, and since $\Pi^f_{\D_p} s^\alpha_{f}\in [0,1]$, we deduce that $\Pi^f_{\D_p} s^\alpha_{f}$ converges in $L^4(0,T;L^2(\Gamma))$.
\end{proof}

\subsection{Convergence to a weak solution}\label{subsec:convergence}

\begin{proof}[Proof of Theorem~\ref{thm.conv}]
  The superscript $l$ will be dropped in the proof, and all convergences are up to appropriate subsequences. From Lemma \ref{lemma_apriori} and Proposition \ref{prop_compactnessdfsf}, there exist $\bar d_f\in L^\infty(0,T;L^4(\Gamma))$ 
and $\bar s_f^\alpha\in L^\infty((0,T)\times\Gamma)$ such that
\begin{equation}
  \label{conv_dfsf}
\begin{array}{lll}
  & d_{f,\D_\bu} \rightarrow \bar d_f & \mbox{in } L^\infty(0,T;L^p(\Gamma)),\, 2\leq p < 4, \\[1ex]
  & \Pi^f_{D_p} S^\alpha_f(p_c) \rightarrow \bar s^\alpha_f & \mbox{in } L^4(0,T;L^2(\Gamma)).
\end{array} 
\end{equation}
From Proposition \ref{prop_compactness_Sm}, there exists $\bar s_m^\alpha\in L^\infty((0,T)\times\Omega)$ such that
\begin{equation}
  \label{conv_sm}
\begin{array}{lll}
  & \Pi^m_{D_p} S^\alpha_m(p_c) \rightarrow \bar s^\alpha_m & \mbox{in } L^2(0,T;L^2(\Omega)).
\end{array} 
\end{equation}
The identification of the limit \cite[Lemma~5.5]{BGGLM16}, resulting from the limit-conformity property, can easily be adapted to our definition of $V_0$, with weight $d_0^{\nf 3 2}$ and the use in the definition of limit-conformity of fracture flux functions that are compactly supported away from the tips. Using this lemma and the a priori estimates of Lemma \ref{lemma_apriori}, we obtain $\bar p^\alpha\in L^2(0,T;V_0)$ and  ${\bf g}^\alpha_f \in L^2(0,T;L^2(\Gamma)^{d-1})$, such that the following weak limits hold
\begin{equation}
  \label{conv_pnablap}
\left. 
\begin{array}{lll}
  & \Pi^m_{\D_p} p^\alpha \weakto \bar p^\alpha & \mbox{in } L^2(0,T;L^2(\Omega))\mbox{ weak}, \\
  & \Pi^f_{\D_p} p^\alpha \weakto \gamma \bar p^\alpha & \mbox{in } L^2(0,T;L^2(\Gamma))\mbox{ weak}, \\
  & \nabla^m_{\D_p} p^\alpha \weakto  \nabla \bar p^\alpha & \mbox{in } L^2(0,T;L^2(\Omega)^d)\mbox{ weak}, \\
  & d_0^{\nf 3 2}\nabla^f_{\D_p} p^\alpha \weakto d_0^{\nf 3 2} \nabla_\tau \gamma \bar p^\alpha & \mbox{in } L^2(0,T;L^2(\Gamma)^{d-1})\mbox{ weak}, \\
    & d_{f,\D_\bu}^{\nf 3 2}\nabla^f_{\D_p} p^\alpha \weakto {\bf g}^\alpha_f & \mbox{in } L^2(0,T;L^2(\Gamma)^{d-1})\mbox{ weak}. 
\end{array}
\right.
\end{equation}
Let $\boldsymbol{\varphi} \in C_c^0((0,T)\times\Gamma)^{d-1}$ whose support is contained in $(0,T)\times K$, with $K$ compact set not containing the tips of $\Gamma$. We have
$$
\int_{0}^T\int_\Gamma d_{f,\D_\bu}^{\nf 3 2}\nabla^f_{\D_p} p^\alpha \cdot \boldsymbol{\varphi} ~\d\sigma(\x)\d t \rightarrow \int_{0}^T\int_\Gamma {\bf g}^\alpha_f \cdot \boldsymbol{\varphi}~ \d\sigma(\x)\d t. 
$$
On the other hand, it results from  \eqref{conv_pnablap} and the fact that $d_0$ is bounded away from $0$ on $K$ (because $d_0$ is continuous and does not vanish outside the tips of $\Gamma$) that $\nabla^f_{\D_p} p^\alpha \weakto \nabla_\tau \gamma \bar p^\alpha$ in $L^2(0,T;L^2(K)^{d-1})$. 
Combined with the convergence $d_{f,\D_\bu}^{\nf 3 2} \boldsymbol{\varphi} \rightarrow (\bar d_f)^{\nf 3 2}\boldsymbol{\varphi}$ in $L^{\infty}(0,T;L^2(\Gamma)^{d-1})$ given by \eqref{conv_dfsf}, we infer that 
\begin{equation*}
\int_{0}^T\int_\Gamma d_{f,\D_\bu}^{\nf 3 2}\nabla^f_{\D_p} p^\alpha \cdot \boldsymbol{\varphi}~ \d\sigma(\x)\d t
 \rightarrow 
\int_{0}^T\int_\Gamma (\bar d_f)^{\nf 3 2} \nabla_\tau \gamma \bar p^\alpha \cdot \boldsymbol{\varphi} ~\d\sigma(\x)\d t. 
\end{equation*}
This shows that ${\bf g}^\alpha_f = (\bar d_f)^{\nf 3 2} \nabla_\tau \gamma \bar p^\alpha$ on $(0,T)\times\Gamma$.

Combining the strong convergence of $\Pi^m_{D_p} S^\alpha_m(p_c)= S^\alpha_m(\Pi^m_{D_p} p_c)$ (resp.~of $\Pi^f_{D_p} S^\alpha_f(p_c) = S^\alpha_f(\Pi^f_{D_p} p_c)$), 
the weak convergence of  $\Pi^m_{D_p} p_c$ (resp. $\Pi^f_{D_p} p_c$), and the monotonicity of $S^\alpha_m$ (resp. $S^\alpha_f)$, it results from the Minty trick (see e.g.~\cite[Lemma 2.6]{EGHM13}) that $\bar s_m^\alpha = S_m^\alpha(\bar p_c)$ (resp.~$\bar s_f^\alpha = S_f^\alpha(\gamma \bar p_c)$) with $\bar p_c = \bar p^\g-\bar p^\l$. 

From the a priori estimates of Lemma \ref{lemma_apriori} and the limit-conformity property of the sequence of GDs $(\D_\bu^l)_{l\in \N}$ (see Lemma \ref{lemma_limitconformityDu}), there exists $\bar\bu\in L^\infty(0,T;\U_0)$, such that
\begin{equation}
   \label{conv_unablau}
\left.
\begin{array}{lll}  
  & \Pi_{\D_\bu} \bu \weakto \bar \bu & \mbox{in } L^\infty(0,T;L^2(\Omega)^d) \mbox{ weak $\star$},\\
  & \bbeps_{\D_\bu} (\bu) \weakto \bbeps(\bar\bu) & \mbox{in } L^\infty(0,T;L^2(\Omega,\S_d(\R))) \mbox{ weak $\star$},\\
  & \div_{\D_\bu} \bu \weakto \div(\bar\bu) & \mbox{in } L^\infty(0,T;L^2(\Omega)) \mbox{ weak $\star$},\\
  & d_{f,\D_\bu} = -\jump{\bu}_{\D_\bu} \weakto -\jump{\bar\bu} & \mbox{in } L^\infty(0,T;L^2(\Gamma)) \mbox{ weak $\star$},
\end{array}
\right.
\end{equation}
from which we deduce that $\bar d_f = -\jump{\bar\bu}$ and that $\bbsig_{\D_u}(\bu)$ converges to $\bbsig(\bar \bu)$ in $L^\infty(0,T;L^2(\Omega,\S_d(\R)))$ weak $\star$. 

From the a priori estimates and the closure equations
\eqref{GD_closures}, there exist $\bar \phi_m \in L^\infty(0,T;L^2(\Omega)$ and $\bar p^E_m \in L^\infty(0,T;L^2(\Omega)$ such that  
\begin{equation}
   \label{conv_phipEm}
\left.
\begin{array}{lll}  
  & \phi_\D \weakto \bar \phi_m & \mbox{in } L^\infty(0,T;L^2(\Omega)) \mbox{ weak $\star$},\\
  & \Pi^m_{\D_p} p^E_m \weakto \bar p^E_m & \mbox{in } L^\infty(0,T;L^2(\Omega)) \mbox{ weak $\star$}. 
\end{array}
\right.
\end{equation}
Since $0\le  U_{\rm rt}(z) = \int_0^p z (S^{\g}_{\rm rt})'(z) \d z \leq 2 |p|$ for ${\rm rt} \in \{m,f\}$,
it results from the a priori estimates of Lemma \ref{lemma_apriori} that there exist $\bar p^E_f \in L^2(0,T;L^2(\Gamma))$, $\bar U_f \in L^2(0,T;L^2(\Gamma))$ and $\bar U_m \in L^2(0,T;L^2(\Omega))$ such that
\begin{equation}
   \label{conv_pEfUmf}
\left.
\begin{array}{lll}  
  & \Pi^f_{\D_p} p^E_f \weakto \bar p^E_f & \mbox{in } L^2(0,T;L^2(\Gamma)) \mbox{ weak},\\
  & \Pi^f_{\D_p} U_f(p_c) \weakto \bar U_f & \mbox{in } L^2(0,T;L^2(\Gamma))\mbox{ weak},\\
  & \Pi^m_{\D_p} U_m(p_c) \weakto \bar U_m & \mbox{in } L^2(0,T;L^2(\Omega))\mbox{ weak}.   
\end{array}
\right.
\end{equation}
For ${\rm rt}\in \{\g,\l\}$, it is shown in \cite{DHM16}, following ideas from \cite{DE15}, that $U_{\rm rt}(p) = B_{\rm rt}(S^\g_{\rm rt}(p))$ where $B_{\rm rt}: [0,1]\mapsto \R$ is a convex lower semi-continuous function  with finite limits at $s=0$ and $s=1$ (note that $B_{\rm rt}$ is therefore actually continuous on $[0,1]$).
Since $\Pi^{m}_{\D_p} s^\g_{m}$ converges strongly in $L^2((0,T)\times\Omega)$  to $S^\g_m(\bar p_c)$, it converges a.e.\ in $(0,T)\times\Omega$.
It  results that $B_{m}(\Pi^{m}_{\D_p} s^\g_{m})$  converges a.e.\ in $(0,T)\times\Omega$ to $B_{m}(S^\g_{m}(\bar p_c))$, and hence that 
$\bar U_{m} =  B_{m}(S^\g_{m}(\bar p_c)) = U_{m}(\bar p_c)$. Similarly, $\bar U_{f} =  B_{f}(S^\g_{f}(\gamma \bar p_c)) = U_{f}(\gamma \bar p_c)$. We deduce
that
$$
\bar p^E_m = \sum_{\alpha\in \{\g,\l\}} \bar p^\alpha S^\alpha_m(\bar p_c) - U_m(\bar p_c) \quad \mbox{ and } \quad 
\bar p^E_f = \sum_{\alpha\in \{\g,\l\}} \gamma \bar p^\alpha S^\alpha_f(\gamma \bar p_c) - U_f(\gamma\bar p_c). 
$$
Using the estimate
$$
 \left|U_{\rm rt}(p_2)-U_{\rm rt}(p_1)\right|  = \left|\int^{p_2}_{p_1} z (S^{\g}_{\rm rt})'(z) \d z\right| \leq |p_2-p_1| + |p_2 S^{\g}_{\rm rt}(p_2) - p_1 S^{\g}_{\rm rt}(p_1)|,
$$
the Lipschitz property of $S^{\g}_{\rm rt}$, $\bar p^\alpha_0 \in V_0\cap L^\infty(\Omega)$, $\gamma\bar p^\alpha_0\in L^\infty(\Gamma)$, $\alpha\in \{\g,\l\}$, and the consistency of the sequence of GDs $(\D_p^l)_{l\in \N}$, we deduce that
  \begin{equation}
    \label{conv_pE0}
\left.
\begin{array}{lll}  
  & \Pi^m_{\D_p} p^{E,0}_m \rightarrow \bar p^{E,0}_m & \mbox{in } L^2(\Omega),\\
  & \Pi^f_{\D_p} p^{E,0}_f \rightarrow \bar p^{E,0}_f & \mbox{in } L^2(\Gamma). 
\end{array}
\right.
  \end{equation}
  Then, from Proposition \ref{error_estimate_mechanics} it holds that 
  \begin{equation}
    \label{conv_u0}
\left.
\begin{array}{lll}  
  & \div_{\D_\bu}(\bu^0) \rightarrow \div(\bar\bu^0)  & \mbox{in } L^2(\Omega),\\[1ex]
  & \jump{\bu^0}_{\D_\bu} \rightarrow \jump{\bar\bu^0} = - \bar d_f^0  & \mbox{in } L^2(\Gamma).    
\end{array}
\right.
\end{equation}
It results from  \eqref{conv_phipEm}, \eqref{conv_unablau}, \eqref{conv_pE0} and \eqref{conv_u0} and the definition of $\phi_\D$ that   
$$
\bar \phi_m = \bar \phi_m^0 +  b~\div(\bar\bu-\bar\bu^0) + \frac{1}{M} (\bar p^E_m -\bar p_m^{E,0}). 
$$

Let us now prove that the functions $\bar p^\alpha$, $\alpha\in \{\g,\l\}$, and $\bar\bu$ satisfy the variational formulation \eqref{eq_var_hydro}--\eqref{eq_var_meca} by passing to the limit in the gradient scheme \eqref{eq:GS}.

For $\theta\in C^\infty_c([0,T))$ and $\psi\in C_c^\infty(\Omega)$ let us set, with $P_{\D_p}$ characterised by \eqref{eq:def.PDp},
$$
\varphi = (\varphi^1,\ldots,\varphi^{N}) \in (X^0_{\D_p})^{N} \mbox{ with } \varphi^i = \theta(t_{i-1}) (P_{\D_p}\psi).
$$
From the consistency properties of $(\D_p^l)_{l\in \N}$ with given $r> 8$, we deduce that
\begin{equation}
   \label{conv_thetapDpsi}
\left.
\begin{array}{lllll}
  & \Pi^m_{\D_p}  P_{\D_p}\psi \rightarrow \psi & \mbox{in } L^2(\Omega),\quad &
   \Pi^f_{\D_p}  P_{\D_p}\psi \rightarrow \gamma \psi & \mbox{in } L^2(\Gamma),\\  
  & \Pi^m_{\D_p}\varphi  \rightarrow \theta\psi & \mbox{in } L^\infty(0,T;L^2(\Omega)),\quad
  & \Pi^f_{\D_p} \varphi \rightarrow \theta\gamma\psi & \mbox{in } L^\infty(0,T;L^2(\Gamma)),\\
  & \nabla^m_{\D_p}\varphi  \rightarrow \theta \nabla \psi & \mbox{in } L^\infty(0,T;L^2(\Omega)^d),\quad
  & \nabla^f_{\D_p} \varphi \rightarrow \theta\nabla_\tau \gamma\psi & \mbox{in } L^\infty(0,T;L^r(\Gamma)^{d-1}). 
\end{array}
\right.
\end{equation}

Setting
$$
\left.\begin{array}{lll}
 && T_1 = \dsp \int_0^T \int_\Omega  \delta_t \(\phi_\D \Pi_{\D_p}^m s^\alpha_m \)\Pi_{\D_p}^m \varphi ~\d\x \d t \\[2ex]
 && T_2=  \dsp \int_0^T \int_\Omega \eta_m^\alpha(\Pi_{\D_p}^m s_m^\alpha) \K_m \nabla_{\D_p}^m p^\alpha \cdot   \nabla_{\D_p}^m \varphi  ~\d\x \d t\\[2ex]
 && T_3 = \dsp \int_0^T \int_\Gamma \delta_t \(d_{f,\D_\bu} \Pi_{\D_p}^f s^\alpha_f \)\Pi_{\D_p}^f \varphi ~\d\sigma(\x)\d t\\[2ex]
  && T_4= \dsp \int_0^T \int_\Gamma  \eta_f^\alpha(\Pi_{\D_p}^f s_f^\alpha) {d_{f,\D_\bu}^3 \over 12} \nabla_{\D_p}^f p^\alpha \cdot   \nabla_{\D_p}^f \varphi  ~\d\sigma(\x) \d t\\[2ex]
  && T_5 = \dsp \int_0^T \int_\Omega h_m^\alpha \Pi_{\D_p}^m \varphi ~d\x \d t + \int_0^T \int_\Gamma h_f^\alpha \Pi_{\D_p}^f \varphi ~\d\sigma(\x)\d t, 
  \end{array}\right.
$$
the gradient scheme variational formulation \eqref{GD_hydro} states that
$$
T_1 + T_2 + T_3 + T_4 = T_5. 
$$
For $\omega\in C^\infty_c([0,T))$ and a smooth function ${\bf w}:  \Omega\setminus\overline\Gamma \rightarrow \R^d$ vanishing on $\partial\Omega$ and admitting finite limits on each side of $\Gamma$, let us set 
$$
\bv = (\bv^1,\ldots,\bv^{N}) \in (X^0_{\D_\bu})^{N} \mbox{ with } \bv^i = \omega(t_{i-1}) (P_{\D_\bu}{\bf w})
$$
where $P_{\D_\bu}\mathbf{w}$ realizes the minimum in the definition \eqref{eq:def.SDu} of $\mathcal S_{\D_\bu}(\textbf{w})$.
From the consistency properties of $(\D_\bu^l)_{l\in \N}$, we deduce that 
\begin{equation}
   \label{conv_omegaw}
\left.
\begin{array}{lll}  
  & \Pi_{\D_\bu}\bv  \rightarrow \omega\psi & \mbox{in } L^\infty(0,T;L^2(\Omega)^d),\\ 
  & \bbeps_{\D_\bu}(\bv)  \rightarrow \omega \bbeps({\bf w}) & \mbox{in } L^\infty(0,T;L^2(\Omega,\S_d(\R))),\\
  & \jump{\bv}_{\D_\bu}  \rightarrow \omega \jump{{\bf w}} & \mbox{in } L^\infty(0,T;L^2(\Gamma)). 
\end{array}
\right.
\end{equation}
Setting 
$$
  \left.\begin{array}{lll}
    && T_6 =  \dsp \int_0^T \int_\Omega \( \bbsig_{\D_u}(\bu) : \bbeps_{\D_\bu}(\bv)
    -   b(\Pi_{\D_p}^m p_m^E)  \div_{\D_\bu}(\bv)\)d\x \d t,\\[2ex]
    && T_7 =   \dsp \int_0^T \int_\Gamma (\Pi_{\D_p}^f p_f^E)  \jump{\bv}_{\D_\bu} \d\sigma(\x)\d t,\\[2ex]
    && T_8 = \dsp \int_0^T \int_\Omega \mathbf{f} \cdot \Pi_{\D_\bu} \bv ~\d\x \d t.  
  \end{array}\right.
$$
the gradient scheme variational formulation \eqref{GD_meca} states that
$$
T_6 + T_7 = T_8. 
$$
Using a discrete integration by part \cite[Section D.1.7]{gdm}, we have $T_1 = T_{11} + T_{12}$ with 
\begin{align*}
& T_{11} = \dsp - \int_0^T \int_\Omega  \phi_\D (\Pi_{\D_p}^m s^\alpha_m) (\Pi_{\D_p}^m P_{\D_p} \psi) \theta'(t) ~\d\x \d t, \\
& T_{12} = - \int_\Omega  (\Pi^m_{\D_p} I_{\D_p}^m \bar \phi^0)  (\Pi_{\D_p}^m S^\alpha_m(I_{\D_p}\bar p^\alpha_0))  (\Pi_{\D_p}^m P_{\D_p}\psi) \theta(0) ~\d\x.  
\end{align*}
Using \eqref{conv_thetapDpsi} and \eqref{conv_phipEm}, and that $\Pi_{\D_p}^m s^\alpha_m \in [0,1]$ converges to $S^\alpha_m(\bar p_c)$ a.e.\ in $(0,T)\times\Omega$ (this follows from \eqref{conv_sm}), it holds that
$$
T_{11} \rightarrow - \int_0^T \int_\Omega  \bar \phi_m S^\alpha_m(\bar p_c)  \psi \theta'(t) ~\d\x \d t. 
$$  
Using  \eqref{conv_thetapDpsi}, that $\Pi^m_{\D_p} I_{\D_p}^m \bar \phi^0$ converges in $L^2(\Omega)$ to $\bar \phi^0$ and that 
$\Pi_{\D_p}^m S^\alpha_m(P_{\D_p}\bar p^\alpha_0) \in [0,1]$ converges a.e.\ in $\Omega$ to $S^\alpha_m(\bar p^\alpha_0)$, we deduce that 
$$
T_{12} \rightarrow - \int_\Omega  \bar \phi^0  S^\alpha_m(\bar p^\alpha_0)  \psi \theta(0) ~\d\x.  
$$
Writing $T_3 = T_{31} + T_{32}$ with 
\begin{align*}
& T_{31} = \dsp - \int_0^T \int_\Gamma  d_{f,\D_\bu} (\Pi_{\D_p}^f s^\alpha_f) (\Pi_{\D_p}^f P_{\D_p} \psi) \theta'(t) ~\d\sigma(\x) \d t, \\
& T_{32} = \int_\Gamma  \jump{\bu^0}_{\D_\bu}  (\Pi_{\D_p}^f S^\alpha_f( I_{\D_p} \bar p^\alpha_0))  (\Pi_{\D_p}^f P_{\D_p}\psi) \theta(0) ~\d\sigma(\x),   
\end{align*}
we obtain, using similar arguments and \eqref{conv_u0}, that
$$
T_{31} \rightarrow \dsp - \int_0^T \int_\Gamma  \bar d_f S^\alpha_f(\gamma\bar p_c) \gamma\psi \theta'(t) ~\d\sigma(\x) \d t,
$$
and
$$
T_{32} \rightarrow - \int_\Gamma  \bar d_f^0  S^\alpha_f(\gamma \bar p^\alpha_0) \gamma \psi \theta(0) ~\d\sigma(\x).  
$$
Using that $0\le \eta^\alpha_m(\Pi_{\D_p}^m s_m^\alpha)\leq \eta^\alpha_{\rm m,max}$, the continuity of $\eta_m^\alpha$,
the convergence of $\Pi_{\D_p}^m s_m^\alpha$ a.e.\ in $(0,T)\times \Omega$ to $S^\alpha_m(\bar p_c)$, 
\eqref{conv_pnablap} and \eqref{conv_thetapDpsi}, it holds that
$$
T_2 \rightarrow \dsp \int_0^T \int_\Omega \eta_m^\alpha(S_m^\alpha(\bar p_c)) \K_m \nabla \bar p^\alpha \cdot   \theta \nabla \psi  ~\d\x \d t.
$$
The convergence 
$$
T_4 \rightarrow   \int_0^T \int_\Gamma \eta^\a_f ( S^{\a}_f(\gamma \bar p_c)) {\bar d_f^{\;3}\over 12}  \nabla_\tau \gamma \bar p^\a \cdot \theta \nabla_\tau \gamma \psi~  \d\sigma(\x) \d t
$$
is established using $0\le \eta^\alpha_f(\Pi_{\D_p}^f s_f^\alpha)\leq \eta^\alpha_{\rm f,max}$, the continuity of $\eta_f^\alpha$, the convergence of $\Pi_{\D_p}^f s_f^\alpha$ a.e.\ in $(0,T)\times \Gamma$ to $S^\alpha_f(\gamma\bar p_c)$, combined with the weak convergence of $d_{f,\D_\bu}^{\nf 3 2} \nabla_{\D_p}^f p^\alpha$ to $\bar d_{f}^{\nf 3 2} \nabla_\tau\gamma \bar p^\alpha$ in $L^2((0,T)\times\Gamma)^{d-1}$, the strong convergence of $d_{f,\D_\bu}^{\nf 3 2}$ to $\bar d_{f}^{\nf 3 2}$ in
$L^s((0,T)\times\Gamma)$ for all $2\leq s < {8\over 3}$ (resulting from \eqref{conv_dfsf}), and the strong convergence \eqref{conv_thetapDpsi} of $\nabla_{\D_p}^f \varphi$ to $\theta \nabla_\tau\gamma \psi$ in $L^\infty(0,T;L^r(\Gamma))$ with $r>8$. 

The convergence
$$
T_5 \rightarrow \dsp \int_0^T \int_\O h_m^\a  ~\theta \psi  ~\d\x \d t + \int_0^T \int_\G h_f^\a ~\theta (\gamma  \psi)~  \d\sigma(\x) \d t
$$
is readily obtained from \eqref{conv_thetapDpsi}. The following convergences of $T_6$, $T_7$, $T_8$
$$
\begin{aligned}
T_6 & \rightarrow \dsp \int_0^T \int_\O \( \bbsig(\bar \bu): \bbeps({\bf w})\omega - b \bar p_m^E \div( {\bf w}) \omega\) ~\d\x \d t,\\
T_7 & \rightarrow \int_0^T \int_\G \bar p_f^E ~\jump{ {\bf w}}\omega  ~\d\sigma(\x) \d t, \\
T_8 & \rightarrow \int_0^T \int_\Omega \mathbf{f}\cdot {\bf w} \omega~ \d\x \d t  
\end{aligned}
$$
classically result from the strong convergences \eqref{conv_omegaw} combined with the weak convergences \eqref{conv_unablau}.

Using the above limits in $T_1+T_2+T_3+T_4=T_5$ and $T_6+T_7=T_8$ concludes the proof that $\bar p^\alpha$, $\alpha\in \{\g,\l\}$, and $\bar\bu$ satisfy the variational formulation \eqref{eq_var_hydro}--\eqref{eq_var_meca}.
\end{proof}

\section{Two-dimensional numerical example}\label{sec:numerical.example}

The objective of this section is to numerically investigate the convergence of the discrete solutions on a simple geometrical configuration based on a cross-shaped fracture network. We refer to \cite{bonaldi2020} for the presentation of a more advanced application to the desaturation by suction at the interface between a ventilation tunnel and a Callovo-Oxfordian argilite fractured storage rock.

\subsection{Setting}

Let us consider 
the system \eqref{eq_edp_hydromeca}--\eqref{closure_laws}
in the square domain $\Omega = (0,L)^2$, with $L=100\,\text{m}$, lying in the $xy$-plane and containing a cross-shaped fracture network $\Gamma$ made up of four fractures 
 (cf.~Figure~\ref{test_case}), each of length $\frac{L}{8}$, aligned with the coordinate 
 axes and intersecting at the center of the domain $(\frac{L}{2},\frac{L}{2})$. More precisely, the fracture network is defined as follows:
$\overline\Gamma = \bigcup_{i=1}^4 \overline\Gamma_i$, where $\Gamma_1 = (\frac{3}{8}L,\frac{L}{2})\times 
\{\frac{L}{2}\}$, $\Gamma_2 =  (\frac{L}{2},\frac{5}{8}L)\times \{\frac{L}{2}\}$,
$\Gamma_3 =   \{\frac{L}{2}\} \times (\frac{3}{8}L,\frac{L}{2})$, and
$\Gamma_4 =   \{\frac{L}{2}\} \times (\frac{L}{2},\frac{5}{8}L)$.
\begin{figure}
\centering
\includegraphics[scale=.7]{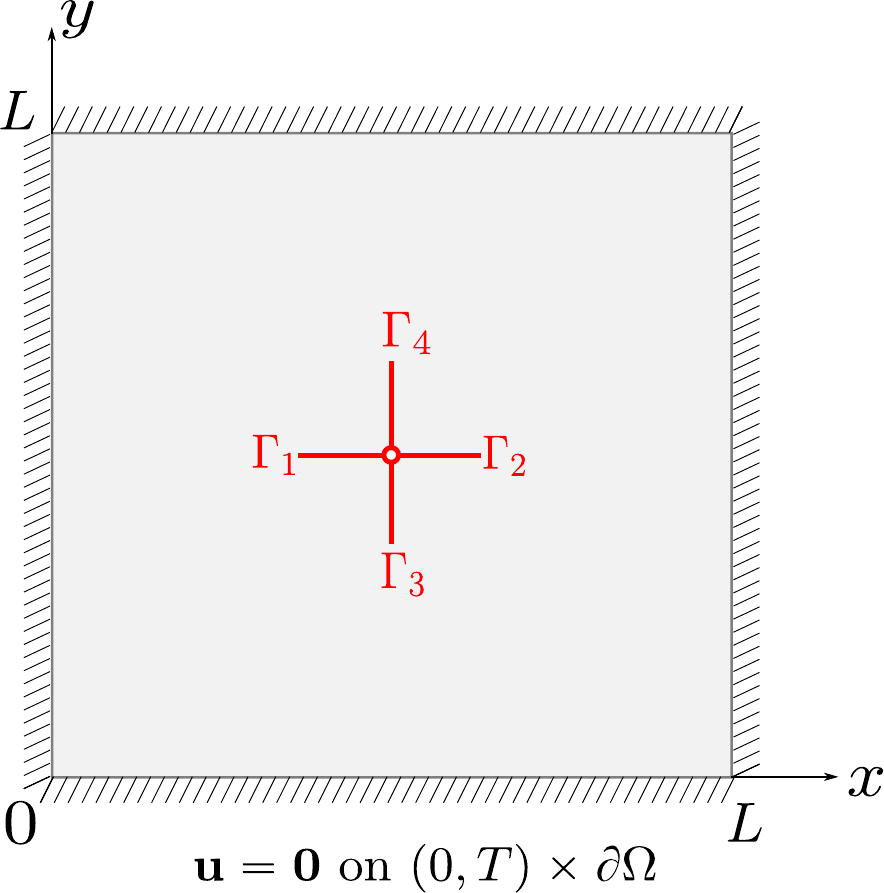}
\caption{Computational domain.}
\label{test_case}
\end{figure}
The data set employed is inspired by~\cite[Section~5.1.2]{dual-porosity}.
The matrix and fracture network have the following mobility laws:
\begin{equation}
  \label{def_mob}
  \eta_{m}^\alpha(s^\alpha) = \frac{(s^\alpha)^2}{\mu^\alpha}, \quad \quad \eta_f^\alpha(s^\alpha) = \frac{s^\alpha}{\mu^\alpha}, \quad \alpha \in \{\l,\g\},
\end{equation}
where $\mu^\l = 10^{-3}\,\rm{Pa{\cdot}s}$ and $\mu^\g = 1.851{\cdot}10^{-5}\,\rm{Pa{\cdot}s}$ are the dynamic viscosities of the wetting and non-wetting phases, respectively. Notice that $\eta_m^\alpha$ and $\eta_f^\alpha$ do not satisfy the assumptions of our analysis, as they are not bounded below by a strictly positive number; these choices are however physically relevant, and as the test shows, do not seem to impair the convergence of the numerical scheme. A non-degenerate regularization of these mobilities is also investigated below. The function yielding the saturation in both rock types in terms of the capillary pressure is provided by
 Corey's law:
$$s_{\rm rt}^\g = S_{\rm rt}^\g (p_c) = \max\(1 - \exp\(-\frac{p_c}{R_{\rm rt}}\),0\),
\quad {\rm rt}\in \{m,f\},$$
with $R_m=10^4\,\rm{Pa}$ and $R_f = 10\,\rm{Pa}$. The matrix is homogeneous and isotropic,
i.e.~$\mathbb K_m = \Lambda_m \mathbb I$, characterized by a permeability $\Lambda_m = 3{\cdot}10^{-15}\,\rm m^2$, an initial porosity $\phi_m^0 = 0.2$, effective Lam\'e parameters $\lambda = 833\,{\rm MPa}$, $\mu = 1250\,{\rm MPa}$, effective (drained) bulk modulus\footnote{%
In general, $K_{\rm dr} = \lambda + {2\mu}/{d}$, where $d\in\{2,3\}$ is the space dimension.}
$K_{\rm dr} = \lambda + \mu = 2083\,{\rm MPa}$, and solid grain bulk modulus $K_{\rm s} = 11244\,{\rm MPa}$. From these, one can infer the values of the Biot coefficient $b=1-\frac{K_{\rm dr}}{K_{\rm s}}\simeq0.81$, and of the Biot modulus $M=\frac{K_{\rm s}}{b-\phi^0_m}\simeq18.4\,{\rm GPa}$.
Since we consider a horizontal domain with no gravity effect, we set ${\bf f} = {\bf 0}$ in $\Omega$ and no gravity term appears in the Darcy laws as in \eqref{eq_edp_hydromeca}. The domain is assumed to be clamped all over its boundary, i.e.~${\bf u} = {\bf 0}$ on $(0,T)\times\del\Omega$; for the flows, we impose a wetting saturation $s_m^\l = 1$ on the north side of the boundary $(0,T)\times((0,L)\times\{L\}) $, whereas the remaining part of the boundary is considered as impervious (${\bf q}_m^\alpha\cdot\bf n = 0, \alpha\in\{\g,\l\}$).
The system is subject to the initial conditions $p_{0}^\g = p_{0}^\l= 10^5\,{\rm Pa}$, which in turn results in an initial  saturation $s_{0,{\rm rt}}^\g=0$,  ${\rm rt}\in \{m,f\}$. 
The final time is set to $T=1000\,{\rm days}=8.64{\cdot}10^7\,\rm{s}$. The system is excited by the following source term, representing injection of non-wetting fluid at the center of the fracture network:
$$h_f^\g(t,\x) = \frac{g(\x)}{\dsp \int_\Gamma g(\x)\, \d\sigma(\x)}\frac{V_{\rm por}}{T/5},\quad (t,\x)\in
(0,T)\times \Gamma,$$
where $V_{\rm por} =  \int_\Omega \phi^0_m(\x)\,\d\x$ is the initial porous volume and 
$g(\x) = e^{-\beta |(\x-\x_0)/L|^2},\ \x_0 = (\frac{L}{2},\frac{L}{2})$, with $\beta=1000$ and $|{\cdot}|$ the Euclidean norm.
The remaining source terms $h^\l_f$ and $h^\alpha_m$, $\alpha\in\{\l,\g\}$, are all set to zero.

\begin{figure}
\centering
\includegraphics[scale=1.5]{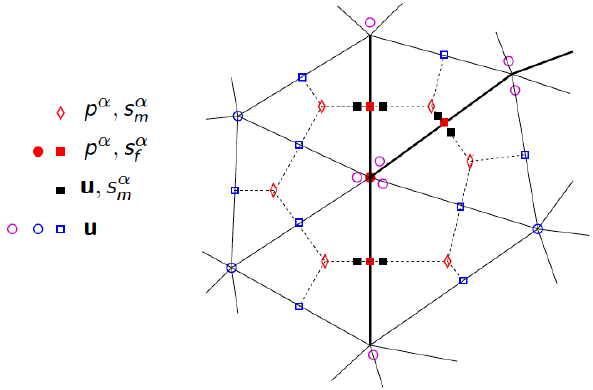}
\caption{Example of admissible triangular mesh with three fracture edges in bold. The dot lines joining each cell center to the center of each of its edges are assumed orthogonal to the edge. The discrete unknowns are presented for the two-phase flow and the mechanics. Note that the discontinuities of the saturations and of the displacement are captured at matrix fracture interfaces. The matrix and fracture saturations $s^\alpha_m$, $s^\alpha_f$ at matrix--fracture interfaces are computed using a single primary unknown parametrizing the capillary pressure graphs (cf.~\cite{gem.aghili}).  Note also that additional nodal unknowns are defined at intersections of at least three fractures.}
\label{incognite}
\end{figure}

As mentioned in the introduction, the GDM framework covers many possible schemes for both the flow and mechanical components of the model. For the flow, one would typically consider finite volume methods (or mixed finite elements), such as the low-order two- and multi-point flux approximations or hybrid mimetic mixed schemes \cite[Chapters 12, 13]{gdm}; even though high-order finite volume methods such as the hybrid high-order scheme \cite{hho-book} or non-conforming virtual elements \cite{Ayuso-de-Dios.Lipnikov.ea:16} also fit into the GDM \cite{Di-Pietro.Droniou.ea:18}, their usage in the current context seem less justified given the expected low regularity of the solution. Given that our simulations are done on triangular meshes, we opted to discretise the flow part using the cheap and robust Two-Point Flux Approximation (TPFA).
For the elasticity equation in~\eqref{eq_edp_hydromeca}, standard conforming finite element methods in standard displacement formulation as well as other more advanced techniques (such as stabilised nodal strain formulation or Hu-Washizu-based formulations) are known to fit in the GDM  \cite{DL14}. Our choice was on the second order $\P_2$ finite element in displacement formulation in the matrix~\cite{daim.et.al,jeannin.et.al}, adding supplementary unknowns on the fracture faces to account for the discontinuities. It provides a better accuracy than $\P_1$ finite element especially on the normal stresses at fracture tips and intersections.

The adaptation of the TPFA discretization to the hybrid-dimensional two-phase Darcy flow model follows \cite{gem.aghili} using  \mbox{$mf$-linear} $m$-upwind model for matrix-fracture interactions. However, unlike \cite{gem.aghili}, we consider here a centered approximation of the mobilities, and the scheme used in the test can therefore be written as a gradient scheme \eqref{GD_hydro}--\eqref{GD_meca}.
The~discrete unknowns for the phase pressures, the phase saturations and for the displacement field are shown in Figure~\ref{incognite}.
The computational domain $\Omega$ is decomposed using \emph{admissible} triangular meshes for the TPFA scheme (cf.~\cite[Section~3.1.2]{finite.vol} and the example Figure~\ref{incognite}). 
Let $n\in\N^\star$ denote the time step index. The time stepping is adaptive, defined as
$$\dtn = \min \{ {\varrho}\dtnmun , \Delta t^{\max} \},$$
where $\dtzero=0.025\,$days is the initial time step, $\Delta t^{\max} = 5$ days is the maximal time step (except for the finest mesh for which it is set to $\Delta t^{\max} = 2$ days), and ${\varrho} = 1.1$. At each time step, the flow unknowns
are computed by a Newton-Raphson algorithm. At each Newton-Raphson iteration, the Jacobian matrix is computed analytically and the linear system is solved using a GMRes iterative solver.
The time step is reduced by a factor 2 whenever the Newton-Raphson algorithm does not converge within 50 iterations, with the stopping criteria defined by the relative residual norm lower than $10^{-5}$ or a maximum normalized variation of the primary unknowns lower than $10^{-4}$. 
On the other hand, given the matrix and fracture equivalent pressures $p_m^E$ and $p_f^E$, the displacement field $\bu$ is computed using the direct solver MA48 (see~\cite{ma48}). Following \cite{BM2000,daim.et.al,jeannin.et.al,KTJ11,wheeler.mikelic,GKW16}, the coupling between the two-phase Darcy flow and the mechanical deformation is solved by means of a \emph{fixed-point} algorithm. This algorithm computes the matrix porosity and the fracture aperture, using discrete versions of the coupling laws \eqref{closure_laws}, at each time step and fixed-point iteration. The algorithm is summarized in the following scheme, where $k$ denotes the current fixed-point iteration and $n$ the current time step.
 \begin{tcolorbox}
	\textbf{Iterative coupling algorithm}
	\smallskip
	\\
	At each time step $n$, for $k=1,\dots$, until convergence, solve the following Darcy and mechanical subproblems: 
	\\ [-7.5pt]
	\begin{enumerate}[label=(\roman*),leftmargin=15pt]
	\item \textbf{Compute} $p_{\rm rt}^{\a,n,k}$, $s_{\rm rt}^{\a,n,k}$, $\a \in \{\l,\g\}$, ${\rm rt}\in \{m,f\}$, solving
	the Darcy flow model using $d_f^{n,k-1}$ in the fracture conductivity and the following porosity and fracture aperture in the accumulation term:
	\\[1ex]
	 $\left\{
	\begin{aligned}
	\phi_m^{n,k} - \phi_m^{n-1} & = C_{r,m} (p_m^{E,n,k}-p_m^{E,n,k-1}) + b\,\div(\bu^{n,k-1}-\bu^{n-1})
	+ \frac{1}{M}(p_m^{E,n,k} - p_m^{E,n-1}),\\
	d_f^{n,k} - d_f^{n-1} & = C_{r,f} (p_f^{E,n,k}-p_f^{E,n,k-1}) - \jump{\bu^{n,k-1}-\bu^{n-1}}.
	\end{aligned}
	\right.
	$ \\[1ex]
	\label{itm:1}
	\item \textbf{Compute} the displacement field $\bu^{n,k}$ using the equivalent pressures $p_m^{E,n,k}$ and
	$p_f^{E,n,k}$ computed at step~\ref{itm:1}.
	\end{enumerate}
	\bigskip
	{\bf Initialization} \smallskip \\
	For given $n > 1$, set
	$$\left\{
	\begin{aligned}
	\frac{p_{\rm rt}^{E,n,0}-p_{\rm rt}^{E,n-1}}{\dtnmun} & = \frac{p_{\rm rt}^{E,n-1,0}-p_{\rm rt}^{E,n-2}}{\dtnmdeux},
	\ \ {\rm rt}\in\{m,f\},\\
	\frac{\bu^{n,0}-\bu^{n-1}}{\dtnmun} & = \frac{\bu^{n-1}-\bu^{n-2}}{\dtnmdeux};
	\end{aligned}
	\right.
	$$
	For $n=1$, set
	$$\left\{
	\begin{aligned}
	p_{\rm rt}^{E,-1} & = p_{\rm rt}^{E,0},
	\ \ {\rm rt}\in\{m,f\},\\
	\bu^{-1} &  = \bu^0.
	\end{aligned}
	\right.
	$$
\end{tcolorbox}
Here, $C_{r,m}$ and $C_{r,f}$ are positive relaxation parameters mimicking the rock compressibility (see e.g.~\cite{BM2000,daim.et.al,jeannin.et.al,KTJ11,wheeler.mikelic,GKW16}). For our numerical simulations, we choose $C_{r,m}=\frac{16 b^2}{2\mu + 2\lambda}$ (cf.~\cite{wheeler.mikelic}), and $C_{r,f}=\widetilde d_f C_{r,m}$ with $\widetilde d_f=10^{-3}\,{\rm m}$. The convergence of this fixed-point algorithm is achieved if the relative norm of the displacement field increment between two successive iterations is lower than $10^{-5}$. 

\subsection{Numerical convergence}

To verify the convergence of the method, we take into account six refined admissible triangular grids with $N = N_0$, $4N_0$, $16N_0$, $64N_0$, $256N_0$, $1024N_0$ cells, $N_0=224$.
All the numerical experiments of this subsection consider the centered approximation of the degenerate mobilities \eqref{def_mob}.
  The non-degenerate regularization consisting in replacing the mobilities with
  $$ {\mu^\alpha \eta_\rt^\alpha(s^\alpha) + \epsilon  \over \mu^\alpha(1 + \epsilon)} \quad \mbox{(for $\rt\in \{m,f\}$ and $\alpha\in \{\l,\g\}$)}$$
 has also been investigated, and found to exhibit significant differences, compared to the degenerate case, mainly on the matrix saturations and only for $\epsilon\geq 10^{-3}$; the differences are small for $\epsilon=10^{-4}$ and not observable for $\epsilon\le 10^{-5}$.

Figure~\ref{conv_u_sg} shows the convergence of the displacement field and gas saturation profiles along the line $y=55\,$m, intersecting the vertical fracture, computed at the final time for the first five grids. In addition, we consider a reference solution (denoted with the subscript ${\rm ref}$) computed on the finest (sixth) grid, made up by $1024N_0=229376$ cells, and used to showcase the time histories of the solution as well as to compute the time histories of the relative errors for each grid. Figure~\ref{df_time} shows the variation with respect to the curvilinear abscissa ($x$ or $y$, depending on the orientation) of the initial and final apertures for the fractures in the cross-shaped network, based on the reference solution. Note that the non-symmetry of the $y$ plot results from the output boundary condition on the north-side. At time $t=0$, the widths of both $x$- and $y$-oriented fractures coincide. Figure~\ref{Ufield} displays the final non-wetting matrix pressure and saturation computed on the fifth grid; as expected, the non-wetting fluid accumulates at the tips, flows through the fracture network and is attracted towards the upper open boundary.
Figure~\ref{ref_sols} showcases the time histories of the average of some relevant physical quantities computed based on the reference solution (the average of $a$ is denoted by $a^\star$). In particular, we notice the increase in width for the fracture network as a result of the gas injection, followed by a decrease after attaining a maximum due to an increasing gas matrix mobility in the neighborhood of the fractures. The same remark holds for the equivalent pressure $p^E_m$. The mean saturation in the matrix, as expected, grows linearly with time until the gas front reaches the upper boundary.
To illustrate the spatial convergence of the scheme, Figure \ref{conv_u_sg} plots on 4 meshes the cuts at $y=55$ m of both components of the displacement field and of the matrix non-wetting saturation. The non-monotone profile of the saturation cut results from the fronts propagating from the different tips of the fractures.
Figure~\ref{relative_errors} shows the convergence of the errors 
$$\left({\int_0^T (a_N^\star(t)-a^\star_{\rm ref}(t))^2dt \over \int_0^T a^\star_{\rm ref}(t)^2dt}\right)^{1\over 2},
$$  as a function of the mesh step for the first five meshes and $a=d_f, s^{\g}_m$, $p^E_m$. Computations are carried out, again, using averaged quantities ($a^\star_N$ denotes the spatial average of quantity $a$ computed using $N$ triangular elements). A similar convergence rate is observed for all quantities. 
Finally, we give an insight into the performance of our method in Table~\ref{perfs}, where
\begin{itemize}
\item NbCells is the number of cells of the mesh,
\item N$_{\Delta t}$ is the number of successful time steps,
\item N$_{\text{Newton}}$ is the total number of Newton-Raphson iterations,
\item N$_{\text{GMRes}}$ is the total number of GMRes iterations,
\item N$_{\text{FixedPoint}}$ is the total number of fixed point iterations,
\item CPU (s) is the CPU time of the simulation in seconds.   
\end{itemize}
The iterative coupling algorithm exhibits good robustness with respect to the mesh size, and has a linear convergence behaviour as proved in \cite{GKW16} in the linear case.  As expected for incompressible fluids in fractured porous media \cite{GKW16}, the convergence rate is however very sensitive to small initial time steps. This issue is shown in \cite{BBDMEcmor20} to be efficiently solved by using a Newton Krylov acceleration of the fixed-point algorithm.
\begin{table}
\centering
{ 
  {
    \begin{tabular}{|c|c|c|c|c|c|}
      \hline
      NbCells & {N}$_{\Delta t}$  &  N$_{\text{Newton}}$  & N$_{\text{GMRes}}$ & N$_{\text{FixedPoint}}$ & CPU (s)\\ \hline
     $N_0$ &  246    & 11902  & 81037  & 11163  & 23  \\ \hline      
     4$N_0$ &  246    & 4685  & 47352  & 4234  & 35 \\ \hline
     16$N_0$ &  246    & 4626  & 53870 & 4138 & 130\\ \hline
     64$N_0$ & 246    & 4713  & 65293 & 4063 & 500\\ \hline
     256$N_0$ & 246   & 4951  & 88106 & 4062 & 2600\\ \hline
     1024$N_0$ & 537   & 5788  & 125495  & 4147 & 14500\\ \hline
    \end{tabular}
    }
  }
    \caption{Performance of the method with the centered scheme in terms of the number of mesh elements, the number of successful time steps, the total number of Newton-Raphson iterations, the total number of GMRes iterations, the total number of fixed-point iterations, and the CPU time.}
      \label{perfs}
\end{table}
\begin{figure}
\centering
\subfloat[$p_m^\g(T;x,y)$]{\includegraphics[scale=.15]{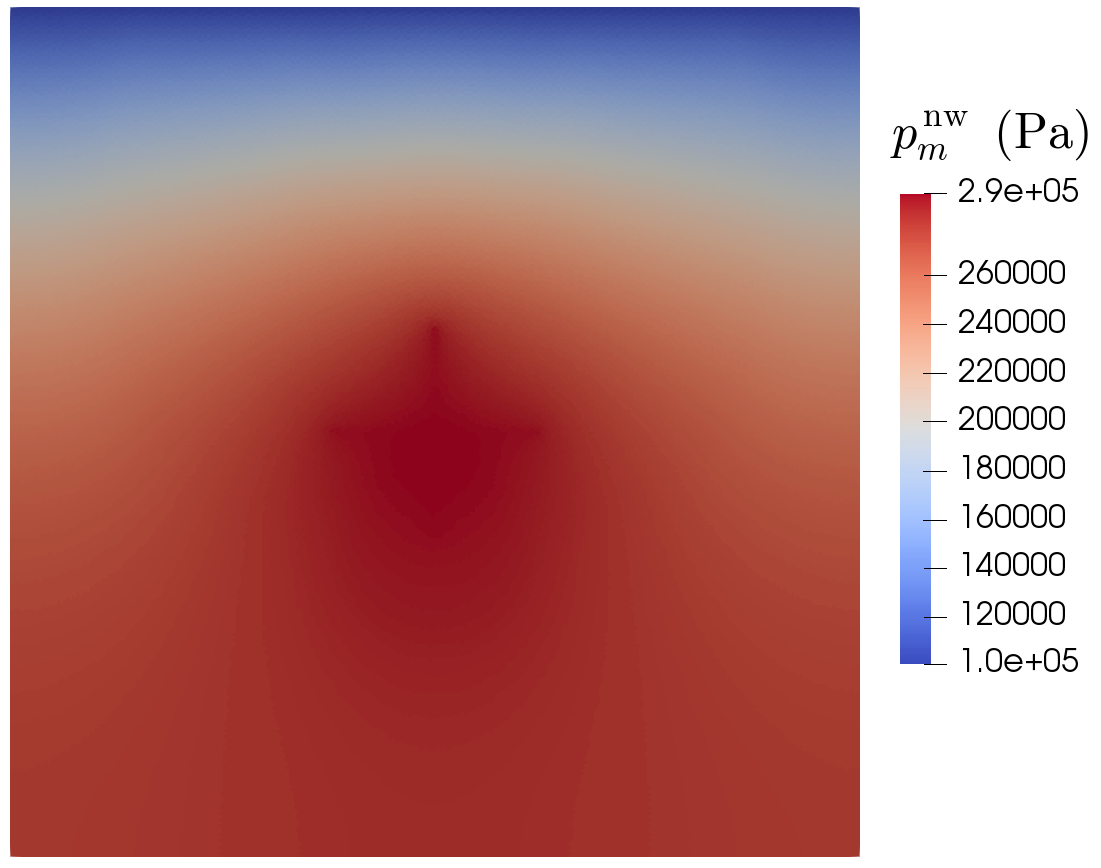}} \  \
\subfloat[$s_m^\g(T;x,y)$]
{{{\includegraphics[scale=.15]{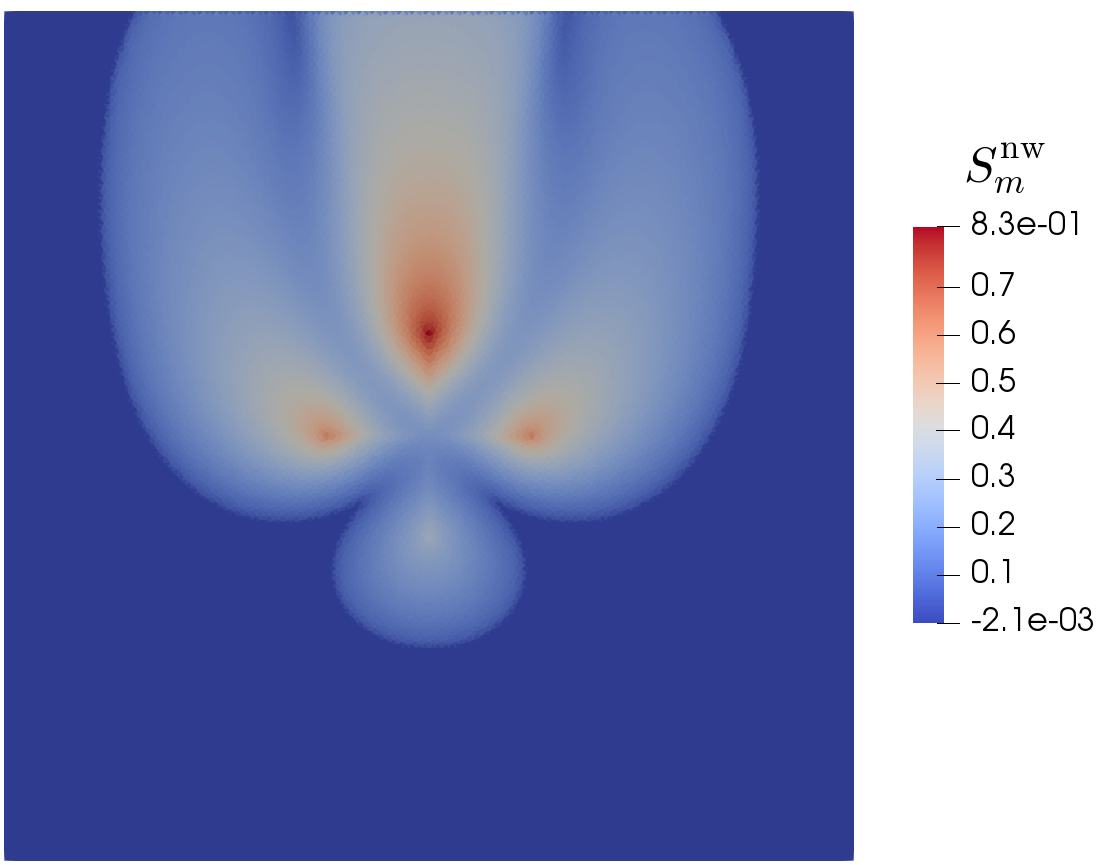}}}}\\
\subfloat[$u_1(T;x,y)$]{{\ \ \includegraphics[scale=.15]{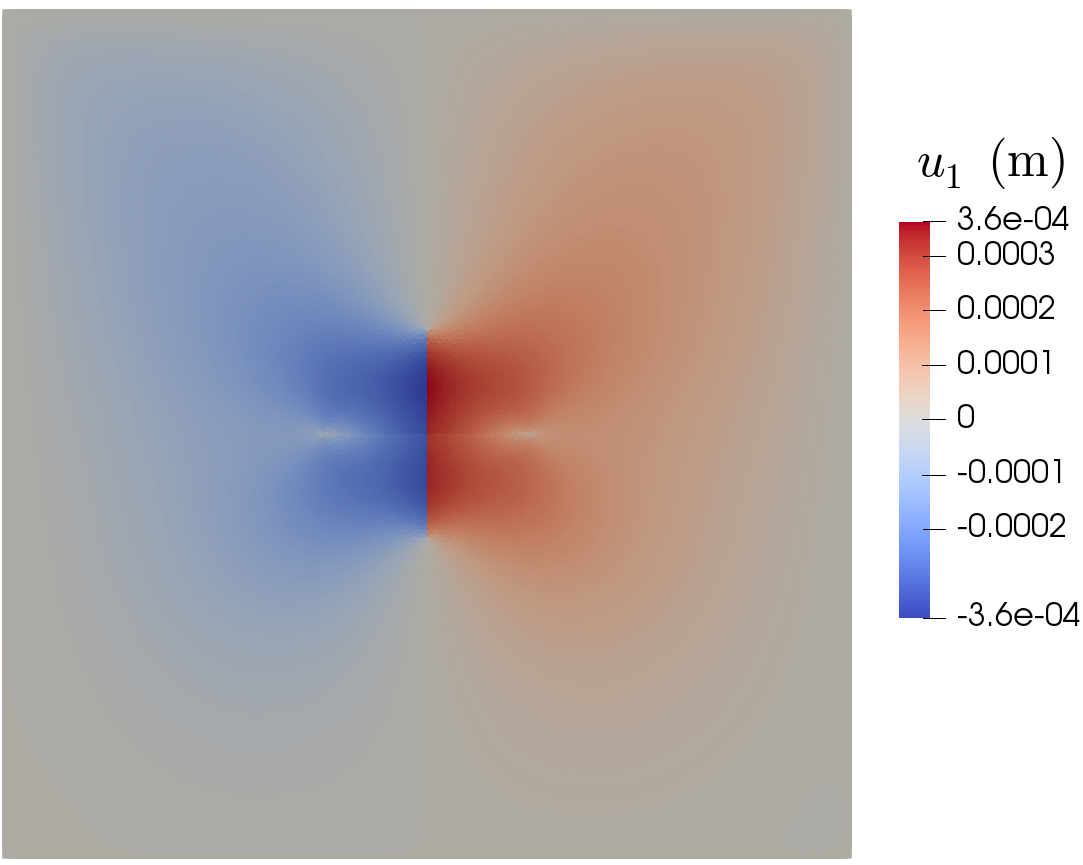}}} \ \ 
\subfloat[$u_2(T;x,y)$]{\includegraphics[scale=.15]{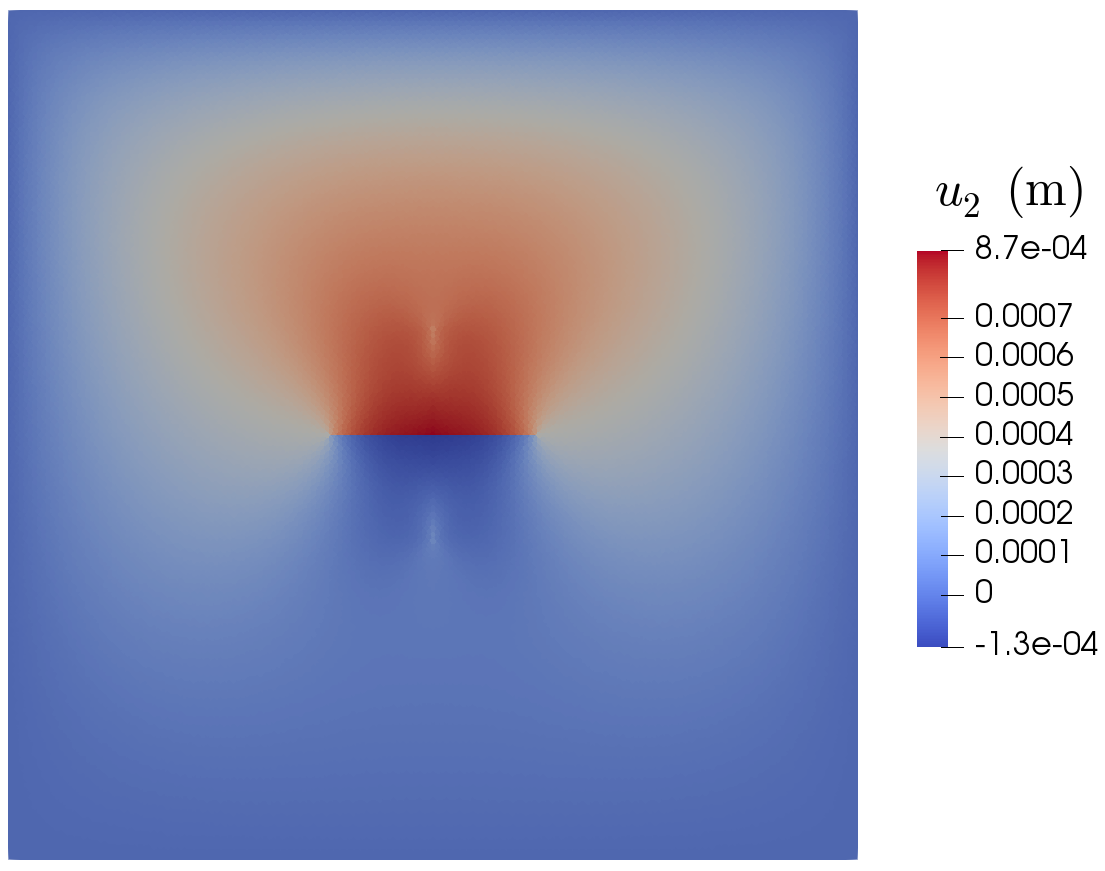}}
\caption{Final non-wetting matrix pressure and saturation (a)-(b), and final (c)-(d) displacement field on the fifth mesh of size 57344 cells.}
\label{Ufield}
\end{figure}
\begin{figure}
\hspace*{-2cm} 
\centering
\includegraphics[scale=.75]{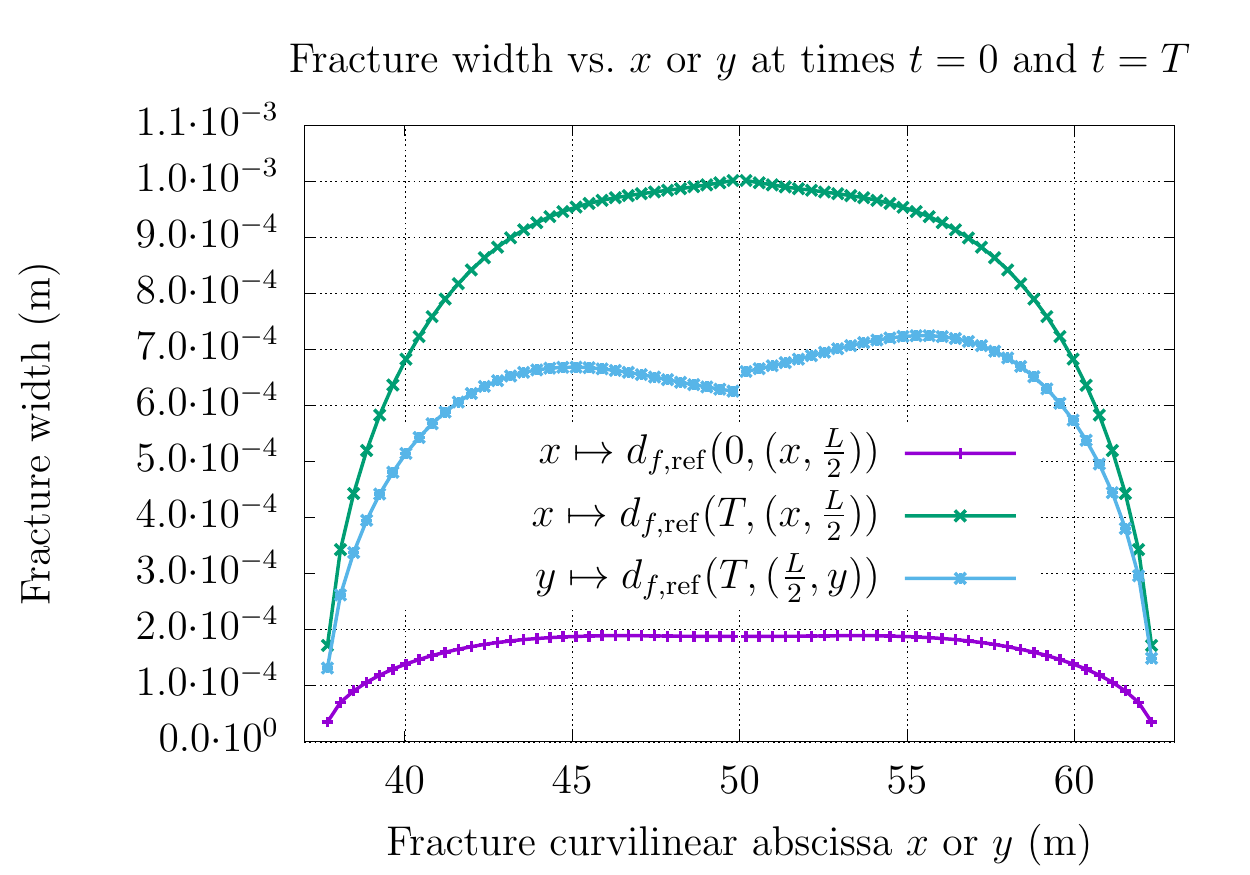}
\caption{Initial and final widths of the $x$- and $y$-oriented fractures vs.~corresponding curvilinear abscissae, computed using the finest grid (reference solution). The initial width for both the $x$- and $y$-oriented fractures is the same.}
\label{df_time}
\end{figure}
\begin{figure}
\centering
\subfloat[]{\includegraphics[keepaspectratio=true,scale=.165]{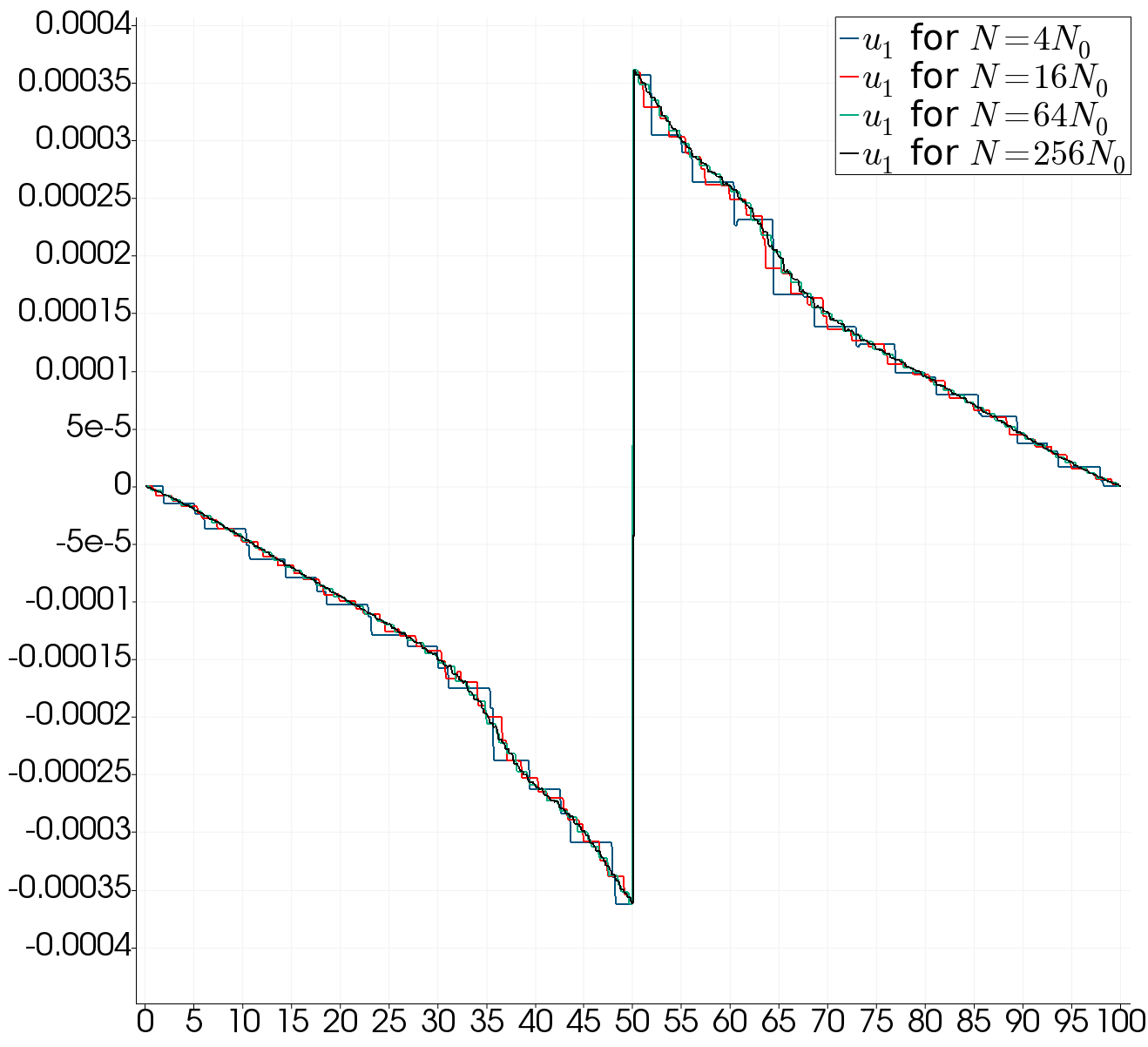}}
\subfloat[]{\includegraphics[keepaspectratio=true,scale=.165]{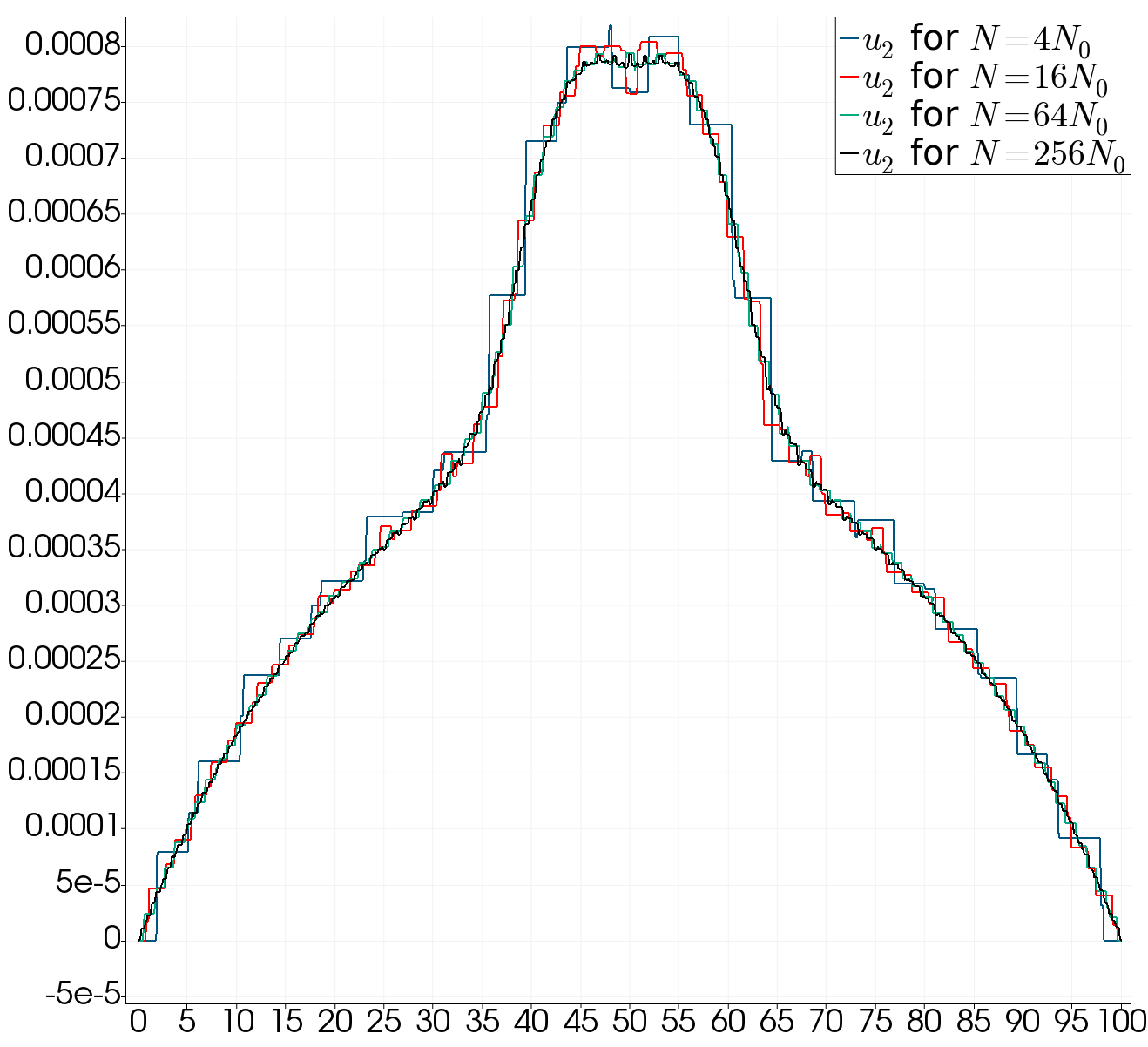}}\\
\subfloat[]{\includegraphics[keepaspectratio=true,scale=.2]{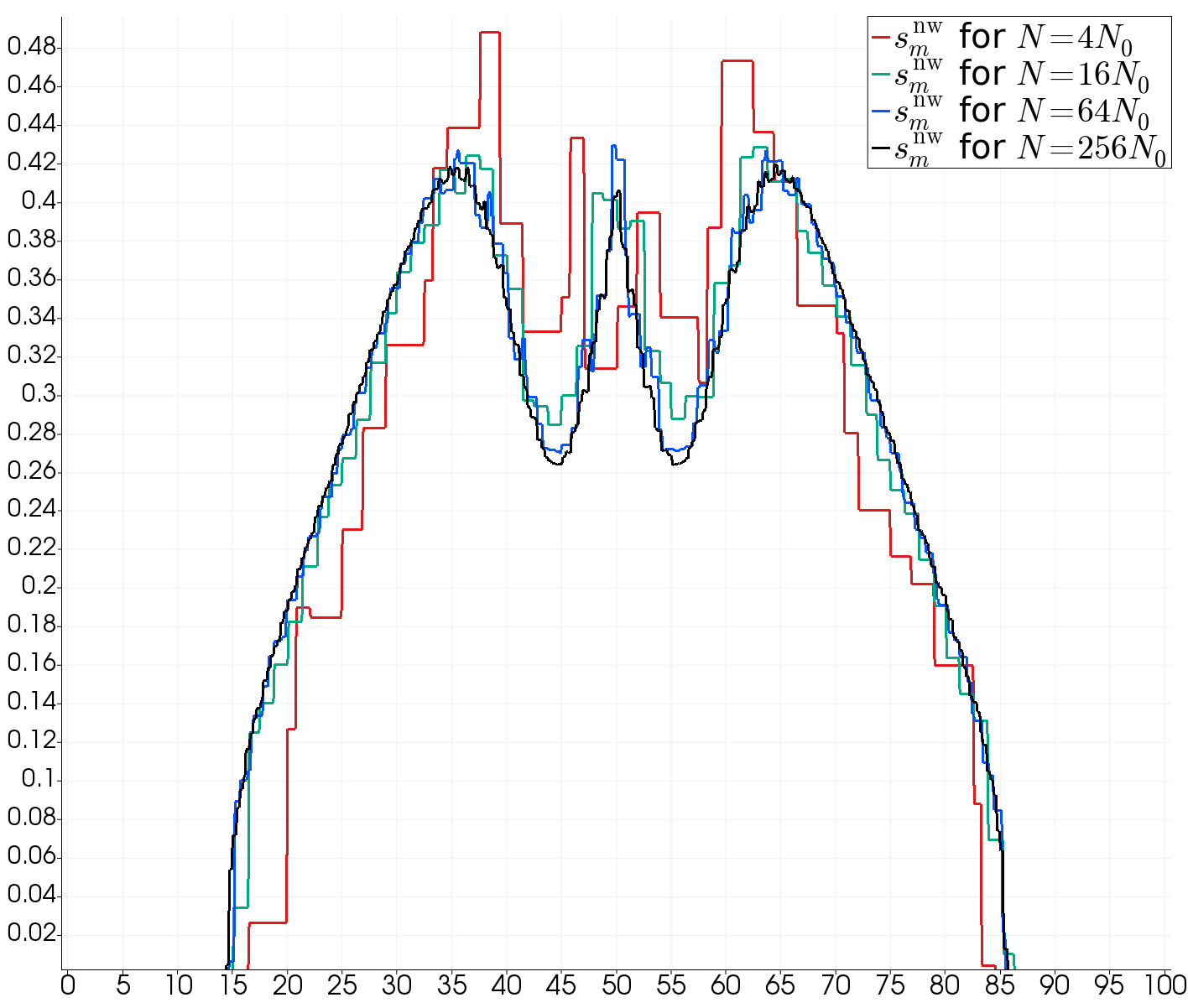}}
\caption{Convergence of the profiles of the displacement field components (m) $u_1$ (a), $u_2$ (b), and gas saturation $s_m^{\rm nw}$ (c) at the final time, along the line $y=55$\,m intersecting the vertical fracture, for four grids, with $N$ triangular elements, and $N_0=224$.}
\label{conv_u_sg}
\end{figure}

\begin{figure}
\centering
\subfloat[]{\includegraphics[scale=.55]{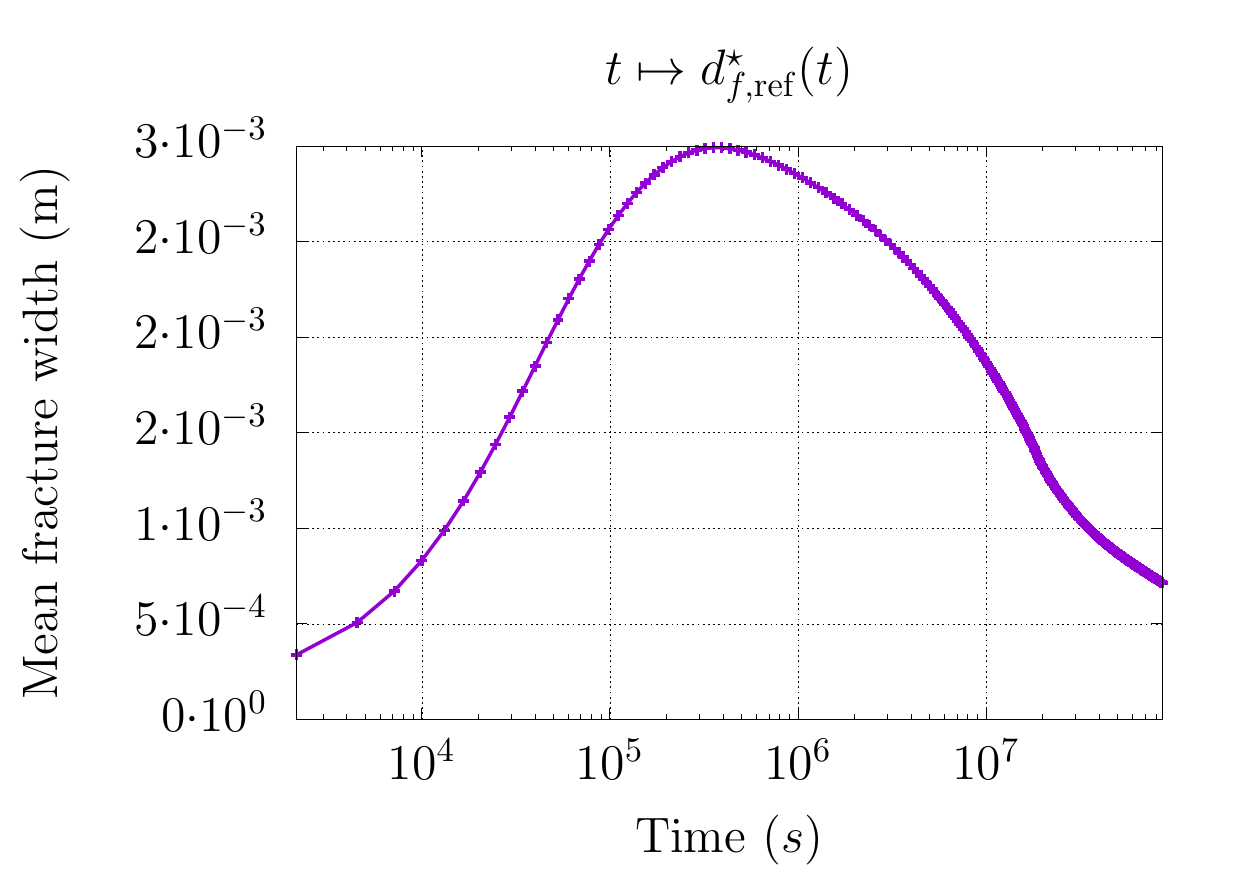}}\
\subfloat[]{\includegraphics[scale=.55]{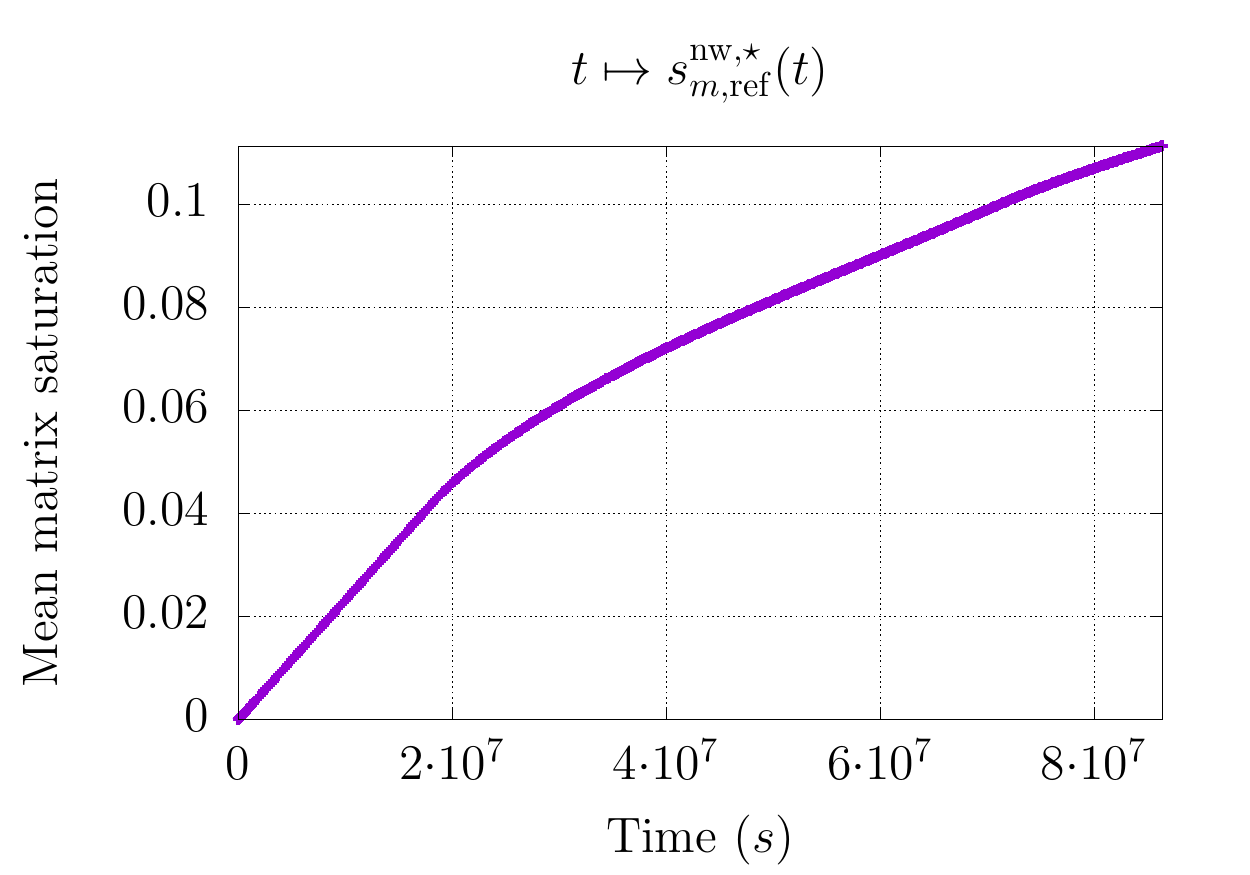}}\
\subfloat[]{\includegraphics[scale=.55]{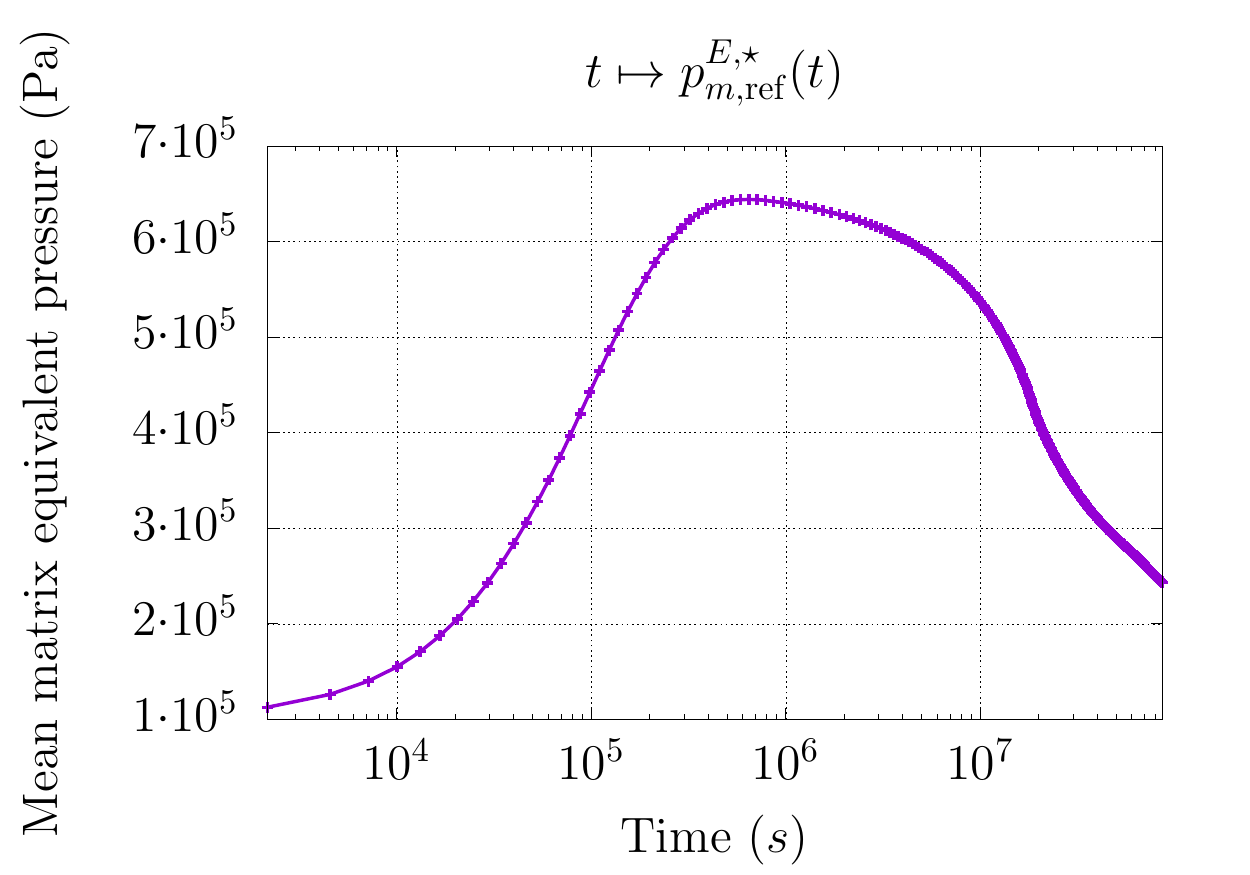}}
\caption{Time histories of the average of some physical quantities based on the reference solution ($a^\star$ denotes the spatial average of $a$).}
\label{ref_sols}
\end{figure}
\begin{figure}
\hspace*{-.5cm}
\centering
\includegraphics[scale=.75]{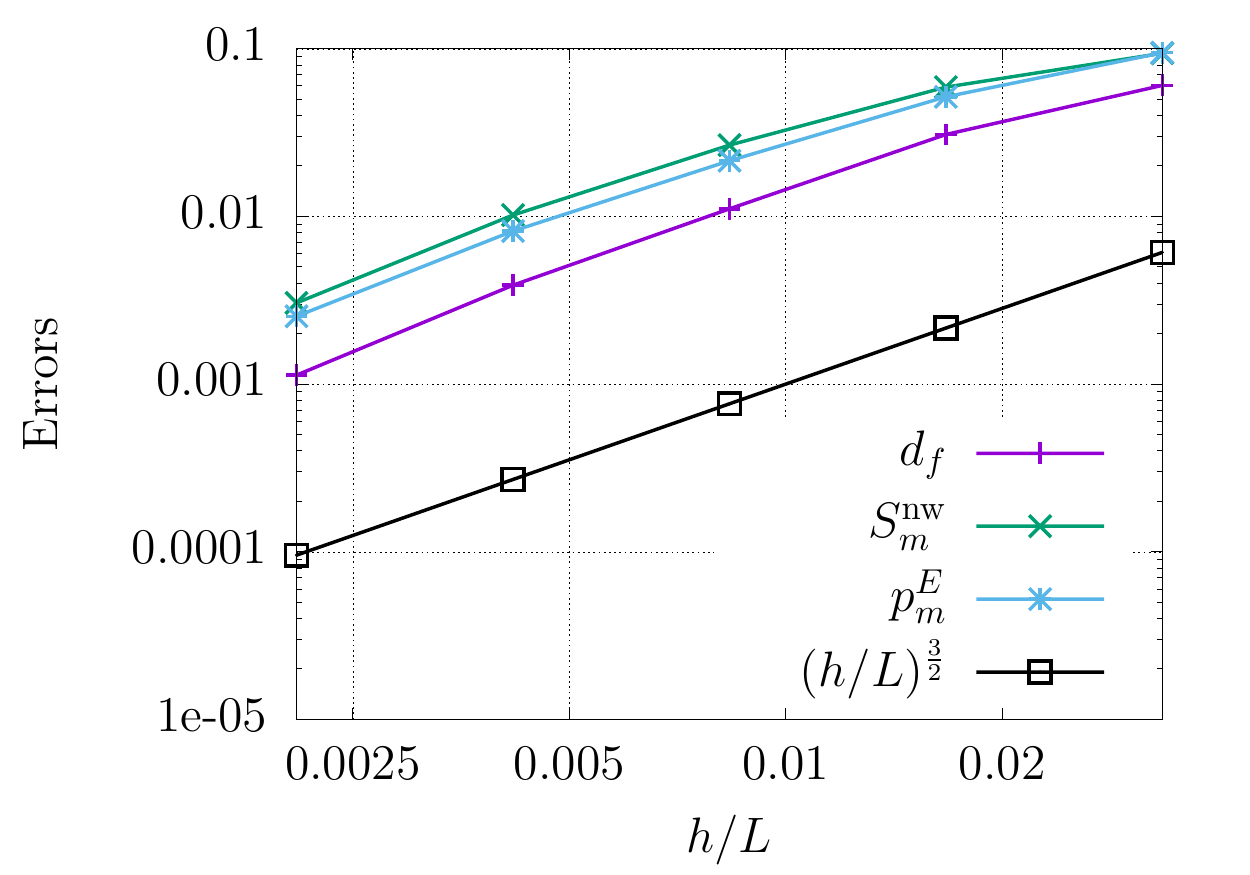}
\caption{Relative $L^2$ norm of the error as a function of the mesh step computed on the first five meshes for the time histories of the mean quantities $d^\star_f$, $s^{\g,\star}_m$ and $p^{E,\star}_m$ with respect to the corresponding reference time histories (cf.~Figure~\ref{ref_sols}).}

\label{relative_errors}
\end{figure}

\subsection{Comparison between the upwind and centered schemes}

In this subsection, the solutions obtained by the upwind approximation of the mobilities \cite{gem.aghili} are compared to the solutions obtained in the previous subsection based on the centered approximation.
  Small differences can be noticed on the upper part of the matrix saturation at final time in Figures \ref{fig_upwind_centered_schemes_mesh6} and \ref{fig_upwind_centered_schemes_mesh7}  due to the larger numerical diffusion of the upwind scheme. They clearly reduce with the mesh refinement as can be observed from the comparison between the line cut plots in these figures. The cuts at $y=80$ m in the right Figure \ref{fig_upwind_centered_cut80_mesh67} for the fifth and sixth meshes confirm that, compared to the upwind scheme, the centered scheme converges more quickly with the mesh refinement. This is also checked on the left Figure \ref{fig_upwind_centered_cut80_mesh67} which exhibits the convergence of the error for the average quantities $d^\star_f$, $s^{\g,\star}_m$ and $p^{E,\star}_m$ with respect to the reference mesh solutions. It shows that the upwind scheme is slightly more accurate on the two coarsest meshes but that the centered scheme has a better convergence rate.  It has been checked that the displacement field and fracture aperture for both schemes exhibit very little differences on the finest meshes.
  Table \ref{perfs_upwind} shows the numerical performances of the upwind scheme. Comparing with the results obtained for the centered scheme (Table \ref{perfs}), the upwind scheme offers a slighly more efficient nonlinear convergence. It was also checked that the upwind scheme can accommodate larger time steps with successful nonlinear convergence than the centered scheme.

We note that a GDM discretisation of \eqref{eq_edp_hydromeca}--\eqref{closure_laws} with upwind approximation of the mobilities as in \cite{gem.aghili} for the TPFA scheme requires a scheme-dependent definition of the Darcy fluxes, and hence loses the generality of the GDM. Once such definition is provided, the convergence analysis would entail a specific treatment which needs to be investigated due to the nonlinearity introduced by the upwinding; however, this analysis could benefit from the tools developed here, such as the relative compactness results.

\begin{figure}
\centering
\includegraphics[scale=.5]{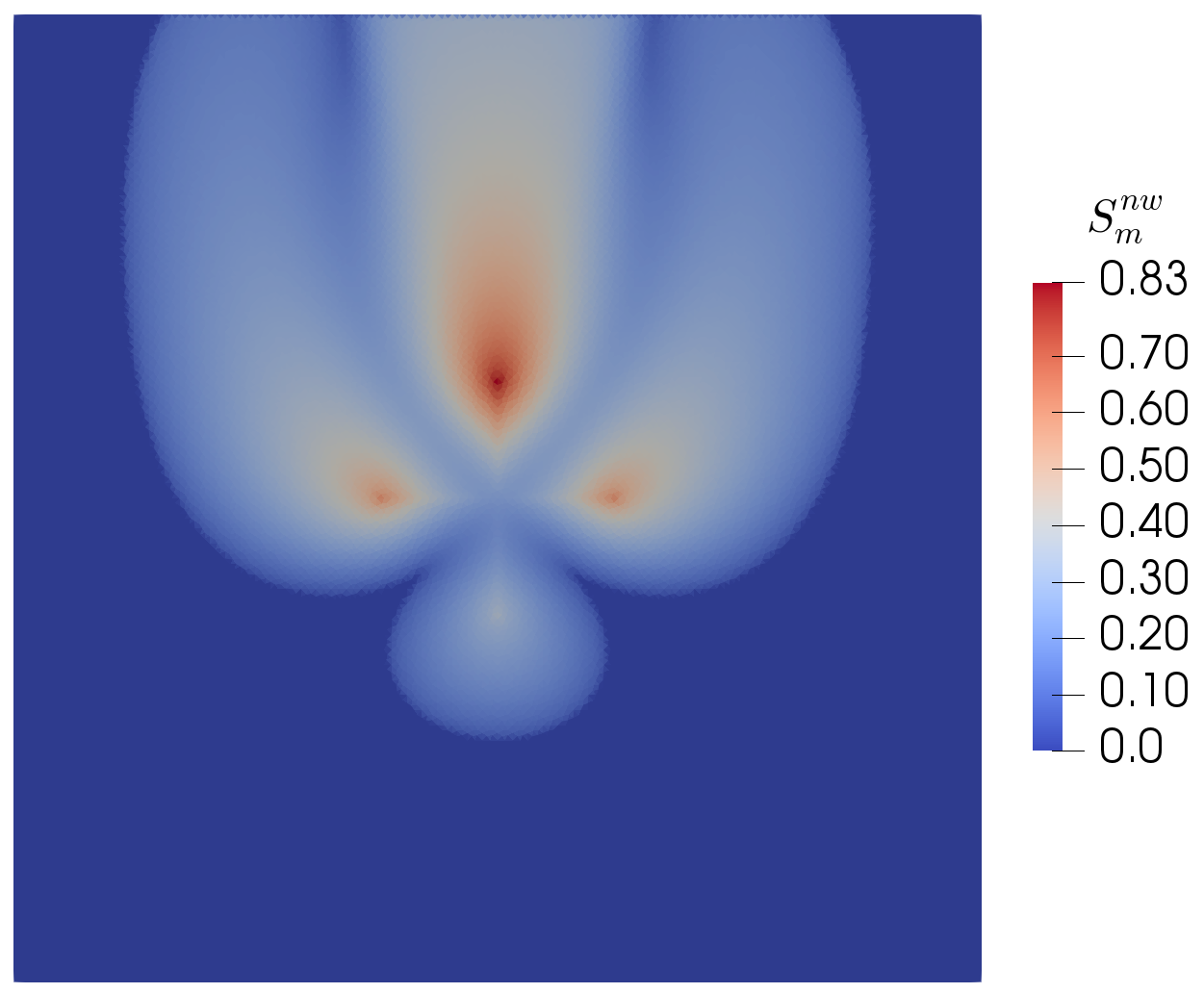}          
\includegraphics[scale=.5]{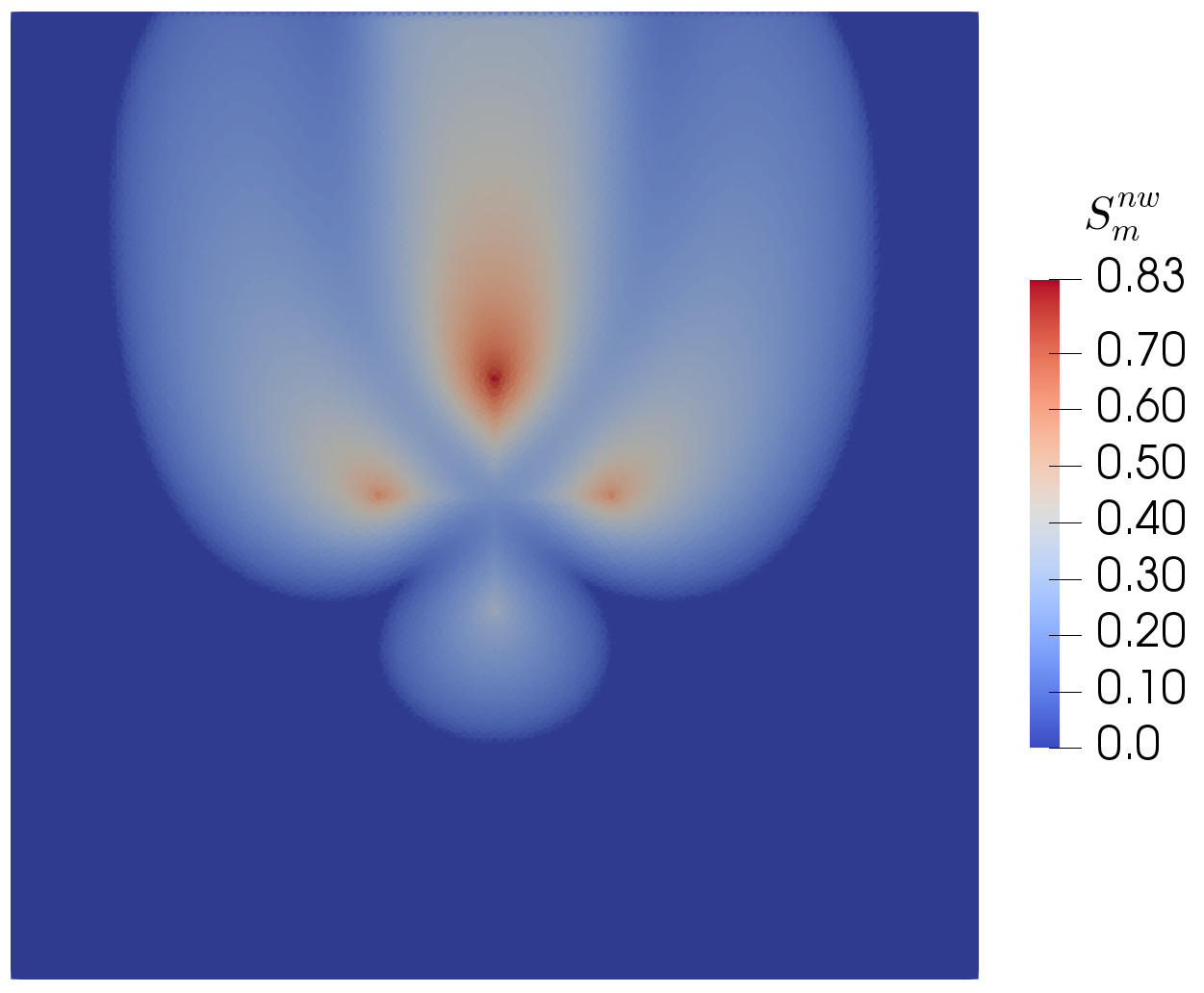}\\
\includegraphics[scale=0.7]{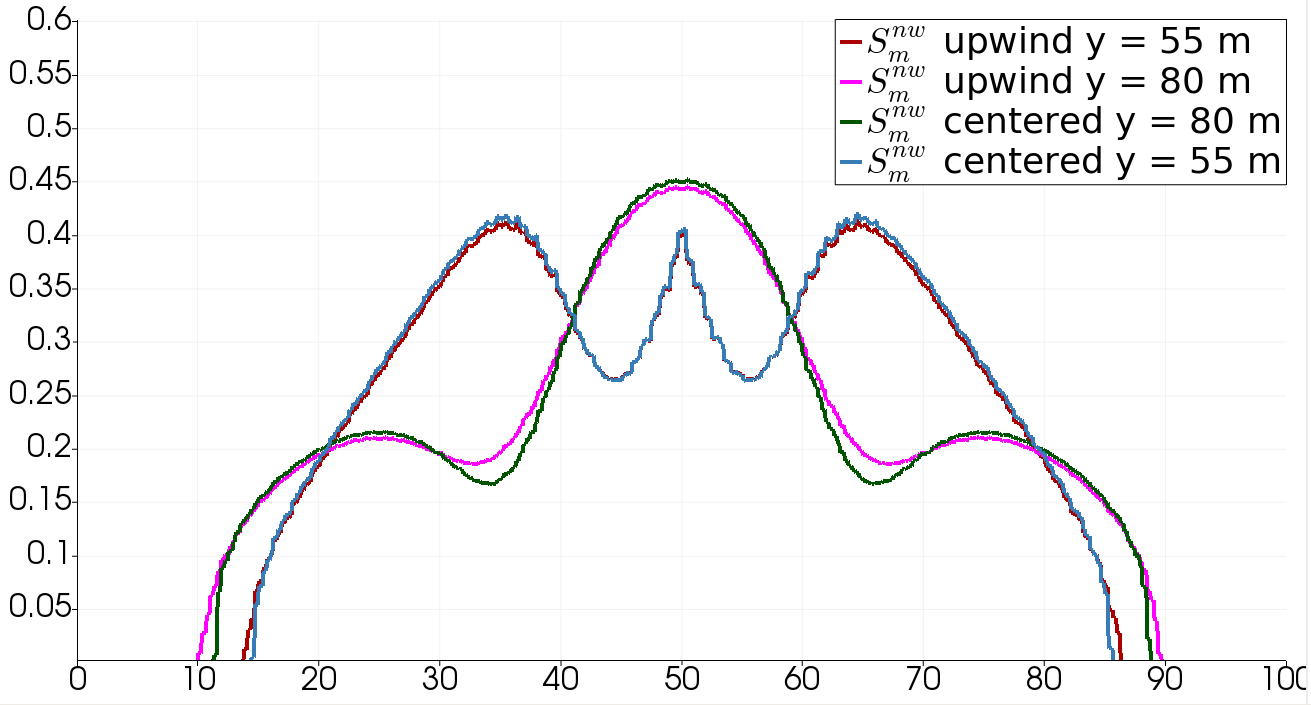}
\caption{On the fifth mesh with 256$N_0$ cells, final non-wetting phase matrix saturations for the centered scheme (top left) and upwind scheme (top right), and line cuts of both solutions at $y=55$ m and $y=80$ m (bottom).}
\label{fig_upwind_centered_schemes_mesh6}
\end{figure}

\begin{figure}
\centering
\includegraphics[scale=.5]{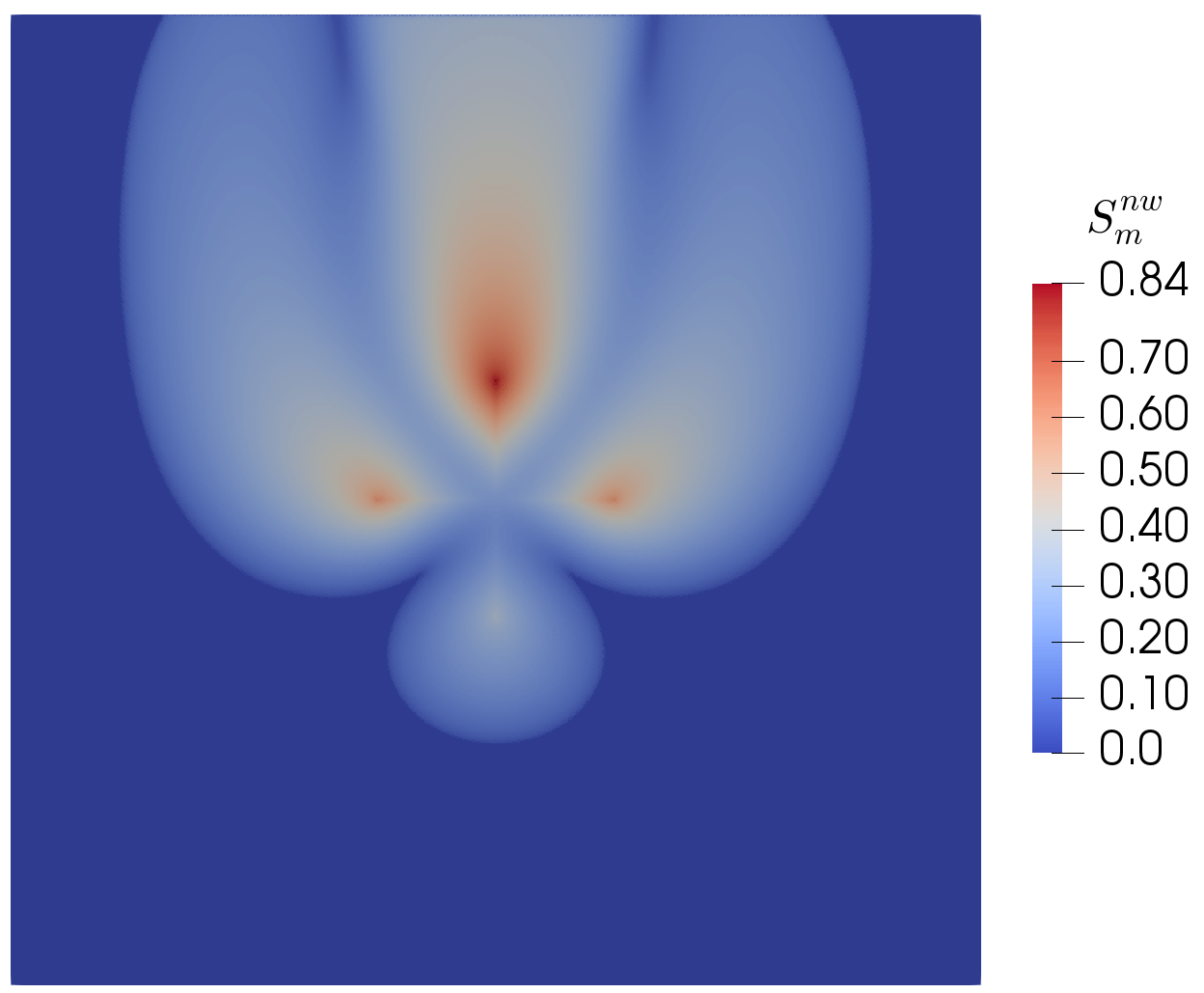}          
\includegraphics[scale=.5]{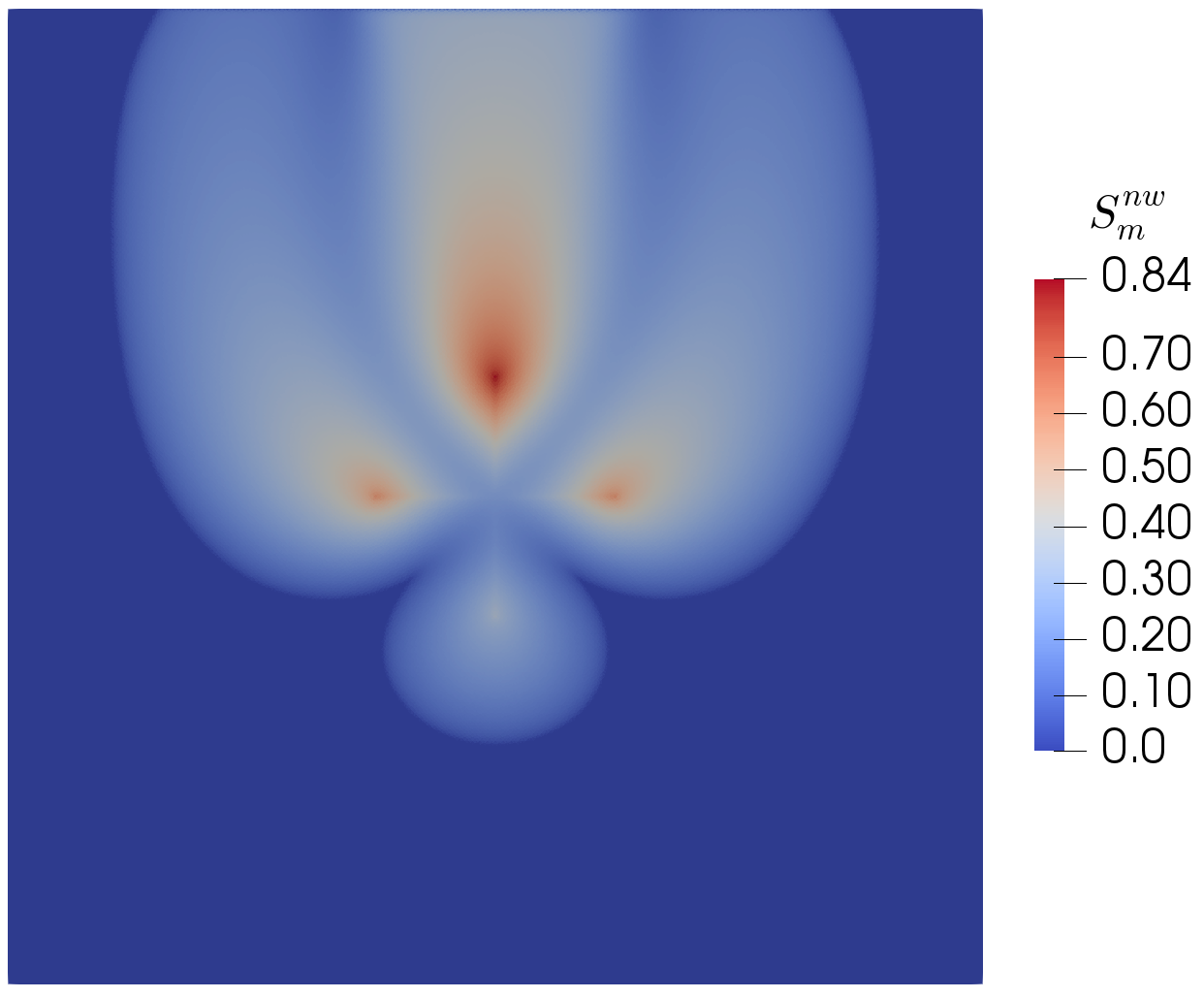}\\
\includegraphics[scale=0.7]{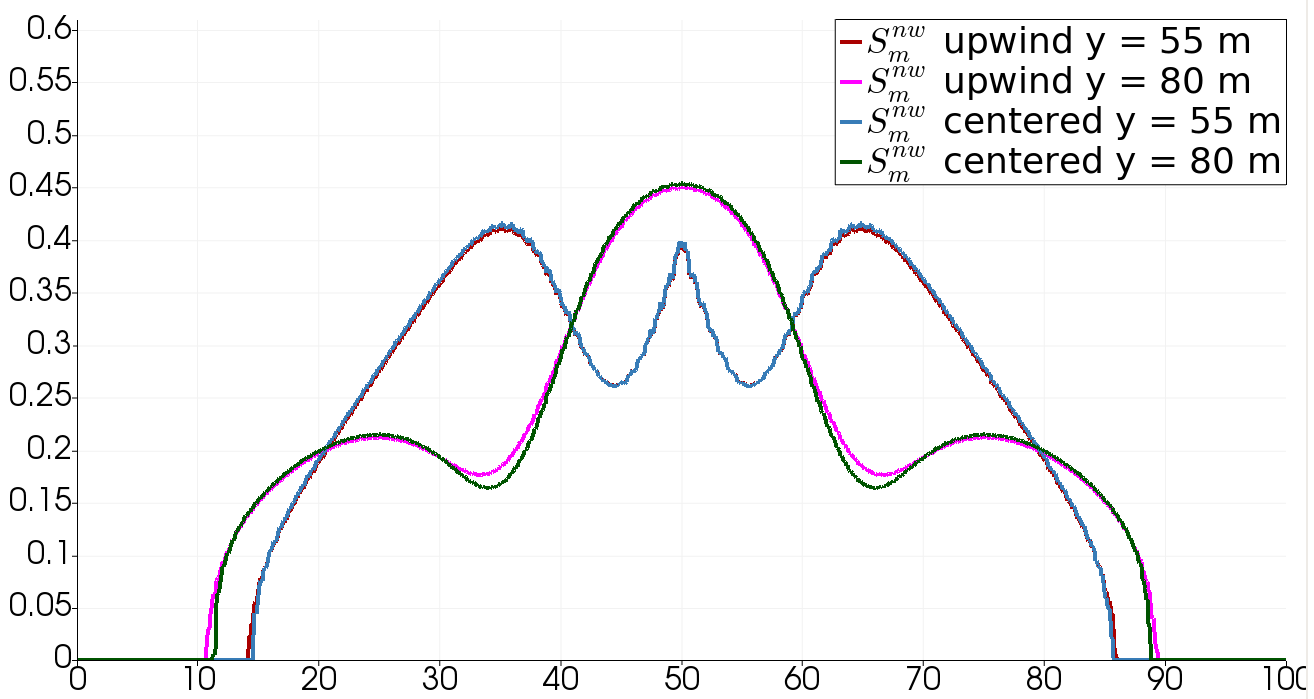}
\caption{On the finest mesh with 1024$N_0$ cells, final non-wetting phase matrix saturations for the centered scheme (top left) and upwind scheme (top right), and line cuts of both solutions at $y=55$ m and $y=80$ m (bottom).}
\label{fig_upwind_centered_schemes_mesh7}
\end{figure}

\begin{figure}
\centering
\includegraphics[scale=.15]{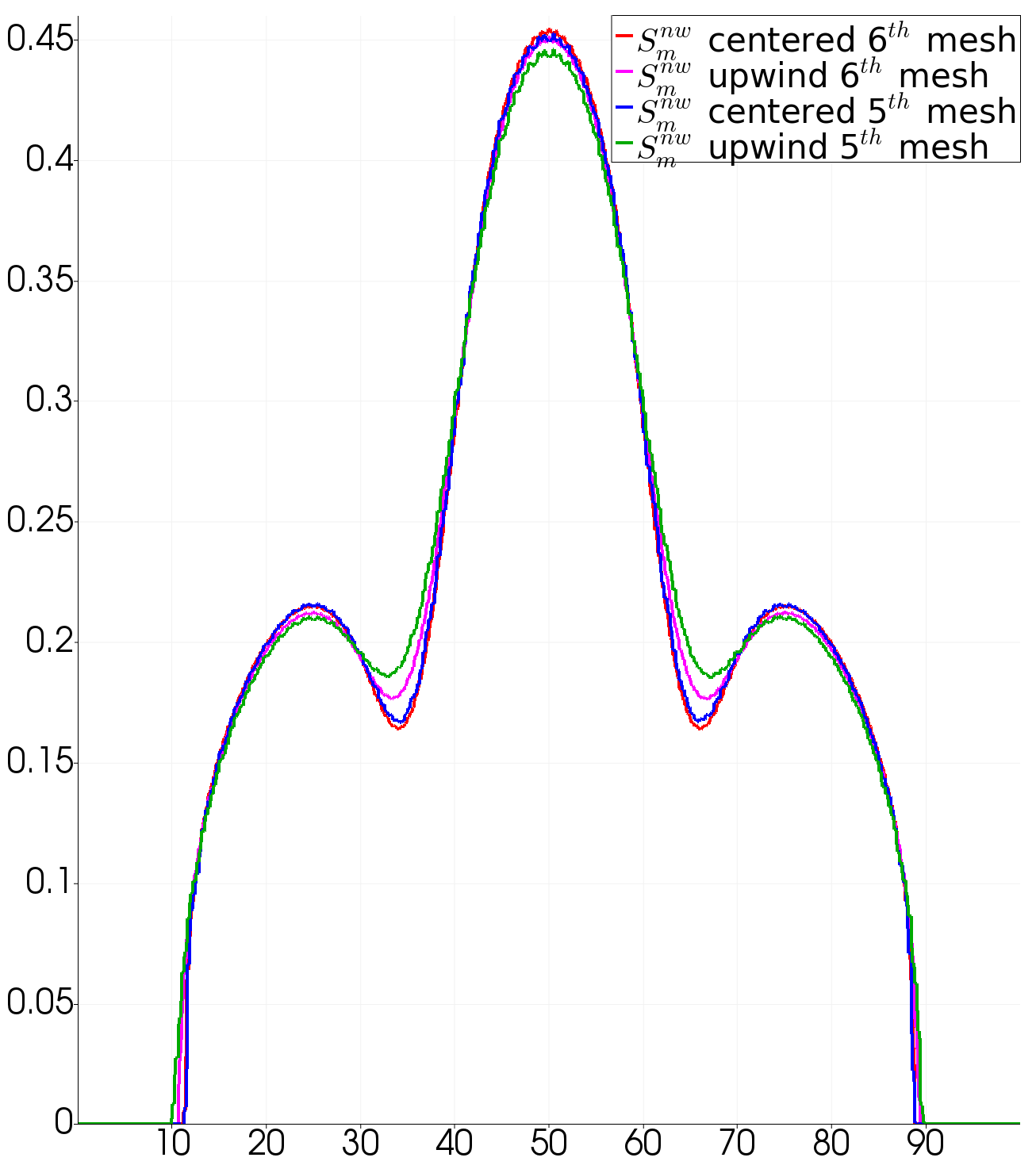}\\
\includegraphics[scale=.55]{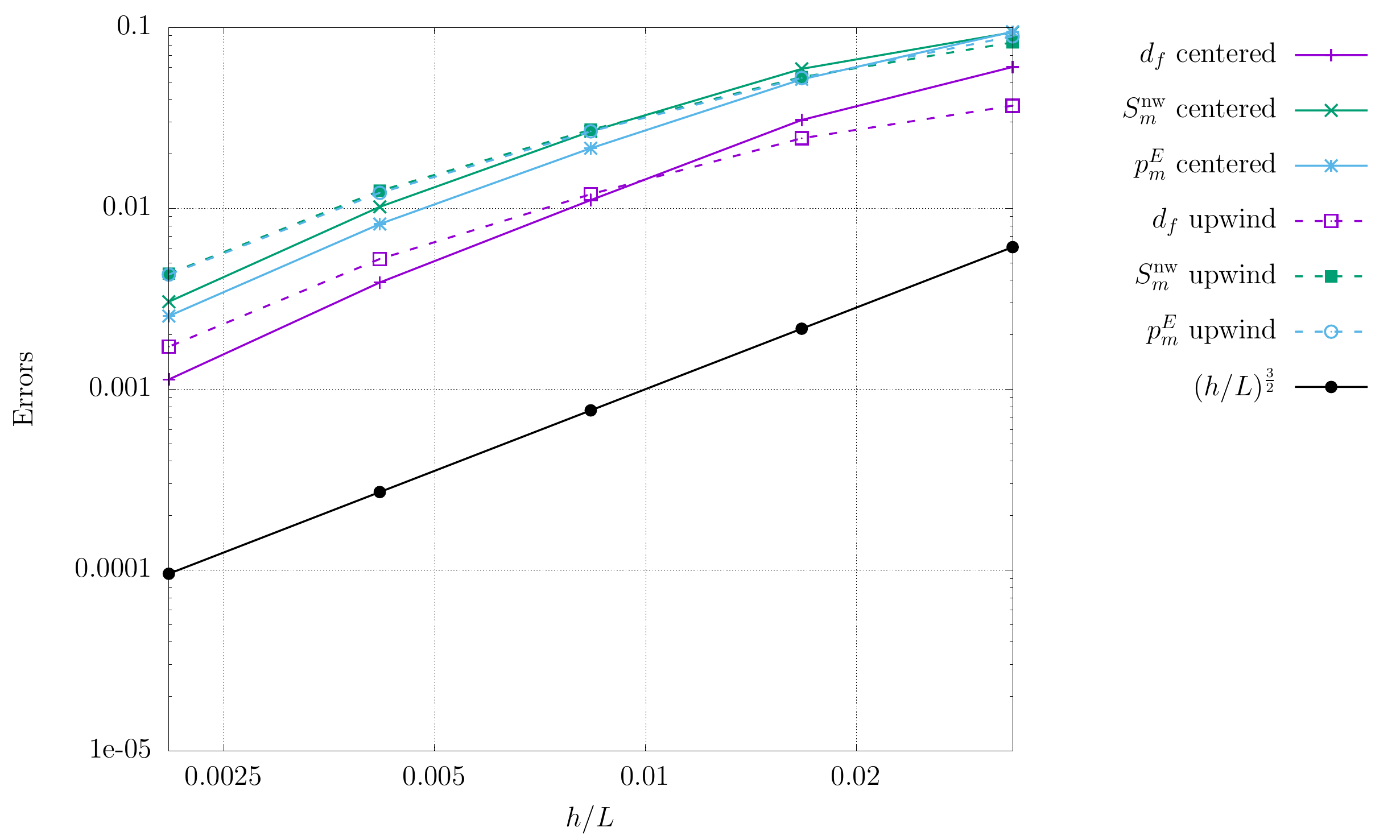}
\caption{(Top): on the fifth (256$N_0$ cells) and sixth (1024 $N_0$ cells) meshes, cuts at $y=80$ m of the non-wetting phase matrix saturations for both the centered  and upwind schemes.
(Bottom): for both the centered and upwind schemes, 
relative $L^2$ norm of the error as a function of the mesh step computed on the first five meshes for the time histories of the mean quantities $d^\star_f$, $s^{\g,\star}_m$ and $p^{E,\star}_m$ with respect to the corresponding reference time histories. 
}
\label{fig_upwind_centered_cut80_mesh67}
\end{figure}

\begin{table}
\centering
{ 
  {
    \begin{tabular}{|c|c|c|c|c|c|}
      \hline
      NbCells & {N}$_{\Delta t}$  &  N$_{\text{Newton}}$  & N$_{\text{GMRes}}$ & N$_{\text{FixedPoint}}$ & CPU (s)\\ \hline
     256$N_0$ & 246   & 4809  & 82085 & 4054 & 2250\\ \hline
     1024$N_0$ & 537   & 5486  & 114763  & 4136 & 13600\\ \hline
    \end{tabular}
    }
  }
    \caption{Performance of the method with the upwind scheme in terms of the number of mesh elements, the number of successful time steps, the total number of Newton-Raphson iterations, the total number of GMRes iterations, the total number of fixed-point iterations, and the CPU time.}
      \label{perfs_upwind}
\end{table}

\section{Conclusions}

We developed, in the framework of the gradient discretization method, the numerical analysis  of a two-phase flow model in deformable and fractured porous media. The model considers a linear elastic mechanical model with open fractures coupled with an hybrid-dimensional two-phase Darcy flow assuming continuity of each phase pressure across the fractures. The model accounts for a general network of planar fractures including immersed, non-immersed fractures and fracture intersections, and considers different rock types in the matrix and fracture network domains. 

It is assumed, for the convergence analysis, that the porosity remains bounded below by a strictly positive constant, and that the fracture aperture remains larger than a fixed non-negative continuous function vanishing only at the tips of the fracture network. These assumptions stem from the limitations of the continuous model itself. In addition, the mobility functions are assumed to be bounded below by strictly positive constants. However, unlike previous works, the fracture conductivity $d_f^3/12$ was not frozen and the complete non-linear coupling between the flow and mechanics equations was considered.

Assuming that the gradient discretization meet generic coercivity, consistency, limit-conformity and compactness properties, we proved the weak convergence of the phase pressures and displacement field to a weak continuous solution, as well as the strong convergence of the fracture aperture and of the matrix and fracture saturations.
Numerical experiments carried out for a cross-shaped fracture network immersed in a two-dimensional porous medium and using a TPFA finite volume scheme for the flow combined with a $\P_2$ finite element method for the mechanics, confirmed the numerical convergence of the scheme. 

\appendix
\section{Appendix}

\subsection{Appendix 1}
\label{appendix.a1}

\begin{proposition}\label{prop_grid_projection}
  Let $X\subset \mathbb{R}^d$ be bounded, $\delta > 0$ and let $\left( A^\delta_m \right)_{m\in M_\delta}$ be a covering of $X$ in disjoint cubes of length $\delta$.
 Let $R^\delta: L^2(\R^d) \to L^2(X)$ be such that, for any $v\in L^2(\R^d)$,
$$
\left( R^\delta v\right)|_{A^\delta_m\cap X} = \frac{1}{\delta^d} \int_{A^\delta_m} v(\x) ~\d\x\quad\forall m\in M_\delta,
$$
Then, we have 
$$
\|R^\delta v - v\|_{L^2(X)} \leq 2^{d/2} \sup_{|z|\leq \delta}\|v(\cdot+z) - v\|_{L^2(X)}. 
$$
\end{proposition}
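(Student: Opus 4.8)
The plan is to reduce the estimate to a uniform control of the $L^2(X)$-translates of $v$, via Jensen's inequality, a change of variables, and Fubini's theorem. First I would localise on each covering cube: since $R^\delta v$ is constant on $A^\delta_m\cap X$ and equals the mean of $v$ over the whole cube $A^\delta_m$, for $\mathbf{x}\in A^\delta_m\cap X$ one may write $R^\delta v(\mathbf{x})-v(\mathbf{x})=\frac{1}{\delta^d}\int_{A^\delta_m}(v(\mathbf{y})-v(\mathbf{x}))\,\d\mathbf{y}$. Applying Jensen's inequality (equivalently Cauchy--Schwarz) with respect to the probability measure $\delta^{-d}\,\d\mathbf{y}$ on $A^\delta_m$ gives the pointwise bound $|R^\delta v(\mathbf{x})-v(\mathbf{x})|^2\le \frac{1}{\delta^d}\int_{A^\delta_m}|v(\mathbf{y})-v(\mathbf{x})|^2\,\d\mathbf{y}$. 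Integrating over $\mathbf{x}\in A^\delta_m\cap X$ and summing over $m\in M_\delta$ (the pieces $A^\delta_m\cap X$ form a disjoint cover of $X$) then yields $\|R^\delta v-v\|_{L^2(X)}^2\le \sum_{m\in M_\delta}\frac{1}{\delta^d}\int_{A^\delta_m\cap X}\int_{A^\delta_m}|v(\mathbf{y})-v(\mathbf{x})|^2\,\d\mathbf{y}\,\d\mathbf{x}$.

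Next I would perform the change of variables $\mathbf{y}=\mathbf{x}+\mathbf{z}$ in the inner integral. Since $\mathbf{x}$ and $\mathbf{y}$ lie in a common cube of side $\delta$, the displacement satisfies $\mathbf{z}\in(-\delta,\delta)^d$, i.e.\ $|\mathbf{z}|\le\delta$, and the inner integral becomes $\int_{(-\delta,\delta)^d}\mathbf{1}_{\{\mathbf{x}+\mathbf{z}\in A^\delta_m\}}|v(\mathbf{x}+\mathbf{z})-v(\mathbf{x})|^2\,\d\mathbf{z}$. Using Fubini's theorem to bring the $\mathbf{z}$-integration outside the $m$-sum, I would observe that, for each fixed $\mathbf{z}$, the sets $\{\mathbf{x}\in A^\delta_m\cap X:\ \mathbf{x}+\mathbf{z}\in A^\delta_m\}$ are pairwise disjoint as $m$ varies (because the cubes $A^\delta_m$ are disjoint and force $\mathbf{x}\in A^\delta_m$) and are all contained in $X$. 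Consequently the sum over $m$ of the corresponding integrals is bounded by $\int_X|v(\mathbf{x}+\mathbf{z})-v(\mathbf{x})|^2\,\d\mathbf{x}=\|v(\cdot+\mathbf{z})-v\|_{L^2(X)}^2$, with no over-counting.

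Finally I would bound each translate norm by the supremum in the statement and integrate the resulting constant over $(-\delta,\delta)^d$, whose volume is $(2\delta)^d$, obtaining $\|R^\delta v-v\|_{L^2(X)}^2\le \frac{(2\delta)^d}{\delta^d}\sup_{|z|\le\delta}\|v(\cdot+z)-v\|_{L^2(X)}^2=2^d\sup_{|z|\le\delta}\|v(\cdot+z)-v\|_{L^2(X)}^2$, and taking square roots produces the claimed factor $2^{d/2}$. I expect the delicate step to be the disjointness/Fubini argument: one must verify that collapsing the cube-wise double integrals into a single translate integral over $X$ incurs no multiplicative loss, which is precisely what the disjointness of the covering cubes guarantees, and that the boundary cubes only partially met by $X$ pose no problem since every $\mathbf{x}$-integration is restricted to $A^\delta_m\cap X\subseteq X$.
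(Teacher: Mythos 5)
Your proof is correct. A point-by-point comparison is somewhat moot here, because the paper does not actually prove this proposition: its ``proof'' consists of a citation to \cite[p.~756]{droniou.eymard2016}, plus the remark that the assumption made in that reference --- that $v$ vanishes outside $X$ --- is not needed. Your argument is essentially a self-contained reconstruction of that cited proof: Jensen's inequality against the probability measure $\delta^{-d}\,\d\mathbf{y}$ on each cube, the change of variables $\mathbf{y}=\mathbf{x}+\mathbf{z}$, then Fubini together with the disjointness of the cubes to collapse the cube-wise double integrals into a single translate integral over $X$ with no over-counting. It also substantiates the paper's remark: you integrate $v$ over the full cubes $A^\delta_m$ (which may protrude outside $X$) and over translates restricted to $X$, but you only ever need $v\in L^2(\R^d)$, never that $v$ vanish outside $X$. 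One small point you should make explicit: the change of variables yields $\mathbf{z}\in(-\delta,\delta)^d$, so bounding each translate norm by $\sup_{|z|\le\delta}$ and integrating over a set of volume $(2\delta)^d$ is exactly right when $|\cdot|$ denotes the supremum norm; this is indeed the convention of this appendix (Lemma~\ref{lemme2} explicitly takes distances in the supremum norm), but with the Euclidean norm the supremum would have to run over $|z|\le\sqrt{d}\,\delta$, which changes nothing essential in the argument but does change the constant's labelling.
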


\begin{proof}
The proof can be found in \cite[p. 756]{droniou.eymard2016}. Note that the assumption, in this reference, that $v$ is zero outside $X$ is actually not useful.
\end{proof}

\begin{lemma}\label{lemme2}
Let $X\subset \mathbb{R}^d$ be bounded, and $U$ be an open subset of $\R^d$ such that $\{\x\in\R^d\,:\,{\rm dist}(\x,X)<\delta_0\} \subset U$ for a given $\delta_0 >0$, where the distance is considered for the supremum norm in $\R^d$.  Let $\left( w_k \right)_{k\in \N}$ be a bounded sequence in $L^\infty(0,T; L^2(U))$ that converges uniformly in time and weakly in $L^2(U)$ to $w \in L^\infty(0,T; L^2( U ))$. Let $p\in [1, +\infty]$ and let us define 
$$
T(\delta) = \sup_k \Big\| \sup_{|z|\leq \delta} \| w_k(\cdot,\cdot+z) - w_k(\cdot,\cdot) \|_{L^2(X)}  \Big\|_{L^p(0,T)}.
$$
If $\lim_{\delta\to 0}T(\delta) = 0$, then the sequence $\left( w_k \right)_{k\in \N}$ converges to $w$ in $L^p(0,T;L^2(X))$.
\end{lemma}

\begin{proof}
 For $0 < \delta < \delta_0$, let $\left( A^\delta_m \right)_{m\in M_\delta}$ be a covering of $X$ in disjoint cubes of length $\delta$ and let $R^\delta$ be the corresponding $L^2$ projection operator as defined in Proposition \ref{prop_grid_projection}. We write
$$
w_k - w = (w_k - R^\delta w_k)  + (R^\delta w_k - R^\delta w) + (R^\delta w - w)
$$
and we establish the convergence to $0$ of each bracketed term in the right-hand side. First, in view of Proposition \ref{prop_grid_projection}
$$
\| w_k(t,\cdot) - R^\delta w_k(t,\cdot) \|_{L^2(X)} \lsim  \sup_{|z|\leq \delta}\|w_k(t,\cdot+z) - w_k(t,\cdot)\|_{L^2(X)}
$$
implying that
$$
\| w_k - R^\delta w_k \|_{L^p(0,T;L^2(X) )} \lsim T(\delta).
$$
Setting $v_k = w_k - R^\delta w_k$, $k\in \N$, we have, if $p = \infty$, $\| v_k(t,\cdot) \|_{L^2(X)} \lsim T(\delta)$ for a.e.\ $t\in (0,T)$. 
Since ${\rm Id}-R^\delta:L^2(X)\to L^2(X)$ is linear, the weak convergence of $w_k(t,\cdot)$ implies that $v_k(t,\cdot)\weakto v(t,\cdot) \coloneq w(t,\cdot)-R^\delta w(t,\cdot)$ weakly in $L^2(X)$, and thus that
$$
\|w(t,\cdot)-R^\delta w(t,\cdot)\|_{L^2(X)} \leq \liminf_{k\to +\infty} \| v_k(t,\cdot) \|_{L^2(X)} \lsim T(\delta). 
$$
For $p<\infty$, we have, using the above weak convergence of $(v_k(t,\cdot))_{k\in\N}$ and Fatou's lemma,
$$
\int_0^T \|v(t,\cdot)\|^p_{L^2(X)} \d t\leq \int^T_0 \liminf_{k\to +\infty} \|v_k(t,\cdot)\|^p_{L^2(X)} \d t\leq \liminf_{k\to +\infty} \int^T_0 \|v_k(t,\cdot)\|^p_{L^2(X)} \d t \lsim T^p(\delta). 
$$
Hence, for any $p$,
$$
\| w - R^\delta w \|_{L^p(0,T;L^2(X) )} \lsim T(\delta).
$$
Finally, 
$$
R^\delta w_k - R^\delta w = \sum_{m\in M_\delta} a^\delta_{km}(t) \mathbbm{1}_{A^\delta_m\cap X}, \quad \mbox{with} \quad 
 a^\delta_{km}(t)= {1\over \delta^d} \int_{A^\delta_m} (w_k(t,\x) - w(t,\x))\d\x,
$$
Since the covering $\left( A^\delta_m \right)_{m\in M_\delta}$ is finite and since, for all $m\in M_\delta$, the term $a^\delta_{km}(t)$ converges uniformly in time to zero, it results that $(R^\delta w_k - R^\delta w)$ converges as $k\to +\infty$ to zero in $L^{\infty}((0,T)\times X)$.

Gathering the estimates, we have that 
$$
\| w_k - w  \|_{L^p(0,T; L^2(X))} \lsim 2 T( \delta ) + \| R^\delta w_k - R^\delta w  \|_{L^p(0,T; L^2(X))}.  
$$
Passing to the superior limit as $k \to +\infty$, we deduce that $\limsup_{k\to +\infty} \| w_k - w \|_{L^p(0,T; L^2(X))} \lsim 2 T(\delta)$
which yields, letting $\delta\to 0$, $\limsup_{k\to +\infty} \| w_k - w \|_{L^p(0,T; L^2(X))} = 0$.
\end{proof}

\subsection{Appendix 2}
\label{appendix.a2}

\begin{lemma}\label{lemma_limitconformityDu}
 Let $(\D_\bu^l)_{l\in{\mathbb N}}$ be a sequence of GDs assumed to satisfy the coercivity and limit-conformity properties.
 Let $(\bu^l)_{l\in \N}$ be a sequence of vectors with $\bu^l \in X^0_{\D^l_\bu}$ such that there exist $C$ independent of $l\in \N$ with $\|\bu^l\|_{\D_\bu} \leq C$.   Then, there exists $\bar\bu\in \U_0$ such that, up to a subsequence, the following weak limits hold:  
\begin{equation*}
\begin{array}{lll}  
  & \Pi_{\D^l_\bu} \bu^l \weakto \bar \bu & \mbox{in } L^2(\Omega)^d,\\
  & \bbeps_{\D^l_\bu} (\bu^l) \weakto \bbeps(\bar\bu) & \mbox{in } L^2(\Omega,\S_d(\R)),\\
  & \div_{\D^l_\bu} (\bu^l) \weakto \div(\bar\bu) & \mbox{in } L^2(\Omega),\\
  & \jump{\bu^l}_{\D^l_\bu} \weakto \jump{\bar\bu} & \mbox{in } L^2(\Gamma). 
\end{array}
\end{equation*} 
\end{lemma}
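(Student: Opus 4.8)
The plan is to combine the uniform bound coming from coercivity with the weak sequential compactness of bounded sets in $L^2$, and then to identify every weak limit through a single integration-by-parts identity furnished by the limit-conformity property.

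First I would exploit the coercivity of $(\D_\bu^l)_{l\in\N}$. Since $\|\bu^l\|_{\D_\bu}=\|\bbeps_{\D_\bu^l}(\bu^l)\|_{L^2(\Omega,\S_d(\R))}\le C$, the definition \eqref{def_CDmeca} of $C_{\D_\bu}$ bounds $\Pi_{\D_\bu^l}\bu^l$ in $L^2(\Omega)^d$ and $\jump{\bu^l}_{\D_\bu^l}$ in $L^4(\Gamma)$, while $\div_{\D_\bu^l}(\bu^l)=\mathrm{Trace}(\bbeps_{\D_\bu^l}(\bu^l))$ is bounded in $L^2(\Omega)$. By reflexivity I extract a subsequence such that $\Pi_{\D_\bu^l}\bu^l\weakto\bar\bu$ in $L^2(\Omega)^d$, $\bbeps_{\D_\bu^l}(\bu^l)\weakto\mathbf E$ in $L^2(\Omega,\S_d(\R))$ and $\jump{\bu^l}_{\D_\bu^l}\weakto J$ in $L^4(\Gamma)$, hence also weakly in $L^2(\Gamma)$ as $\Gamma$ has finite measure. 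Because $\mathrm{Trace}$ is linear and continuous, $\div_{\D_\bu^l}(\bu^l)\weakto\mathrm{Trace}(\mathbf E)$ weakly in $L^2(\Omega)$; it therefore suffices to show $\bar\bu\in\U_0$, $\mathbf E=\bbeps(\bar\bu)$ and $J=\jump{\bar\bu}$, since $\mathrm{Trace}(\mathbf E)=\div(\bar\bu)$ will then follow.

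Next I would pass to the limit in the limit-conformity property. Fixing $\bbtau\in C_\Gamma^\infty(\Omega\setminus\overline\Gamma,\S_d(\R))$ and testing its definition against $\bv=\bu^l$, one has
$$
\left|\int_\Omega\bigl(\bbtau:\bbeps_{\D_\bu^l}(\bu^l)+\Pi_{\D_\bu^l}\bu^l\cdot\div(\bbtau)\bigr)\,\d\x-\int_\Gamma(\bbtau\n^+)\cdot\n^+\,\jump{\bu^l}_{\D_\bu^l}\,\d\sigma(\x)\right|\le \mathcal W_{\D_\bu^l}(\bbtau)\,\|\bu^l\|_{\D_\bu}\le C\,\mathcal W_{\D_\bu^l}(\bbtau)\xrightarrow[l\to\infty]{}0.
$$
Using the three weak convergences, I obtain the limit identity
$$
\int_\Omega\bigl(\bbtau:\mathbf E+\bar\bu\cdot\div(\bbtau)\bigr)\,\d\x=\int_\Gamma(\bbtau\n^+)\cdot\n^+\,J\,\d\sigma(\x)\qquad\forall\,\bbtau\in C_\Gamma^\infty(\Omega\setminus\overline\Gamma,\S_d(\R)).
$$
Restricting to $\bbtau\in C_c^\infty(\Omega\setminus\overline\Gamma,\S_d(\R))$, for which the fracture term vanishes, shows that $\mathbf E=\bbeps(\bar\bu)$ in the distributional sense on $\Omega\setminus\overline\Gamma$. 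Since $\bar\bu\in L^2$ and $\mathbf E\in L^2$, a Korn-type regularity result (the fact that $L^2$-integrability of both a field and its distributional symmetric gradient forces $H^1$-regularity on each Lipschitz connected component) yields $\bar\bu\in(H^1(\Omega\setminus\overline\Gamma))^d$ with $\bbeps(\bar\bu)=\mathbf E$, so that the traces of $\bar\bu$ on $\del\O$ and on both sides of $\Gamma$ are well defined.

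Finally, with $\bar\bu$ now piecewise $H^1$, I would integrate by parts in the limit identity. Green's formula on each component of $\Omega\setminus\overline\Gamma$ produces boundary contributions on $\del\O$ and on $\Gamma^\pm$; the constraints $\bbtau^+\n^++\bbtau^-\n^-=\mathbf 0$ and $(\bbtau^+\n^+)\times\n^+=\mathbf 0$ defining $C_\Gamma^\infty$ collapse the fracture part to exactly $\int_\Gamma(\bbtau\n^+)\cdot\n^+\,\jump{\bar\bu}\,\d\sigma(\x)$, leaving
$$
\int_{\del\O}(\bbtau\n)\cdot\bar\bu\,\d\sigma(\x)+\int_\Gamma(\bbtau\n^+)\cdot\n^+\,\jump{\bar\bu}\,\d\sigma(\x)=\int_\Gamma(\bbtau\n^+)\cdot\n^+\,J\,\d\sigma(\x).
$$
Choosing $\bbtau$ supported near $\del\O$ and away from $\Gamma$, and noting that $\bbtau\n$ probes all components of $\bar\bu|_{\del\O}$ (take $\bbtau=\n\otimes\n$ for the normal part and $\bbtau=\tfrac12(\mathbf{g}\otimes\n+\n\otimes\mathbf{g})$ for tangential parts), gives $\trace_{\del\O}\bar\bu=0$, i.e. $\bar\bu\in\U_0$. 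The boundary term then drops out, and since $(\bbtau\n^+)\cdot\n^+$ ranges over a dense subset of $L^2(\Gamma)$ (take $\bbtau=w\,\n^+\otimes\n^+$ with $w$ smooth and compactly supported away from the tips), I conclude $J=\jump{\bar\bu}$ a.e. on $\Gamma$. I expect the main obstacle to lie precisely in this last stage: upgrading $\bar\bu$ to $H^1$ from the mere $L^2$-bound on its symmetric gradient, and verifying that the heavily constrained test space $C_\Gamma^\infty$ is still rich enough for $\bbtau\n$ on $\del\O$ and $(\bbtau\n^+)\cdot\n^+$ on $\Gamma$ to separate all the relevant trace components.
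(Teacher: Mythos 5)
Your proposal follows the paper's proof essentially verbatim: coercivity yields the uniform $L^2(\Omega)$ and $L^4(\Gamma)$ bounds and the weak limits $\bar\bu$, $\mathbf E$, $J$; testing the limit-conformity defect with $\bv=\bu^l$ and passing to the limit gives the integration-by-parts identity; and choosing first $\bbtau$ compactly supported in $\Omega\setminus\overline\Gamma$, then generic $\bbtau\in C^\infty_\Gamma(\Omega\setminus\overline\Gamma,\S_d(\R))$, identifies $\mathbf E=\bbeps(\bar\bu)$, the vanishing trace on $\del\O$, and $J=\jump{\bar\bu}$ --- exactly the two-step selection the paper compresses into a single sentence, which you correctly expand (including the observation that the constraints on $\bbtau$ reduce the fracture term to $(\bbtau\n^+)\cdot\n^+\,\jump{\bar\bu}$ and that $\bbtau\n$ spans all trace components on $\del\O$). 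The one point to adjust is your parenthetical justification of the $H^1$-upgrade: connected components of $\Omega\setminus\overline\Gamma$ containing immersed fracture tips are slit domains, hence \emph{not} Lipschitz, so the Korn/Lions-type regularity must be obtained by covering each component with finitely many overlapping Lipschitz subdomains and patching the resulting $H^1$ regularity --- a detail that is equally implicit in the paper's own ``it results that'' step.
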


\begin{proof} 
By assumption the sequence $(\|\bbeps_{\D^l_\bu}\|_{L^2(\Omega,\S_d(\R))})_{l\in\N}$ is bounded which implies, 
from the coercivity property, that the sequences $(\|\Pi_{\D^l_\bu} \bu^l\|_{L^2(\Omega)})_{l\in\N}$ and $(\|\jump{\bu^l}_{\D^l_\bu}\|_{L^2(\Gamma)})_{l\in \N}$ are also bounded. Hence there exist $\bar\bu \in L^2(\Omega)^d$, $\bar\bbeps\in L^2(\Omega,\S_d(\R))$ and
$\bar g\in L^2(\Gamma)$ such that, up to a subsequence, one has 
\begin{equation*}
\begin{array}{lll}  
  & \Pi_{\D^l_\bu} \bu^l \weakto \bar \bu & \mbox{in } L^2(\Omega)^d,\\
  & \bbeps_{\D^l_\bu} (\bu^l) \weakto \bar\bbeps & \mbox{in } L^2(\Omega,\S_d(\R)),\\[1ex]
  & \jump{\bu^l}_{\D^l_\bu} \weakto \bar g & \mbox{in } L^2(\Gamma). 
\end{array}
\end{equation*} 
Passing to the limit in the definition of the limit-conformity yields, for any $\bbsig \in$ $C_\Gamma^\infty(\Omega\setminus\overline\Gamma,\S_{d}(\R))$, 
$$
\int_\Omega \(\bbsig: \bar \bbeps + \bar\bu \cdot \div(\bbsig)\)\d \x - \int_\Gamma (\bbsig \n^+)\cdot\n^+ \bar g ~\d\sigma(\x) = 0. 
$$
Selecting first $\bbsig$ with a compact support in $\Omega\backslash \Gamma$, and then a generic $\bbsig$, 
it results that $\bar\bu \in \U_0$ with $\bar\bbeps = \bbeps(\bar \bu)$ and $\bar g = \jump{\bar\bu}$. Since $\div_{\D^l_\bu} (\bu^l) = \mbox{\rm Trace}(\bbeps_{\D^l_\bu} (\bu^l))$, it also holds that  $\div_{\D^l_\bu} (\bu^l) \weakto \div(\bar\bu)$  in $L^2(\Omega)$.

\end{proof}

Let us fix $\bar p^\alpha \in V_0$, $\alpha\in \{\g,\l\}$, $\mathbf f \in L^2(\Omega)^d$, and  define
  $$
  \bar p^E_m = \sum_{\alpha\in \{\g,\l\}} \bar p^\alpha S^\alpha_m(\bar p_c) - U_m(\bar p_c) \quad \mbox{ and } \quad \bar p^E_f = \sum_{\alpha\in \{\g,\l\}}  \gamma \bar p^\alpha S^\alpha_f(\gamma \bar p_c) - U_f(\gamma\bar p_c). 
  $$
  with $\bar p_c = \bar p^\g - \bar p^\l$.
  We consider the solution $\bar \bu \in \U_0$ of the following variational formulation
  \begin{equation}
    \label{VF_mechanics}
  \int_\O \( \bbsig(\bar \bu): \bbeps(\bar \bv) - b ~\bar p_m^E \div(\bar \bv)\) \d\x + \int_\G \bar p_f^E ~\jump{\bar \bv}  \d\sigma(\x)  = \int_\Omega \mathbf{f}\cdot\bar\bv ~\d\x,\qquad\forall \bar\bv\in \U_0. 
  \end{equation}

  Let us take $p^\alpha\in \D_p$, $\alpha\in \{\g,\l\}$, $p_c = p^\g - p^\l$ and
  $$
  p^E_m = \sum_{\alpha\in \{\g,\l\}} p^\alpha S^\alpha_m(p_c) - U_m(p_c) \quad \mbox{ and } \quad p^E_f = \sum_{\alpha\in \{\g,\l\}} p^\alpha S^\alpha_f(p_c) - U_f(p_c).
$$
  We consider the following gradient scheme for \eqref{VF_mechanics}: Find $\bu\in X_{\D_\bu}^0$ such that, for all $\bv\in X_{\D_\bu}^0$,
  \begin{equation}
    \label{GD_mechanics}
  \dsp \int_\Omega \( \bbsig_{\D_u}(\bu) : \bbeps_{\D_\bu}(\bv)  
    - b~(\Pi_{\D_p}^m p_m^E)  \div_{\D_\bu}(\bv)\)  \d\x + 
     \int_\Gamma (\Pi_{\D_p}^f p_f^E)  \jump{\bv}_{\D_\bu} \d\sigma(\x) 
    = \int_\Omega \mathbf{f} \cdot \Pi_{\D_\bu} \bv ~\d\x.
  \end{equation}
The Lax-Milgram theorem ensures that the exact solution $\bar\bu$ and approximate solution $\bu$ exist and are unique. The following proposition provides an error estimate. 

\begin{proposition}\label{error_estimate_mechanics}
  Let $\bar\bu \in \U_0$ be the solution of   \eqref{VF_mechanics} and $\bu\in X_{\D_\bu}^0$ the solution
  of the gradient scheme \eqref{GD_mechanics}. Then, there exists a hidden constant depending only on the coercivity constant $C_{\D_\bu}$ and on the physical data such that the following error estimate holds 
  \begin{equation}
    \label{error_estimate_u}
    \begin{array}{ll}
      & \|\bbeps_{\D_\bu}(\bu)  - \bbeps(\bar \bu)\|_{L^2(\Omega,\S_{d}(\R))}
      + \|\Pi_{\D_\bu} \bu  -\bar \bu\|_{L^2(\Omega)} + \|\jump{\bu}_{\D_\bu}  - \jump{\bar \bu}\|_{L^2(\Gamma)} \\[2ex]
      & \lsim {\cal S}_{\D_\bu}(\bar \bu) + {\cal W}_{\D_\bu}(\bbsig(\bar\bu) -b \, \bar p^E_m \mathbb I) + \|\bar p^E_m - \Pi_{\D_p}^m p^E_m \|_{L^2(\Omega)}
      + \|\bar p^E_f - \Pi_{\D_p}^f p^E_f \|_{L^2(\Gamma)}. 
      \end{array}
    \end{equation}
As a consequence, if $(\D_\bu^l)_{l\in\N}$ is a sequence of coercive, consistent and limit-conforming GDs, if $\bu^l$ is the solution of \eqref{GD_mechanics} for $\D_\bu=\D_\bu^l$, if $(\D_p^l)_{l\in\N}$ is a sequence of GDs and $p^{\alpha,l}\in X_{\D_p^l}^0$, $l\in\N$, are such that $\Pi_{\D_p^l}^mp^{E,l}_m\to \bar p^E_m$ in $L^2(\Omega)$ and $\Pi_{\D_p^l}^fp^{E,l}_f\to \bar p^E_f$ in $L^2(\Gamma)$, then, as $l\to+\infty$,
\begin{equation}\label{eq:convergence_u}
\begin{aligned}
\bbeps_{\D_\bu^l}(\bu^l) \to{}&\bbeps(\bar \bu)&&\mbox{ in $L^2(\Omega,\S_d(\R))$},\\
\Pi_{\D_\bu^l} \bu^l \to{}&\bar \bu&&\mbox{ in $L^2(\Omega)^d$},\\
\jump{\bu^l}_{\D_\bu^l}\to{}&\jump{\bar \bu}&&\mbox{ in $L^2(\Gamma)$}.
\end{aligned}
\end{equation}
\end{proposition}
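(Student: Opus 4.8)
The plan is to prove \eqref{error_estimate_u} by a Third-Strang-Lemma argument, comparing $\bu$ not directly to $\bar\bu$ but to a test-function interpolant. First I would introduce $P_{\D_\bu}\bar\bu\in X^0_{\D_\bu}$, the minimiser realising $\mathcal S_{\D_\bu}(\bar\bu)$ in \eqref{eq:def.SDu}, and set $\mathbf w=\bu-P_{\D_\bu}\bar\bu$. By the triangle inequality, the three error terms on the left of \eqref{error_estimate_u} are bounded by $\mathcal S_{\D_\bu}(\bar\bu)$ (the interpolation error) plus the analogous quantities for $\mathbf w$; and since, by the coercivity \eqref{def_CDmeca}, $\|\Pi_{\D_\bu}\mathbf w\|_{L^2}+\|\jump{\mathbf w}_{\D_\bu}\|_{L^2}\lesssim\|\mathbf w\|_{\D_\bu}=\|\bbeps_{\D_\bu}(\mathbf w)\|_{L^2}$, everything reduces to estimating the single quantity $\|\bbeps_{\D_\bu}(\mathbf w)\|_{L^2}$.

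The core step is to test. Using $\bbsig_{\D_\bu}(\mathbf w):\bbeps_{\D_\bu}(\mathbf w)=2\mu|\bbeps_{\D_\bu}(\mathbf w)|^2+\lambda|\div_{\D_\bu}(\mathbf w)|^2\ge 2\mu|\bbeps_{\D_\bu}(\mathbf w)|^2$, I would lower-bound $\|\bbeps_{\D_\bu}(\mathbf w)\|_{L^2}^2$ by $\tfrac1{2\mu}\int_\Omega\bbsig_{\D_\bu}(\mathbf w):\bbeps_{\D_\bu}(\mathbf w)\,\d\x$ and split this as $\int_\Omega\bbsig_{\D_\bu}(\bu):\bbeps_{\D_\bu}(\mathbf w)-\int_\Omega\bbsig_{\D_\bu}(P_{\D_\bu}\bar\bu):\bbeps_{\D_\bu}(\mathbf w)$. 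The first integral is rewritten with the gradient scheme \eqref{GD_mechanics} tested against $\mathbf w$. In the second I would replace $\bbsig_{\D_\bu}(P_{\D_\bu}\bar\bu)$ by $\bbsig(\bar\bu)$ up to a remainder controlled by $\|\bbeps_{\D_\bu}(P_{\D_\bu}\bar\bu)-\bbeps(\bar\bu)\|_{L^2}\le\mathcal S_{\D_\bu}(\bar\bu)$, and then bring in the continuous problem through the limit-conformity functional applied to the tensor $\bbtau=\bbsig(\bar\bu)-b\,\bar p^E_m\mathbb I$. The decisive observation is that, from \eqref{VF_mechanics} and \eqref{closure_laws}, $\bbtau$ satisfies $\div\bbtau=-\mathbf f$ in $\Omega\setminus\overline\Gamma$, $(\bbtau\n^+)\cdot\n^+=-\bar p^E_f$ and $\bbtau^+\n^++\bbtau^-\n^-=\mathbf 0$ on $\Gamma$; hence the corresponding defect is bounded by $\mathcal W_{\D_\bu}(\bbtau)\,\|\mathbf w\|_{\D_\bu}$ and the substitutions $\div\bbtau=-\mathbf f$, $(\bbtau\n^+)\cdot\n^+=-\bar p^E_f$ are legitimate.

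Collecting the two expansions, the source integrals $\int_\Omega\mathbf f\cdot\Pi_{\D_\bu}\mathbf w$ cancel exactly, leaving
\begin{equation*}
\int_\Omega\bbsig_{\D_\bu}(\mathbf w):\bbeps_{\D_\bu}(\mathbf w)\,\d\x = b\int_\Omega(\Pi^m_{\D_p}p^E_m-\bar p^E_m)\,\div_{\D_\bu}(\mathbf w)\,\d\x - \int_\Gamma(\Pi^f_{\D_p}p^E_f-\bar p^E_f)\jump{\mathbf w}_{\D_\bu}\,\d\sigma(\x) + R,
\end{equation*}
with $|R|\lesssim\big(\mathcal S_{\D_\bu}(\bar\bu)+\mathcal W_{\D_\bu}(\bbtau)\big)\|\mathbf w\|_{\D_\bu}$. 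Bounding the two integrals by Cauchy--Schwarz, using $\|\div_{\D_\bu}(\mathbf w)\|_{L^2}\le\sqrt d\,\|\bbeps_{\D_\bu}(\mathbf w)\|_{L^2}$ and the coercivity bound on $\|\jump{\mathbf w}_{\D_\bu}\|_{L^2}$ (from \eqref{def_CDmeca}, via $L^4\hookrightarrow L^2$ on the bounded set $\Gamma$), and dividing by $\|\mathbf w\|_{\D_\bu}$, I obtain the control of $\|\mathbf w\|_{\D_\bu}$ by the right-hand side of \eqref{error_estimate_u}; combined with the interpolation step of the first paragraph, this yields \eqref{error_estimate_u}.

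For the convergence \eqref{eq:convergence_u}, I would feed the sequence $(\D_\bu^l)_l$ into \eqref{error_estimate_u}: consistency gives $\mathcal S_{\D_\bu^l}(\bar\bu)\to0$, the two equivalent-pressure terms vanish by assumption, and it remains to show $\mathcal W_{\D_\bu^l}(\bbtau)\to0$. This is where I expect the main obstacle, since $\bbtau=\bbsig(\bar\bu)-b\,\bar p^E_m\mathbb I$ need not belong to $C^\infty_\Gamma(\Omega\setminus\overline\Gamma,\S_d(\R))$, so the stated limit-conformity does not apply directly. I would resolve this by density: Cauchy--Schwarz and coercivity give the uniform bound
\begin{equation*}
\mathcal W_{\D_\bu^l}(\bbtau)\le\|\bbtau\|_{L^2(\Omega)}+\overline C_{\bu}\big(\|\div\bbtau\|_{L^2(\Omega)}+\|(\bbtau\n^+)\cdot\n^+\|_{L^2(\Gamma)}\big),
\end{equation*}
so, approximating $\bbtau$ in this norm by smooth tensors of $C^\infty_\Gamma$ (which is licit because $\bbtau$ has $L^2$ divergence and $L^2(\Gamma)$ normal--normal trace and satisfies the structural jump conditions verified above), the smooth part has vanishing defect by limit-conformity while the remainder is uniformly small; hence $\mathcal W_{\D_\bu^l}(\bbtau)\to0$. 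The three strong convergences in \eqref{eq:convergence_u} then follow from \eqref{error_estimate_u} and the triangle inequality with the interpolation error.
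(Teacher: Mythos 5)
Your proof is correct and follows essentially the same route as the paper's: the same interpolant $P_{\D_\bu}\bar\bu$ tested against the error, the same key tensor $\bbsig(\bar\bu)-b\,\bar p^E_m\mathbb I$ with $\div\bbtau=-\mathbf f$ and $(\bbtau\n^+)\cdot\n^+=-\bar p^E_f$ fed into the limit-conformity functional extended to $H_{\rm div,\Gamma}$, and the same density argument (with uniform continuity of $\mathcal W_{\D_\bu^l}$ in the $H_{\rm div,\Gamma}$-type norm) for the convergence part. The only difference is expository: you spell out the elastic coercivity and the cancellation of the source terms that the paper compresses into a single inequality, and you make explicit the uniform bound on $\mathcal W_{\D_\bu^l}$ that the paper delegates to \cite[Lemma 2.17]{gdm}.
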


\begin{proof}
  We note that even though ${\cal W}_{\D_\bu}$ was considered, in the definition of limit-conformity of a sequence of GDs, only on $C_\Gamma^\infty(\Omega\setminus\overline\Gamma,\S_{d}(\R))$, it can be defined on
  \begin{align*}
    H_{\rm div, \Gamma}(\Omega\setminus\overline\Gamma;\S_d(\R)) & \coloneq \left\{\bbsig\in L^2(\Omega;\S_d(\R))\,:\,\div(\bbsig)|_{\Omega^\beta}\in L^2(\Omega^\beta)^d, \beta\in \Xi, \right.\\
   & \qquad \left.\bbsig^+ \n^+ + \bbsig^- \n^- = {\mathbf 0} \mbox{ on } \Gamma, \,(\bbsig^+ \n^+) {\times} \n^+ = {\mathbf 0} \mbox{ on } \Gamma\right\}, 
  \end{align*}
  where $(\Omega^\beta)_{\beta\in \Xi}$ are the connected components of $\Omega\setminus\overline\Gamma$.
  Setting $\bbsig = \bbsig(\bar\bu) - b \,\bar p^E_m \mathbb I\in H_{\rm div, \Gamma}(\Omega\setminus\overline\Gamma; \S_d(\R)) $ as an argument of ${\cal W}_{\D_\bu}$ and using
$\div \bbsig = -\mathbf{f}$, we obtain that for all $\bv\in X^0_{\D_\bu}$
$$
\begin{array}{ll}
  & \dsp \left | \int_\Omega \( ( \bbsig(\bar\bu) - \bbsig_{\D_\bu}(\bu) ): \bbeps_{\D_\bu}(\bv)-b (\bar p^E_m - \Pi_{\D_p}^m p^E_m) \div_{\D_\bu}(\bv)) \) \d\x 
   \dsp + \int_\Gamma (\bar p^E_f - \Pi_{\D_p}^f p^E_f) \jump{\bv}_{\D_\bu} \d\sigma(\x) \right | \\ [2ex]
  & \dsp \qquad  \leq \|\bv\|_{\D_\bu} {\cal W}_{\D_\bu}( \bbsig(\bar\bu) -b \,\bar p^E_m \mathbb I ). 
\end{array}  
$$
Setting $\bv = P_{\D_\bu} \bar\bu - \bu$, where $P_{\D_\bu}\bar\bu$ realizes the minimum in $\mathcal S_{\D_\bu}(\bar\bu)$, we infer
$$
 \| P_{\D_\bu} \bar\bu - \bu \|_{\D_\bu}
  \lsim {\cal S}_{\D_\bu}(\bar \bu) + {\cal W}_{\D_\bu}(\bbsig(\bar\bu) -b \,\bar p^E_m \mathbb I) + \|\bar p^E_m - \Pi_{\D_p}^m p^E_m \|_{L^2(\Omega)}
      + \|\bar p^E_f - \Pi_{\D_p}^f p^E_f \|_{L^2(\Gamma)}.
$$
Combined with the definition of $C_{\D_\bu}$, the estimates above establish \eqref{error_estimate_u}.

Under the assumptions in the second part of the proposition, the hidden constant in \eqref{error_estimate_u} is independent of $l$, the last two terms in the left-hand side of this estimate converge to $0$ as $l\to+\infty$, as well as  $S_{\D_\bu^l}(\bu)$ by definition of the consistency of the sequence of GDs. When its argument $\bbsig$ is in the vector space $C_\Gamma^\infty(\Omega\setminus\overline\Gamma,\S_{d}(\R))$, ${\cal W}_{\D_\bu^l}(\bbsig)$ also converges to $0$ by definition of limit-conformity; since this space is dense in $H_{\rm div, \Gamma}(\Omega\setminus\overline\Gamma;\S_d(\R))$,
the arguments in \cite[Lemma 2.17]{gdm} show that this convergence also holds for the argument $\bbsig=\bbsig(\bar\bu) -b \,\bar p^E_m \mathbb I$.
Estimate \eqref{error_estimate_u} therefore yields the convergences \eqref{eq:convergence_u}.
\end{proof}
\begin{acknowledgements}
We are grateful to Andra and to the Australian Research Council's Discovery Projects (project DP170100605) funding scheme for partially supporting this work.
\end{acknowledgements}

%
%
\bibliographystyle{plain}
\bibliography{GDM_Paper}

\begin{thebibliography}{10}

\bibitem{AF03}
R.A. Adams and J.F. Fournier.
\newblock {\em Sobolev spaces}, volume 140 of {\em Pure and Applied
  Mathematics}.
\newblock Elsevier/Academic Press, Amsterdam, second edition, 2003.

\bibitem{gem.aghili}
J.~Aghili, K.~Brenner, J.~Hennicker, R.~Masson, and L.~Trenty.
\newblock Two-phase discrete fracture matrix models with linear and nonlinear
  transmission conditions.
\newblock {\em GEM -- International Journal on Geomathematics}, 10, 2019.

\bibitem{AELHP153D}
R.~Ahmed, M.G. Edwards, S.~Lamine, B.A.H. Huisman, and M.~Pal.
\newblock {Three-dimensional control-volume distributed multi-point flux
  approximation coupled with a lower-dimensional surface fracture model}.
\newblock {\em Journal of Computational Physics}, 303:470--497, dec 2015.

\bibitem{MAE02}
C.~Alboin, J.~Jaffre, J.~Roberts, and Serres C.
\newblock Modeling fractures as interfaces for flow and transport in porous
  media.
\newblock {\em Fluid flow and transport in porous media}, 295:13--24, 2002.

\bibitem{brenner.hilhorst2013}
O.~Angelini, K.~Brenner, and D.~Hilhorst.
\newblock A finite volume method on general meshes for a degenerate parabolic
  convection-reaction-diffusion equation.
\newblock {\em Numerische Mathematik}, 123:219--257, 2013.

\bibitem{ABH09}
P.~Angot, F.~Boyer, and F.~Hubert.
\newblock {Asymptotic and numerical modelling of flows in fractured porous
  media}.
\newblock {\em ESAIM: Mathematical Modelling and Numerical Analysis},
  43(2):239--275, mar 2009.

\bibitem{AFSVV16}
P.F. Antonietti, L.~Formaggia, A.~Scotti, M.~Verani, and N.~Verzott.
\newblock {Mimetic finite difference approximation of flows in fractured porous
  media}.
\newblock {\em ESAIM: Mathematical Modelling and Numerical Analysis},
  50:809--832, 2016.

\bibitem{Ayuso-de-Dios.Lipnikov.ea:16}
B.~Ayuso~de Dios, K.~Lipnikov, and G.~Manzini.
\newblock The nonconforming virtual element method.
\newblock {\em ESAIM: Math. Model Numer. Anal.}, 50(3):879--904, 2016.

\bibitem{beirao-brezzi-marini}
L.~Beir{\~a}o Da~Veiga, F.~Brezzi, and L.D. Marini.
\newblock Virtual elements for linear elasticity problems.
\newblock {\em SIAM Journal on Numerical Analysis}, 51:794--812, 2013.

\bibitem{Runar19}
L.R. Berge, I.~Berre, E.~Keilegavlen, J.M. Nordbotten, and B.~Wohlmuth.
\newblock Finite volume discretization for poroelastic media with fractures
  modeled by contact mechanics.
\newblock {\em Numerical Methods in Engineering}, 121(4):644--663, 2020.

\bibitem{BM2000}
D.~Bevillon and R.~Masson.
\newblock Stability and convergence analysis of partially coupled schemes for
  geomechanical-reservoir simulations.
\newblock European Conference on the Mathematics of Oil Recovery, Eage, 2000.

\bibitem{BMTA03}
I.~I. Bogdanov, V.~V. Mourzenko, J.-F. Thovert, and P.~M. Adler.
\newblock {Two-phase flow through fractured porous media}.
\newblock {\em Physical Review E}, 68(2), aug 2003.

\bibitem{BBDMEcmor20}
F.~Bonaldi, K.~Brenner, J.~Droniou, and R.~Masson.
\newblock Two-phase darcy flows in fractured and deformable porous media,
  convergence analysis and iterative coupling.
\newblock European Conference on the Mathematics of Oil Recovery, Eage, 2020.

\bibitem{bonaldi2020}
F.~Bonaldi, K.~Brenner, J.~Droniou, R.~Masson, A.~Pasteau, and L.~Trenty.
\newblock Gradient discretization of two-phase poro-mechanical models with
  discontinuous pressures at matrix fracture interfaces.
\newblock Preprint \hal{hal-02997396}, 2020.

\bibitem{both2019global}
J.W. Both, I.~Sorin~Pop, and I.~Yotov.
\newblock Global existence of a weak solution to unsaturated poroelasticity.
\newblock Preprint \arxiv{1909.06679}, 2019.

\bibitem{brenner2013finite}
K.~Brenner, C.~Canc{\`e}s, and D.~Hilhorst.
\newblock Finite volume approximation for an immiscible two-phase flow in
  porous media with discontinuous capillary pressure.
\newblock {\em Computational Geosciences}, 17(3):573--597, 2013.

\bibitem{quenjel2020}
K.~Brenner, J.~Droniou, R.~Masson, and E.H. Quenjel.
\newblock Total-velocity-based finite volume discretization of two-phase darcy
  flow in highly heterogeneous media with discontinuous capillary pressure,
  2020.
\newblock Preprint.

\bibitem{BGGLM16}
K.~Brenner, M.~Groza, C.~Guichard, G.~Lebeau, and R.~Masson.
\newblock Gradient discretization of hybrid-dimensional {D}arcy flows in
  fractured porous media.
\newblock {\em Numerische Mathematik}, 134(3):569--609, 2016.

\bibitem{BGGM2015}
K.~Brenner, M.~Groza, C.~Guichard, and R.~Masson.
\newblock {Vertex Approximate Gradient Scheme for Hybrid Dimensional Two-Phase
  Darcy Flows in Fractured Porous Media}.
\newblock {\em ESAIM: Mathematical Modelling and Numerical Analysis},
  49(2):303--330, 2015.

\bibitem{BHMS2016}
K.~Brenner, J.~Hennicker, R.~Masson, and P.~Samier.
\newblock {Gradient Discretization of Hybrid Dimensional Darcy Flows in
  Fractured Porous Media with discontinuous pressure at matrix fracture
  interfaces}.
\newblock {\em IMA Journal of Numerical Analysis}, 37:1551--1585, 2017.

\bibitem{BHMS2018}
K.~Brenner, J.~Hennicker, R.~Masson, and P.~Samier.
\newblock Hybrid dimensional modelling of two-phase flow through fractured with
  enhanced matrix fracture transmission conditions.
\newblock {\em Journal of Computational Physics}, 357:100--124, 2018.

\bibitem{coussy}
O.~Coussy.
\newblock {\em Poromechanics}.
\newblock John Wiley \& Sons, 2004.

\bibitem{daim.et.al}
F.~Da{\"\i}m, R.~Eymard, D.~Hilhorst, M.~Mainguy, and R.~Masson.
\newblock A preconditioned conjugate gradient based algorithm for coupling
  geomechanical-reservoir simulations.
\newblock {\em Oil \& Gas Science and Technology -- Rev. IFP}, 57:515--523,
  2002.

\bibitem{dipietro-ern}
D.~Di~Pietro and A.~Ern.
\newblock A hybrid high-order locking-free method for linear elasticity on
  general meshes.
\newblock {\em Computer Methods in Applied Mechanics and Engineering},
  283:1--21, 2015.

\bibitem{dipietro-lemaire}
D.~Di~Pietro and S.~Lemaire.
\newblock {An extension of the Crouzeix-Raviart space to general meshes with
  application to quasi-incompressible linear elasticity and Stokes flow}.
\newblock {\em Mathematics of Computation}, 84:1--31, 2015.

\bibitem{Di-Pietro.Droniou.ea:18}
D.~A. Di~Pietro, J.~Droniou, and G.~Manzini.
\newblock {Discontinuous Skeletal Gradient Discretisation} methods on polytopal
  meshes.
\newblock {\em J. Comput. Phys.}, 355:397--425, 2018.

\bibitem{hho-book}
Daniele~Antonio Di~Pietro and J\'er\^ome Droniou.
\newblock {\em The Hybrid High-Order Method for Polytopal Meshes: Design,
  Analysis, and Applications}, volume~19 of {\em Modeling, Simulation and
  Applications}.
\newblock Springer International Publishing, 2020.

\bibitem{DE15}
J.~Droniou and R.~Eymard.
\newblock Uniform-in-time convergence of numerical methods for non-linear
  degenerate parabolic equations.
\newblock {\em Numerische Mathematik}, 132(4):721--766, 2016.

\bibitem{droniou.eymard2016}
J.~Droniou and R.~Eymard.
\newblock Uniform-in-time convergence of numerical methods for non-linear
  degenerate parabolic equations.
\newblock {\em Numerische Mathematik}, 132:721--766, 2016.

\bibitem{gdm}
J.~Droniou, R.~Eymard, T.~Gallou\"et, C.~Guichard, and R.~Herbin.
\newblock {\em The {G}radient {D}iscretisation {M}ethod}, volume~82 of {\em
  Mathematics \& Applications}.
\newblock Springer, 2018.

\bibitem{DHM16}
J.~Droniou, J.~Hennicker, and R.~Masson.
\newblock Numerical analysis of a two-phase flow discrete fracture model.
\newblock {\em Numerische Mathematik}, 141(1):21--62, 2019.

\bibitem{DL14}
J{\'e}r{\^o}me Droniou and Bishnu~P. Lamichhane.
\newblock Gradient schemes for linear and non-linear elasticity equations.
\newblock {\em Numer. {M}ath.}, 129(2):251--277, 2015.

\bibitem{ma48}
I.S. Duff and J.K. Reid.
\newblock The design of {MA48}: a code for the direct solution of sparse
  unsymmetric linear systems of equations.
\newblock {\em ACM Transactions on Mathematical Software}, 22:187--226, 1996.

\bibitem{finite.vol}
R.~Eymard, T.~Gallou\"et, and R.~Herbin.
\newblock Finite volume methods.
\newblock In P.G. Ciarlet and J.L. Lions, editors, {\em Techniques of
  Scientific Computing, Part III}, Handbook of Numerical Analysis, VII, pages
  713--1020. North-Holland, Amsterdam, 2000.

\bibitem{gdm-dg}
R.~Eymard and C.~Guichard.
\newblock {Discontinuous Galerkin gradient discretisations for the
  approximation of second-order differential operators in divergence form}.
\newblock {\em Computational and Applied Mathematics}, 37:4023--4054, 2018.

\bibitem{EGHM13}
R.~Eymard, C.~Guichard, R.~Herbin, and R.~Masson.
\newblock Gradient schemes for two-phase flow in heterogeneous porous media and
  {R}ichards equation.
\newblock {\em ZAMM Z. Angew. Math. Mech.}, 94(7-8):560--585, 2014.

\bibitem{FNFM03}
E.~Flauraud, F.~Nataf, I.~Faille, and R.~Masson.
\newblock Domain decomposition for an asymptotic geological fault modeling.
\newblock {\em Comptes Rendus \`a l'acad\'emie des Sciences, M\'ecanique},
  331:849--855, 2003.

\bibitem{GKT16}
T.T. Garipov, M.~Karimi-Fard, and H.A. Tchelepi.
\newblock Discrete fracture model for coupled flow and geomechanics.
\newblock {\em Computational Geosciences}, pages 149--160, 2016.

\bibitem{GFSZ17}
B.~Giovanardi, L.~Formaggia, A.~Scotti, and P.~Zunino.
\newblock Unfitted {FEM} for modelling the interaction of multiple fractures in
  a poroelastic medium.
\newblock In E.~Burman, M.G. Larson, and M.A. Olshanskii, editors, {\em
  Geometrically Unfitted Finite Element Methods and Applications}, pages
  331--352. SPA Bordas, Cham, Switzerland: Springer International Publishing,
  2017.

\bibitem{GKW16}
V.~Girault, K.~Kumar, and M.F. Wheeler.
\newblock Convergence of iterative coupling of geomechanics with flow in a
  fractured poroelastic medium.
\newblock {\em Computational Geosciences}, 20:997--1011, 2016.

\bibitem{GWGM2015}
V.~Girault, M.F. Wheeler, B.~Ganis, and M.E. Mear.
\newblock A lubrication fracture model in a poro-elastic medium.
\newblock {\em Mathematical Models and Methods in Applied Sciences},
  25:587--645, 2015.

\bibitem{hanowski2018}
K.K. Hanowski and O.~Sander.
\newblock {The hydromechanical equilibrium state of poroelastic media with a
  static fracture: A dimension-reduced model with existence results in weighted
  Sobolev spaces and simulations with an {XFEM} discretization}.
\newblock {\em Mathematical Models and Methods in Applied Sciences},
  28(13):2511--2556, 2018.

\bibitem{hansbo-larson}
P.~Hansbo and M.G. Larson.
\newblock Discontinuous {G}alerkin and the {C}rouzeix--{R}aviart element:
  Application to elasticity.
\newblock {\em ESAIM: Mathematical Modelling and Numerical Analysis},
  37:63--72, 2003.

\bibitem{Jaffre11}
J.~Jaffr{\'{e}}, M.~Mnejja, and J.E. Roberts.
\newblock {A discrete fracture model for two-phase flow with matrix-fracture
  interaction}.
\newblock {\em Procedia Computer Science}, 4:967--973, 2011.

\bibitem{jeannin.et.al}
L.~Jeannin, M.~Mainguy, R.~Masson, and S.~Vidal-Gilbert.
\newblock Accelerating the convergence of coupled geomechanical-reservoir
  simulations.
\newblock {\em International Journal For Numerical And Analytical Methods In
  Geomechanics}, 31:1163--1181, 2007.

\bibitem{JJ14}
B.~Jha and R.~Juanes.
\newblock {Coupled Modeling of Multiphase Flow and Fault Poromechanics during
  geologic CO$_2$ storage}.
\newblock {\em Energy Procedia}, 63:3313--3329, 2014.

\bibitem{JZ17}
L.~Jin and M.D. Zoback.
\newblock Fully coupled nonlinear fluid flow and poroelasticity in arbitrarily
  fractured porous media: A hybrid- dimensional computational model.
\newblock {\em Journal of Geophysical Research: Solid Earth}, 22:7626--7658,
  2017.

\bibitem{KDA04}
M.~Karimi-Fard, L.J. Durlofsky, and K.~Aziz.
\newblock {An efficient discrete-fracture model applicable for general-purpose
  reservoir simulators}.
\newblock {\em SPE Journal}, 9(2):227--236, 2004.

\bibitem{MK2013}
A.R. Khoei, N.~Hosseini, and T.~Mohammadnejad.
\newblock Numerical modeling of two-phase fluid flow in deformable fractured
  porous media using the extended finite element method and an equivalent
  continuum model.
\newblock {\em Advances in Water Resources}, 94:510--528, 2016.

\bibitem{KTJ2013}
J.~Kim, H.~A. Tchelepi, and R.~Juanes.
\newblock Rigorous coupling of geomechanics and multiphase flow with strong
  capillarity.
\newblock {\em Society of Petroleum Engineers}, 2013.

\bibitem{KTJ11}
J.~Kim, H.A. Tchelepi, and R.~Juanes.
\newblock Stability and convergence of sequential methods for coupled flow and
  geomechanics: Fixed-stress and fixed-strain splits.
\newblock {\em Computer Methods in Applied Mechanics and Engineering},
  200:1591--1606, 2011.

\bibitem{MJE05}
V.~Martin, J.~Jaffr\'e, and J.~E. Roberts.
\newblock Modeling fractures and barriers as interfaces for flow in porous
  media.
\newblock {\em SIAM Journal on Scientific Computing}, 26:1667--1691, 2005.

\bibitem{wheeler.mikelic}
A.~Mikeli\'c and M.F. Wheeler.
\newblock Convergence of iterative coupling for coupled flow and geomechanics.
\newblock {\em Computational Geosciences}, 17:455--461, 2013.

\bibitem{MF07}
J.E.P. Monteagudo and A.~Firoozabadi.
\newblock {Control-volume model for simulation of water injection in fractured
  media: incorporating matrix heterogeneity and reservoir wettability effects}.
\newblock {\em SPE Journal}, 12(3):355--366, 2007.

\bibitem{NBFK2019}
J.M. Nordbotten, W.M Boon, A.~Fumagalli, and E.~Keilegavlen.
\newblock Unified approach to discretization of flow in fractured porous media.
\newblock {\em Computational Geosciences}, 23:225--237, 2019.

\bibitem{NL08}
M.~Nuth and L.~Laloui.
\newblock Effective stress concept in unsaturated soils: Clarification and
  validation of a unified framework.
\newblock {\em International Journal for Numerical and Analytical Methods in
  Geomechanics}, 32:771--801, 2008.

\bibitem{RJBH06}
V.~Reichenberger, H.~Jakobs, P.~Bastian, and R.~Helmig.
\newblock {A mixed-dimensional finite volume method for two-phase flow in
  fractured porous media}.
\newblock {\em Advances in Water Resources}, 29(7):1020--1036, jul 2006.

\bibitem{SBN12}
T.H. Sandve, I.~Berre, and J.M. Nordbotten.
\newblock An efficient multi-point flux approximation method for discrete
  fracture-matrix simulations.
\newblock {\em Journal of Computational Physics}, 231:3784--3800, 2012.

\bibitem{dual-porosity}
Z.~Shu.
\newblock {\em A dual-porosity model for two-phase flow in deforming porous
  media}.
\newblock PhD thesis, The University of Oklahoma, School of Petroleum and
  Geological Engineering, 1999.

\bibitem{TFGCH12}
X.~Tunc, I.~Faille, T.~Gallou\"{e}t, M.C. Cacas, and P.~Hav\'e.
\newblock A model for conductive faults with non matching grids.
\newblock {\em Computational Geosciences}, 16:277--296, 2012.

\bibitem{UKBN2018}
E.~Ucar, E.~Keilegavlen, I.~Berre, and J.M. Nordbotten.
\newblock A finite-volume discretization for deformation of fractured media.
\newblock {\em Computational Geosciences}, 22:993--1007, 2018.

\end{thebibliography}
\end{document}